\newcommand{\R}{\mathbb{R}}
\newcommand{\T}{\mathcal{T}}
\newcommand{\mc}[1]{\mathcal{#1}}
\newcommand{\gmin}{\gamma_{\operatorname{min}}}
\newcommand{\gmax}{\gamma_{\operatorname{max}}}
\newcommand{\uorth}{V_{u}^{\perp}}
\newcommand{\Vf}{V^{\operatorname*{f}}_{H,h}}
\newcommand{\Vfzero}{V^{\operatorname*{f}}_{H,0}}
\newcommand{\f}{{\operatorname*{f}}}
\newcommand{\cs}{{\operatorname*{c}}}
\newcommand{\Pf}{P^{\operatorname*{f}}}
\newcommand{\Pc}{P^{\operatorname*{c}}}
\newcommand{\Vc}{V^{\operatorname*{c}}_{H,h}}
\newcommand{\Vczero}{V^{\operatorname*{c}}_{H,0}}
\newcommand{\support}{\operatorname*{supp}}
\newcommand{\dimension}{\operatorname*{dim}}
\newcommand{\dia}{\operatorname*{diam}}
\newcommand{\ddiv}{\operatorname*{div}}
\newcommand{\tnorm}[1]{\|#1\|_{H^1(\Omega)}}
\newcommand{\dx}{\hspace{2pt}dx}
\newtheorem{theorem}{Theorem}[section]
\newtheorem{corollary}[theorem]{Corollary}
\newtheorem{lemma}[theorem]{Lemma}
\newtheorem{proposition}[theorem]{Proposition}
\newtheorem{problem}[theorem]{Problem}
\theoremstyle{definition}
\newtheorem{remark}[theorem]{Remark}
\begin{document}

\begin{center}
{\LARGE Two-Level discretization techniques for ground state computations of Bose-Einstein condensates}\\[2em]
\end{center}
\renewcommand{\thefootnote}{\fnsymbol{footnote}}
\renewcommand{\thefootnote}{\arabic{footnote}}

\begin{center}
{\large Patrick Henning\footnote[1]{ANMC, Section de Math\'{e}matiques, \'{E}cole polytechnique f\'{e}d\'{e}rale de Lausanne, 1015 Lausanne, Switzerland}$\hspace{0pt}^{\hspace{1pt}\ast}$,
 Axel M\r{a}lqvist\footnote[2]{Department of Mathematical Sciences, Chalmers University of Technology and University of Gothenburg, SE-41296 Gothenburg, Sweden}\renewcommand{\thefootnote}{\fnsymbol{footnote}}\setcounter{footnote}{0}
 \hspace{-3pt}\footnote{P. Henning and A. M\r{a}lqvist were supported by The G\"{o}ran Gustafsson Foundation and The Swedish Research Council.},
 \renewcommand{\thefootnote}{\arabic{footnote}}\setcounter{footnote}{2}
 Daniel Peterseim\footnote[3]{Institut f\"ur Numerische Simulation der Universit\"at Bonn, Wegelerstr. 66, 53123 Bonn, Germany}\renewcommand{\thefootnote}{\fnsymbol{footnote}}\setcounter{footnote}{1}
 \hspace{-3pt}\footnote{D. Peterseim was partly supported by the Humboldt-Universit\"at and the DFG Research Center Matheon Berlin.}}\\[2em]
\end{center}

\begin{center}
{\large{\today}}
\end{center}

\begin{center}
\end{center}

\begin{abstract}
This work presents a new methodology for computing ground states of Bose-Einstein condensates based on finite element discretizations on two different scales of numerical resolution. In a pre-processing step, a low-dimensional (coarse) generalized finite element space is constructed. It is based on a local orthogonal decomposition of the solution space and exhibits high approximation properties. The non-linear eigenvalue problem that characterizes the ground state is solved by some suitable iterative solver exclusively in this low-dimensional space, without significant loss of accuracy when compared with the solution of the full fine scale problem. The pre-processing step is independent of the types and numbers of bosons. A post-processing step further improves the accuracy of the method. We present rigorous a priori error estimates that predict convergence rates $H^3$ for the ground state eigenfunction and $H^4$ for the corresponding eigenvalue without pre-asymptotic effects; $H$ being the coarse scale discretization parameter. 
Numerical experiments indicate that these high rates may still be pessimistic. 
\end{abstract}

\paragraph*{Keywords}
eigenvalue, finite element, Gross-Pitaevskii equation, numerical upscaling, two-grid method, multiscale method

\paragraph*{AMS subject classifications}
35Q55, 65N15, 65N25, 65N30, 81Q05

\section{Introduction}
\label{section-introduction}

Bose-Einstein condensates (BEC) are formed when a dilute gas of trapped bosons (of the same species) is cooled down to ultra-low temperatures close to absolute zero \cite{Bos1,RevModPhys.71.463,Einstein-24,
Pitaevskii:Stringari:2003}. In this case, nearly all bosons are in the same quantum mechanical state, which means that they loose their identity and become indistinguishable from each other. The BEC therefore behaves like one 'super particle' where the quantum state can be described by a single collective wave function $\Psi$. The dynamics of a BEC can be modeled by the time-dependent Gross-Pitaevskii equation (GPE) \cite{Gross:343403,Lieb:Seiringer:Yngvason:2000,Pitaevskii1961}, which is a nonlinear Schr\"odinger equation given by
\begin{align}
\label{time-dependent-gpe}i \hbar \hspace{2pt} \partial_t \Psi = -\frac{\hbar^2}{2m} \triangle \Psi + V_e \Psi + \frac{4 \pi \hbar^2 a N}{m} |\Psi|^2 \Psi.
\end{align}
Here, $m$ denotes the atomic mass of a single boson, $N$ the number of bosons (typically in the span between $10^3$ and $10^7$), $\hbar$ is the reduced Plank's constant and $V_e$ is an external trapping potential that confines the system. The nonlinear term in the equation describes the effective two-body interaction between the particles. If the scattering length $a$ is positive, the interaction is repulsive, if it is negative the interaction is attractive. For $a=0$ there is no interaction and \eqref{time-dependent-gpe} becomes the Schr\"odinger equation. The parameter $a$ changes according to the considered species of bosons. We only consider the case $a\ge0$ in this paper.
We are mainly interested in the ground state solution of the problem. This stationary state of the BEC is of practical relevance, e.g., in the context of atom lasers \cite{76421,Ketterle1997,PhysRevA.57.2030}. The ansatz $\Psi(x,t)= \hat{c} e^{-i\lambda \hat{t}} u(\hat{x})$, with the unknown chemical potential of the condensate $\lambda$ and a proper nondimensionalization $(x,t)\mapsto(\hat{x},\hat{t})$, reduces \eqref{time-dependent-gpe} to the time-independent GPE
\begin{align*}
-\frac{1}{2} \triangle u + V u + \beta |u|^2 u = \lambda u \qquad \mbox{with} \enspace \beta=\frac{4 \pi a N}{x_s},
\end{align*}
where $x_s$ denotes the dimensionless length unit and where $V$ denotes the accordingly rescaled potential (see, e.g., \cite{Bao:Tang:2003} for a derivation of the time-independent GPE). The ground state of the BEC is the lowest energy state of the system and is therefore stable. It minimizes the corresponding energy $$ E(v)=\int_{\mathbb{R}^d} \frac{1}{2} |\nabla v|^2 + V |v|^2 + \frac{\beta}{2} |v|^4 \dx$$
amongst all $L^2$-normalized $H^1$ functions. For any $L^2$-normalized minimizer $u$, $\lambda=E(u)+\frac{\beta}{2}\|u\|^4_{L^4(\mathbb{R}^d)}$ is the smallest eigenvalue of the GPE. In this paper, we shall focus on the computation of this ground state eigenvalue. Eigenfunctions whose energies are larger than the minimum energy are called excited states of the BEC and are not stable in general but may satisfy relaxed concepts of stability such as metastability (see \cite{PhysRevLett.82.2228}).
Numerical approaches for the computation of ground states of a BEC typically involve an iterative algorithm that starts with a given initial value and diminishes the energy of the density functional $E$ in each iteration step. Different methodologies are possible: methods related to normalized gradient-flows \cite{Bao:Chern:Lim:2006,Aftalion:Du:2001,Aftalion:Danaila:2003,Aftalion-Danaila-2004,Bao:Chern:Lim:2006,Bao:Shen:2008,Garcia-Ripoll:Perez-Garcia:2001,Bao:Du:2004,Bao:Wang:Markowich:2005,Danaila:Kazemi:2010}, methods based on a direct minimization of the energy functional \cite{Bao:Tang:2003,Caliari:et-al:2009}, explicit imaginary-time marching \cite{Chiofalo:Succi:Tosi:2000}, the DIIS method (direct inversion in the iterated subspace) \cite{PhysRevA.59.2232,Cerimele200082}, or the Optimal Damping Algorithm \cite{Cances:LeBris:2000,Cances:2000}.
We emphasize that, in any case, the dimensionality of the underlying space discretization is the crucial factor for computational complexity because it determines the cost per iteration step. The aim of this paper is to present a low-dimensional space discretization that reduces the cost per step and, hence, speeds up the iterative solution procedure considerably. 
In the literature, there are only a few contributions on rigorous numerical analysis of space discretizations of the GPE. In particular, explicit orders of convergence are widely missing. In \cite{Zhou:2004,Chen:Gong:Zhou:2010}, Zhou and coworkers proved the convergence of general finite dimensional approximations that were obtained by minimizing the energy density $E$ in a finite dimensional subspace of $H^1_0(\Omega)$. This justifies, e.g., the direct minimization approach proposed in \cite{Bao:Tang:2003}. The iteration scheme is not specified and not part of the analysis. The results of Zhou were generalized by Canc\`es, Chakir and Maday \cite{Cances:Chakir:Maday:2010} allowing explicit convergence rates for finite element approximations and Fourier expansions. A-priori error estimates for a conservative Crank-Nicolson finite difference (CNFD) method and a semi-implicit finite difference (SIFD) method were derived by Bao and Cai \cite{Bai:Cai:2013}.

In this work, we propose a new space discretization strategy that involves a pre-processing step and a post-processing step in standard $P1$ finite element spaces. The pre-processing step is based on the numerical upscaling procedure suggested by two of the authors \cite{Malqvist:Peterseim:2012} for linear eigenvalue problems. In this step, a low-dimensional approximation space is assembled. The assembling is based on some local orthogonal decomposition that incorporates problem-specific information. The constructed space exhibits high approximation properties. The non-linear problem is then solved in this low-dimensional space by some standard iterative scheme (e.g., the ODA \cite{Cances:LeBris:2000}) with very low cost per iteration step. 
The post-processing step is based on the two-grid method suggest by Xu and Zhou \cite{Xu:Zhou:2001}. We emphasize that both, pre- and post-processing, involve only the solution of linear elliptic Poisson-type problems using standard finite elements. 
We give a rigorous error analysis for our strategy to show that we can achieve convergence orders of $H^4$ for the computed eigenvalue approximations without any pre-asymptotic effects. We do not focus on the iterative scheme that is used for solving the discrete minimization problem. The various choices previously mentioned, e.g., the ODA \cite{Cances:LeBris:2000} are possible. 
Our new strategy is particularly beneficial in experimental setups with different types of bosons, because the results of the pre-processing step can be reused over and over again independent of $\beta$. Similarly, the data gained by pre-processing can be recycled for the computation of excited states. Other applications include setups with potentials that oscillate at a very high frequency (e.g., to investigate Josephson effects \cite{PhysRevA.57.2030,PhysRevA.57.R28}). Here, normally very fine grids are required to resolve the oscillations, whereas our strategy still yields good approximations in low dimensional spaces and, hence, reduces the costs within the iteration procedure tremendously.

\section{Model problem}\label{section-model}
Consider the dimensionless Gross-Pitaevskii equation in some bounded Lipschitz domain $\Omega\subset\mathbb{R}^d$ where $d=1,2,3$. Since ground state solutions show an extremely fast decay (typically exponential), the restriction to bounded domains and homogeneous Dirichlet condition are physically justified. We seek (in the sense of distributions) the minimal eigenvalue $\lambda$ and corresponding $L^2$-normalized eigenfunction $u\in H^1_0(\Omega)$ with
\begin{equation*}
  \begin{aligned}
    -\ddiv A\nabla u + b u + \beta |u|^2 u&=  \lambda u \quad \text{in }\Omega, \\
    u & = 0 \quad \hspace{8pt}\text{on }\partial\Omega.
  \end{aligned}
\end{equation*}
The underlying data satisfies the following assumptions:
\begin{itemize}
\item[(a)] If $d=1$, the domain $\Omega$ is an interval. If $d=2$ (resp. $d=3$), $\Omega$ has a polygonal (resp. polyhedral) boundary. 
\item[(b)] The diffusion coefficient $A\in L^\infty(\Omega,\mathbb{R}^{d\times d}_{sym})$ is a symmetric matrix-valued function with uniform spectral bounds $\gmax\geq\gmin>0$, 
\begin{equation}\label{e:spectralbound}
\sigma(A(x))\subset [\gmin,\gmax]\quad\text{for almost all }x\in \Omega.
\end{equation}
\item[(c)] $b \in L^2(\Omega)$ is non-negative (almost everywhere). 
\item[(d)] $\beta\in \mathbb{R}$ is non-negative.
\end{itemize}
The weak solution of the GPE minmizes the energy functional $E\hspace{-2pt}: \hspace{-2pt} H^1_0(\Omega) \hspace{0pt} \hspace{-2pt}\rightarrow \hspace{-2pt} \R$ given by
\begin{align*}
E(\phi):= \frac{1}{2} \int_{\Omega} A \nabla \phi \cdot \nabla \phi \dx + \frac{1}{2} \int_{\Omega} b \phi^2 \dx + \frac{1}{4} \int_{\Omega} \beta |\phi|^4 \dx\quad\text{for }\phi \in H^1_0(\Omega).
\end{align*}
\begin{problem}[Weak formulation of the Gross-Pitaevskii equation]
\label{weak-problem}$\\$
Find $u\in H^1_0(\Omega)$ such that $u\ge0$ a.e. in $\Omega$, $\|u\|_{L^2(\Omega)}=1$, and
\begin{align*}
E(u)=\underset{\|v\|_{L^2(\Omega)}=1}{\inf_{v \in H^1_0(\Omega)}} E(v).
\end{align*}
\end{problem}
It is well-known (see, e.g., \cite{Lieb:Seiringer:Yngvason:2000} and \cite{Cances:Chakir:Maday:2010}) that there exists a unique solution $u\in H^1_0(\Omega)$ of Problem~\ref{weak-problem}. This solution $u$ is continuous in $\bar{\Omega}$ and positive in $\Omega$. 
The corresponding eigenvalue $\lambda:=2 E(u) + 2^{-1} \beta \|u\|_{L^4(\Omega)}^4$ of the GPE is real, positive, and simple.
Observe that the eigenpair $(u,\lambda)$ satisfies 
\begin{align*}
\int_{\Omega} A \nabla u \cdot \nabla \phi \dx + \int_{\Omega} b u \phi \dx + \int_{\Omega} \beta |u|^2 u \phi \dx = \lambda \int_{\Omega} u \phi \dx
\end{align*}
for all $\phi \in H^1_0(\Omega)$. Moreover, $\lambda$ is the smallest amongst all possible eigenvalues and satisfies the a priori bound $\lambda < 4E(u)$.

\section{Discretization}
\label{section-discretizations}
This section recalls classical finite element discretizations and presents novel two-grid approaches for the numerical solution of Problem~\ref{weak-problem}. The existence of a minimizer of the functional $E$ in discrete spaces is easily seen. However, uniqueness does not hold in general. We note that unlike claimed in \cite{Zhou:2004} the uniqueness proof given in \cite{Lieb:Seiringer:Yngvason:2000} does not generalize to arbitrary subspaces of the original solution space.

\begin{remark}[Existence of discrete solutions \cite{Cances:Chakir:Maday:2010}]
\label{existence-discrete-solutions}Let $W$ denote a finite dimensional, non-empty subspace of $H^1_{0}(\Omega)$, then there exists a minimizer $u_W\in W$ with $\|u_W\|_{L^2(\Omega)}=1$, $(u_W,1)_{L^2(\Omega)}\ge0$, and
\begin{align*}
E(u_W)=\underset{\|w\|_{L^2(\Omega)}=1}{\inf_{w \in W}} E(w).
\end{align*}
If $(W_i)_{i \in \mathbb{N}}$ represents a dense family of such subspaces, then any sequence of corresponding minimizers $(u_i)_{i \in \mathbb{N}}$ with $(u_i,1)_{L^2(\Omega)}\ge0$ converges to the unique solution $u$ of Problem~\ref{weak-problem}. 
\end{remark}

\subsection{Standard Finite Elements}
\label{subsection-fem-discretization}
We consider two regular simplicial meshes $\T_H$ and $\T_h$ of $\Omega$. The finer mesh $\T_h$ is obtained from  the coarse mesh $\T_H$ by regular mesh refinement. The discretization parameters $h\leq H$ represent the mesh size, i.e., $h_T:=\dia(T)$ (resp. $H_T:=\dia(T)$) for $T \in \T_h$ (resp. $\T_H$) and $h:=\max_{T\in \T_h}\{h_T\}$ (resp. $H:=\max_{T\in \T_H}\{H_T\}$). For $\T=\T_H,\T_h$, let
\begin{equation*}
P_1(\T) = \{v \in L^2(\Omega) \;\vert \;\forall T\in\T,v\vert_T \text{ is a polynomial of total degree}\leq 1\}
\end{equation*}
denote the set of $\T$-piecewise affine functions. Classical $H^1_0(\Omega)$-conforming finite element spaces are then given by
\begin{align*}
V_h:=P_1(\T_h)\cap H^1_0(\Omega) \quad \text{and} \quad V_H:=P_1(\T_H)\cap H^1_0(\Omega) \subset V_h.
\end{align*}
Note that on the fine discretization scale, a different choice of polynomial degree, e.g., piecewise quadratic functions, is possible. This would be a better choice for smooth data that allows for a regular ground state. Our method and its analysis essentially require the inclusion  $H^1_0(\Omega)\supset V_h\supset V_H$. 
The discrete problem on the fine grid $\T_h$ reads as follows.
\begin{problem}[Reference finite element discretization on the fine mesh]
\label{discrete-problem-h}$\\$
Find $u_h\in V_h$ with $(u_h,1)_{L^2(\Omega)}\ge0$, $\|u_h\|_{L^2(\Omega)}=1$ and
\begin{align}
\label{minimizer-in-V-h}E(u_h)=\underset{\|v_h\|_{L^2(\Omega)}=1}{\inf_{v_h \in V_h}} E(v_h).
\end{align}
The corresponding eigenvalue is given by $\lambda_h:=2 E(u_h) + 2^{-1} \beta \| u_h \|_{L^4(\Omega)}^4$. 
\end{problem}

According to Remark~\ref{existence-discrete-solutions}, $u_h$ is not determined uniquely in general. Moreover, $\lambda_h$ is not necessarily the smallest eigenvalue of the corresponding discrete eigenvalue problem. 
In what follows, $u_h$ refers to an arbitrary solution of Problem~\ref{discrete-problem-h}. It will serve as a reference to compare further (cheaper)  numerical approximations with. The accuracy of $u_h$ has been studied in \cite{Cances:Chakir:Maday:2010}. Under the assumption of sufficient regularity, optimal orders of convergence are obtained (cf. \eqref{e:erroruh}).

\subsection{Preprocessing motivated by numerical homogenization}
\label{subsection-two-grid-discretization}
The aim of this paper is to accurately approximate the finescale reference solution $u_h$ of Problem~\ref{discrete-problem-h} within some low-dimensional subspace of $V_h$. For this purpose, we introduce a two-grid upscaling discretization that was initially proposed in \cite{Malqvist:Peterseim:2011} for the treatment of multiscale problems. The framework has been applied to non-linear problems in \cite{HMP12}, to linear eigenvalue problems in \cite{Malqvist:Peterseim:2012} and in the context of Discontinuous Galerkin \cite{EGMP13} and Partition of Unity Methods \cite{HMP13}. This contribution aims to generalize and analyze the methodology to the case of an eigenvalue problem with an additional nonlinearity in the eigenfunction. We emphasize that the co-existence of two difficulties, the nonlinear nature of the eigenproblem itself and the additional nonlinearity in the eigenfunction, requires new essential ideas far beyond simply plugging together existing theories for the isolated difficulties. 

Let $\mathcal{N}_H$ denote the set of interior vertices in $\T_H$. For $z \in \mathcal{N}_H$ we let $\Phi_z \in V_H$ denote the corresponding nodal basis function with $\Phi_z(z)=1$ and $\Phi_z(y)=0$ for all $y\in \mathcal{N}_H \setminus \{ z\}$. We define a weighted Cl\'ement-type interpolation operator (c.f. \cite{MR1736895})
\begin{align}
\label{def-weighted-clement} I_H : H^1_0(\Omega) \rightarrow V_H,\quad v\mapsto I_H(v):= \sum_{z \in \mathcal{N}_H} v_z \Phi_z \quad \text{with }v_z := \frac{(v,\Phi_z)_{L^2(\Omega)}}{(1,\Phi_z)_{L^2(\Omega)}}.
\end{align}

It is easily shown by Friedrichs' inequality and the Sobolev embedding $H^1_0(\Omega) \hookrightarrow L^6(\Omega)$ (for $d\le3$) that 
\begin{align*}
a(v,\phi):=\int_{\Omega} A \nabla v \cdot \nabla \phi \dx + \int_{\Omega} b v \phi \dx\qquad \text{for }v,\phi\in H^1_0(\Omega)
\end{align*}
defines a scalar product in $H^1_0(\Omega)$ and induces a norm $\tnorm{\cdot}:=\sqrt{a(\cdot,\cdot)}$ on $H^1_0(\Omega)$ which is equivalent to the standard $H^1$-norm. 
By means of the interpolation operator $I_H$ defined in \eqref{def-weighted-clement}, we construct an $a$-orthogonal decomposition of the space $V_h$ into a low-dimensional coarse space $\Vc$ (with favorable approximation properties) and a high-dimensional residual space $\Vf$. The residual or 'fine' space is the kernel of the interpolation operator restricted to $V_h$, 
\begin{subequations}\label{e:decomposition}
\end{subequations}
\begin{align}\label{e:decompositiona}
\Vf := \operatorname{kernel}(I_H\vert_{V_h}).\tag{\ref{e:decomposition}.a}
\end{align}
The coarse space is simply defined as the orthogonal complement of $\Vf$ in $V_h$ with respect to $a(\cdot,\cdot)$. 
It is characterized via the $a$-orthogonal projection $\Pf : H^1_0(\Omega) \rightarrow \Vf$ onto the fine space given by
\begin{align*}
a(\Pf v,\phi) = a(v,\phi)\quad\text{for all }\phi \in \Vf.
\end{align*}
By defining $\Pc:=1-\Pf$, the coarse space is given by
\begin{align}\label{e:decompositionb}
\Vc := \Pc V_H.\tag{\ref{e:decomposition}.b}
\end{align}
A basis of $\Vc$ is given by $\left( \Pc \Phi_z\right)_{z\in\mathcal{N}_H}$ with $\dimension\Vc=\operatorname{dim} V_H$. With this definition we obtain the splitting
\begin{align}\label{e:decompositionc}
V_h = \Vc \oplus \Vf.\tag{\ref{e:decomposition}.c}
\end{align}
Some favorable properties of the decomposition, in particular its $L^2$-quasi-orthogonality, are discussed in Section~\ref{subsection-properties-of-the-decomposition}. 
The minimization problem in the low-dimensional space $\Vc$ reads as follows.
\begin{problem}[Pre-processed approximation]
\label{discrete-problem-in-V-c}$\\$
Find $u^\cs_H\in \Vc$ with $(u^\cs_H,1)\ge0$, $\|u^\cs_H\|_{L^2(\Omega)}=1$ and
\begin{align*}
E(u^\cs_H)=\underset{\|v^\cs\|_{L^2(\Omega)}=1}{\inf_{v^\cs \in \Vc}} E(v^\cs).
\end{align*}
The corresponding eigenvalue in $\Vc$ is given by
$\lambda^\cs_H:=2 E(u^\cs_H) + 2^{-1} \beta \| u^\cs_H \|_{L^4(\Omega)}^4$.
\end{problem}
\begin{remark}[Practical aspects of the decomposition] \begin{itemize}\item[a)]The assembly of the corresponding finite element matrices requires only the evaluation of $\Pf\Phi_z$, i.e., the solution to one linear Poisson-type problem per coarse vertex. This can be done in parallel. Section~\ref{ss:sparse} below will show that these linear problems may be restricted to local subdomains centered around the coarse vertices without loss of accuracy. Hence, even in a serial computing setup, the complexity of solving all corrector problems is equivalent (up to factor $|\log(H)|$) to the cost of solving one linear Poisson problem on the fine mesh. 
\item[b)] The pre-processing step is independent of the parameter $\beta$ which characterizes the species of the bosons. Hence, the method becomes considerably cheaper when experiments need to be carried out for different types and numbers of bosons. A similar argument applies to variations on the trapping potential $b$. Provided that this trapping potential is an element of $H^1(\Omega)$ (in practical applications it is usually even harmonic and admits the desired regularity) the bilinear form $a(\cdot,\cdot)$ (and the associated constructions of $\Vf$ and $\Vc$) can be restricted to the second order term {$\int_{\Omega} A \nabla v \cdot \nabla \phi$} without a loss in the expected convergence rates stated in Theorems~\ref{main-result-pre} and~\ref{main-result-post} below. The trapping potential may then be varied without affecting the pre-processed space $\Vc$.
\item[c)] Once the coarse space has been assembled it can also be re-used in computations of larger eigenvalues (i.e., not only in the ground state solution).
\end{itemize}
\end{remark}

\subsection{Sparse approximations of $\Vc$}\label{ss:sparse}
The construction of the coarse space $\Vc$ is based on fine scale equations formulated on the whole domain $\Omega$ which makes them  expensive to compute. However, \cite{Malqvist:Peterseim:2011} shows that $\Pf\Phi_z$ decays exponentially fast away from $z$. We specify this feature as follows. Let $k\in\mathbb{N}$ denote the localization parameter, i.e., a new discretization parameter. We define nodal patches $\omega_{z,k}$ of $k$ coarse grid layers centered around the node $z\in\mathcal{N}_H$ by
\begin{equation}\label{e:omega}
 \begin{aligned}
 \omega_{z,1}&:=\support \Phi_z=\cup\left\{T\in\T_H\;|\;z\in T\right\},\\
 \omega_{z,k}&:=\cup\left\{T\in\T_H\;|\;T\cap \omega_{z,k-1}\neq\emptyset\right\} \quad \mbox{for} \enspace k\geq 2.
\end{aligned}
\end{equation}
There exists $0 < \theta < 1$ depending on the contrast $\gmin/\gmax$ but not on mesh sizes $h,H$ and fast oscillations of $A$ such that for all 
for all vertices $z\in\mathcal{N}_H$ and for all $k\in\mathbb{N}$, it holds
\begin{equation}\label{l:decay}
\| \Pf\Phi_z \|_{H^1(\Omega\setminus\omega_{z,k})}\lesssim \theta^{k}\tnorm{\Pf\Phi_z}.
\end{equation}  
This result motivates the truncation of the computations of the basis functions to local patches $\omega_{z,k}$. We approximate $\Psi_z=\Pf\Phi_z\in\Vf$ from \eqref{e:decompositiona}-\eqref{e:decompositionc} with $\Psi_{z,k}\in\Vf(\omega_{z,k}):=\{v\in\Vf\;\vert\;v\vert_{\Omega\setminus\omega_{x,k}}=0\}$ such that
\begin{equation}\label{e:correctorlocal}
 a(\Psi_{z,k},v)=a(\Phi_z,v)\quad\text{for all }v\in\Vf(\omega_{z,k}).
\end{equation}
This yields a modified coarse space $V^{\cs}_{H,h,k}$ with a local basis
\begin{equation}\label{e:basisVck}
 V^{\cs}_{H,h,k} = \operatorname{span}\{\Phi_z-\Psi_{z,k}\;\vert\;z\in\mathcal{N}_H\}.
\end{equation}
The number of non-zero entries of the corresponding finite element matrices is proportional to $k^d N_H$ (note that we expect $N_H^2$ non-zero entries without the truncation). Due to the exponential decay, the very weak condition $k\approx |\log{H}|$ implies that the perturbation of the ideal method due to this truncation is of higher order and forthcoming error estimates in Theorems~\ref{main-result-pre} and~\ref{main-result-post} remain valid. We refer to \cite{Malqvist:Peterseim:2011} for details and proofs. The modified localization procedure from \cite{HP12} with improved accuracy and stability properties may also be applied.

\subsection{Post-processing}
Although $u^\cs_H$ and $\lambda^\cs_H$ will turn out to be highly accurate approximations of the unknown solution $(u,\lambda)$, the orders of convergence can be improved even further by a simple post-processing step on the fine grid. The post-processing applies the two-grid method originally introduced by Xu and Zhou \cite{Xu:Zhou:2001} for linear elliptic eigenvalue problems to the present equation by using our upscaled coarse space on the coarse level.
\begin{problem}[Post-processed approximation]
\label{two-grid-postprocessing}
Find $u^\cs_h\in V_h$ with
\begin{align*}
\int_{\Omega} A \nabla u^\cs_h \cdot \nabla \phi_h \dx + \int_{\Omega} b u^\cs_h \phi_h \dx = \lambda^\cs_H \int_{\Omega} u^\cs_H \phi_h \dx - \int_{\Omega} \beta |u^\cs_H|^2 u^\cs_H \phi_h \dx
\end{align*}
for all $\phi_h \in V_h$.  Define $\lambda^\cs_h:=(2 E(u^\cs_h) + 2^{-1} \beta \| u^\cs_h \|_{L^4(\Omega)}^4) \|u^\cs_h\|^{-2}_{L^2(\Omega)}$.
\end{problem}
Let us emphasize that this approach is different from \cite{Chien:Huang:Jeng:Li:2008}, where the post-precessing problem has a different structure and where classical finite element spaces are used on both scales. 
\section{A-priori error estimates}
\label{section-main-results}
This section presents the a-priori error estimates for the pre-processed/upscaled approximation with and without the post-processing step. Throughout this section, $u \in H^1_0(\Omega)$ denotes the solution of Problem~\ref{weak-problem}, $u_h \in V_h$ the solution of reference Problem~\ref{discrete-problem-h}, $u_H^\cs \in \Vc$ the solution of Problem~\ref{discrete-problem-in-V-c} and $u_h^\cs$ the post-processed solution of Problem~\ref{two-grid-postprocessing}. 
The notation $f \lesssim g$ abbreviates $f\leq Cg$ with some constant $C$ that may depend on the space dimension $d$, $\Omega$, $\gmin$, $\gmax$, $\|b\|_{L^2(\Omega)}$, $\beta$, $\lambda$ and interior angles of the triangulations, but not on the mesh sizes $H$ and $h$. In particular it is robust against fast oscillations of $A$ and $b$. 

\begin{theorem}[Error estimates for the pre-processed approximation]
\label{main-result-pre}
Assume that $\| u - u_h \|_{H^1(\Omega)}\lesssim 1$. For $u$ and $u_H^\cs$ as above, it holds
\begin{align}
\label{final-H1-estimate}\| u - u_H^\cs \|_{H^1(\Omega)} &\lesssim H^2 + \| u - u_h \|_{H^1(\Omega)}.
\end{align}
For sufficiently small $h$ (in the sense of Canc\`es et al. \cite{Cances:Chakir:Maday:2010}), we also have
\begin{align}
\label{final-L2-estimate}|\lambda- \lambda_H^\cs|+\| u - u_H^\cs \|_{L^2(\Omega)} &\lesssim H^3 + H\hspace{2pt}\| u - u_h \|_{H^1(\Omega)}.
\end{align}
\end{theorem}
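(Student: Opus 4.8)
The plan is to exploit the energy-minimization structure of the ground state together with the special approximation property of the upscaled space $\Vc$ for the energy-norm bound \eqref{final-H1-estimate}, and then to upgrade this to the $L^2$- and eigenvalue estimates \eqref{final-L2-estimate} by an Aubin--Nitsche duality. First I would reduce the $H^1$-error to a best-approximation error. Since $u_H^\cs$ minimizes $E$ over the unit sphere of $\Vc$ and the ground-state eigenvalue is simple, the constrained Hessian of $E$ at $u$ is positive on the tangent space; this furnishes, for $L^2$-normalized $w$ close to $u$ with $(w,u)_{L^2(\Omega)}>0$, the two-sided bound $\tnorm{u-w}^2 \lesssim E(w)-E(u) \lesssim \tnorm{u-w}^2$. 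Inserting the normalized competitor $\tilde u := \Pc u_h / \|\Pc u_h\|_{L^2(\Omega)}\in\Vc$ and using $E(u_H^\cs)\le E(\tilde u)$ then gives $\tnorm{u-u_H^\cs} \lesssim \inf_{v\in\Vc}\tnorm{u-v}$, up to a normalization defect $|1-\|\Pc u_h\|_{L^2(\Omega)}|$ that is of higher order.

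Next I would bound the best-approximation error by choosing $v=\Pc u_h=u_h-\Pf u_h$, so that $\inf_{v\in\Vc}\tnorm{u-v}\le\|u-u_h\|_{H^1(\Omega)}+\tnorm{\Pf u_h}$. The decisive step is the claim $\tnorm{\Pf u_h}\lesssim H^2$. Writing the discrete Euler--Lagrange equation for $u_h$ as $a(u_h,\phi)=(f_h,\phi)_{L^2(\Omega)}$ with residual $f_h:=\lambda_h u_h-\beta|u_h|^2u_h$, and using that $\Pf u_h\in\Vf=\kernel(\Ic\vert_{V_h})$ forces $(\Pf u_h,\Phi_z)_{L^2(\Omega)}=0$ for every coarse node $z$, hence $L^2$-orthogonality to $V_H$, I obtain for any $v_H\in V_H$
$$\tnorm{\Pf u_h}^2=a(u_h,\Pf u_h)=(f_h,\Pf u_h)_{L^2(\Omega)}=(f_h-v_H,\Pf u_h)_{L^2(\Omega)}\le\|f_h-v_H\|_{L^2(\Omega)}\,\|\Pf u_h\|_{L^2(\Omega)}.$$
The interpolation estimate for $\Ic$ gives $\|\Pf u_h\|_{L^2(\Omega)}\lesssim H\tnorm{\Pf u_h}$, and choosing $v_H$ as the best coarse $L^2$-approximation of $f_h$ yields $\|f_h-v_H\|_{L^2(\Omega)}\lesssim H\|f_h\|_{H^1(\Omega)}$; two powers of $H$ combine to $\tnorm{\Pf u_h}\lesssim H^2\|f_h\|_{H^1(\Omega)}$. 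Finally $\|f_h\|_{H^1(\Omega)}\lesssim1$ follows from $\lambda_h\lesssim1$, the hypothesis $\|u-u_h\|_{H^1(\Omega)}\lesssim1$, and a uniform $L^\infty$-bound on the discrete ground state needed to control $\nabla(u_h^3)=3u_h^2\nabla u_h$. This proves \eqref{final-H1-estimate}.

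To gain the extra power of $H$ in \eqref{final-L2-estimate} I would run a duality argument for the linearization of the GPE at $u$, governed by the operator $-\ddiv A\nabla\cdot+b\cdot+3\beta u^2\cdot-\lambda\cdot$. Its solvability with $H^2$-type regularity on the $L^2$-orthogonal complement of $\mathrm{span}\{u\}$ is precisely where the spectral gap of the simple ground state and the smallness assumption on $h$ of Canc\`es et al. \cite{Cances:Chakir:Maday:2010} enter, since the shifted operator is indefinite and must be inverted on the correct subspace. Testing the dual solution against $u-u_H^\cs$, resolving its coarse part and invoking the $L^2$-orthogonality of $\Vf$ once more, trades one factor $\tnorm{u-u_H^\cs}\lesssim H^2+\|u-u_h\|_{H^1(\Omega)}$ for an extra factor $H$ from the dual approximation, yielding $\|u-u_H^\cs\|_{L^2(\Omega)}\lesssim H^3+H\|u-u_h\|_{H^1(\Omega)}$. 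For the eigenvalue I would use the identities $\lambda=a(u,u)+\beta\|u\|_{L^4(\Omega)}^4$ and $\lambda_H^\cs=a(u_H^\cs,u_H^\cs)+\beta\|u_H^\cs\|_{L^4(\Omega)}^4$: the energy-norm part of the difference is quadratic, hence $O(\tnorm{u-u_H^\cs}^2)$, while the $L^4$-part is controlled by $\|u-u_H^\cs\|_{L^2(\Omega)}$, so $|\lambda-\lambda_H^\cs|$ inherits the bound $H^3+H\|u-u_h\|_{H^1(\Omega)}$.

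The main obstacle is the interaction of the two nonlinearities, the nonlinear eigenvalue structure and the cubic term $\beta|u|^2u$. Both the coercivity $\tnorm{u-w}^2\lesssim E(w)-E(u)$ on the constraint sphere and, above all, the construction of a stable dual problem hinge on handling this cubic term together with the eigenvalue shift $-\lambda$: the linearized operator is neither coercive nor positive definite on all of $H^1_0(\Omega)$, and only the spectral gap of the ground state together with the smallness of $h$ render it invertible on the relevant subspace. Keeping the cubic nonlinearity under control in the energy expansion and in the duality pairing --- via uniform $L^\infty$- and $L^6$-bounds and the embedding $H^1_0(\Omega)\hookrightarrow L^6(\Omega)$ for $d\le3$ --- is the delicate part that genuinely exceeds the linear theory.
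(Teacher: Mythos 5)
Your overall strategy coincides with the paper's: the $H^1$ bound is obtained from the two-sided relation between $E(w)-E(u)$ and $\tnorm{u-w}$ on the constraint sphere (the lower bound being the coercivity from Canc\`es--Chakir--Maday), with the normalized competitor $\Pc u_h/\|\Pc u_h\|_{L^2(\Omega)}$ and a higher-order normalization defect, and the $L^2$/eigenvalue bounds follow from a duality argument for the linearization $c_{\lambda,u}$ posed on $\uorth$, exactly as in Lemma~\ref{cances-chakir-maday-theorem} and Lemma~\ref{L2-estimate-lemma} of the paper.

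There is, however, one genuine gap in your decisive step $\tnorm{\Pf u_h}\lesssim H^2$. You control the residual $f_h=\lambda_h u_h-\beta|u_h|^2u_h$ in $H^1$ by invoking ``a uniform $L^\infty$-bound on the discrete ground state'' to handle $\nabla(u_h^3)=3u_h^2\nabla u_h$. Such a bound is not available in this setting (there is no discrete maximum principle and no embedding $H^1\hookrightarrow L^\infty$ for $d\ge 2$), and the paper explicitly flags the missing uniform bound for $\|u_h\|_{L^{\infty}(\Omega)}$ as an obstruction it must work around in Section~\ref{subsubsection-postprocessed-L2}. The paper's proof of Lemma~\ref{corollary-properties-eigenvalue-decompose} instead splits $u_h^3=u^3+(u_h^3-u^3)$: the first part is treated with the $L^\infty$-bound \eqref{l-infty-estimate} on the \emph{continuous} ground state $u$ together with the double $(1-I_H)$ trick, while the second is estimated via $u_h^3-u^3=(u_h^2+u_hu+u^2)(u_h-u)$ and the $L^6$ embedding, producing the extra (harmless) term in $\tnorm{\Pf u_h}\lesssim H^2+H\|u-u_h\|_{H^1(\Omega)}$; your argument should be repaired in the same way, after which \eqref{final-H1-estimate} follows unchanged. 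A second, smaller caution: the extra factor $H$ in the duality step must come from the kernel property $I_H\vert_{\Vf}=0$ (equivalently the $L^2$-quasi-orthogonality of the decomposition), not from ``$H^2$-type regularity'' of the dual problem, since the paper only assumes $A\in L^\infty(\Omega,\mathbb{R}^{d\times d}_{sym})$ and advertises that no elliptic regularity is needed; your sketch gestures at both mechanisms, and only the former is admissible here.
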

\begin{proof} The proof is postponed to Section~\ref{proof-main-result-pre}. \end{proof}

The additional post-processing improves, roughly speaking, the order of accuracy by one. 
\begin{theorem}[Error estimates for the post-processed approximation]
\label{main-result-post}
$\\$
Assume that $h$ is sufficiently small. The post-processed approximation $u_h^\cs$ and the post-processed eigenvalue $\lambda_h^\cs$ satisfy:
\begin{align}
\label{final-post-processed-H1-estimate}\|u -u^\cs_h\|_{H^1(\Omega)} &\lesssim H^3 + \| u - u_h \|_{H^1(\Omega)},\\
\label{final-post-processed-L2-estimate}| \lambda - \lambda^\cs_h |+\| u - u^\cs_h \|_{L^2(\Omega)} &\lesssim H^4 + C_{L^2}(h,H).
\end{align}
The constant $C_{L^2}(h,H)$ behaves roughly like $H^2 \| u - u_h \|_{H^1(\Omega)}$ and can be extracted from the proofs in Section~\ref{subsubsection-postprocessed-L2}. 
\end{theorem}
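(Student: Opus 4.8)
The plan is to regard the post-processed function $u_h^\cs$ as a perturbation of the fine reference $u_h$ and to feed in Theorem~\ref{main-result-pre} as the only information on the coarse solution. Since the minimizer $u_h$ of Problem~\ref{discrete-problem-h} satisfies $a(u_h,\phi_h)+\beta(|u_h|^2u_h,\phi_h)=\lambda_h(u_h,\phi_h)$ for all $\phi_h\in V_h$, subtracting the defining identity of Problem~\ref{two-grid-postprocessing} shows that the difference $\psi:=u_h-u_h^\cs\in V_h$ solves the \emph{linear} elliptic problem $a(\psi,\phi_h)=(r,\phi_h)$ for all $\phi_h\in V_h$, with residual $r=\lambda_h e+\delta\lambda\,u_H^\cs-\beta\big(|u_h|^2u_h-|u_H^\cs|^2u_H^\cs\big)$ assembled from the coarse eigenfunction error $e:=u_h-u_H^\cs$ and the eigenvalue defect $\delta\lambda:=\lambda_h-\lambda_H^\cs$. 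By Theorem~\ref{main-result-pre} (combined with the fine-scale error of $u_h$), both $\|e\|_{L^2(\Omega)}$ and $|\delta\lambda|$ are of order $H^3$ up to the fine-scale contribution $\|u-u_h\|_{H^1(\Omega)}$; this is the quantitative input for everything that follows.

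For the energy estimate \eqref{final-post-processed-H1-estimate} I would test the linear problem with $\phi_h=\psi$ and invoke coercivity of $a$. The terms $\lambda_h(e,\psi)$ and $\delta\lambda\,(u_H^\cs,\psi)$ are bounded by $\|e\|_{L^2(\Omega)}\|\psi\|_{L^2(\Omega)}$ and $|\delta\lambda|\,\|\psi\|_{L^2(\Omega)}$, while the cubic defect is controlled by the pointwise inequality $\big||u_h|^2u_h-|u_H^\cs|^2u_H^\cs\big|\lesssim(|u_h|^2+|u_H^\cs|^2)\,|e|$ together with a Hölder splitting that keeps $e$ in $L^2(\Omega)$ and all remaining factors in $L^6(\Omega)$. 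The Sobolev embedding $H^1_0(\Omega)\hookrightarrow L^6(\Omega)$ and the uniform bounds on $\|u_h\|_{L^6(\Omega)},\|u_H^\cs\|_{L^6(\Omega)}$ then give $\lesssim\|e\|_{L^2(\Omega)}\tnorm{\psi}$, so that $\tnorm{\psi}\lesssim H^3+\|u-u_h\|_{H^1(\Omega)}$; the triangle inequality with $\|u-u_h\|_{H^1(\Omega)}$ closes \eqref{final-post-processed-H1-estimate}.

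The estimate \eqref{final-post-processed-L2-estimate} is the heart of the matter, and I would obtain it by an Aubin--Nitsche duality. Introduce $z\in H^1_0(\Omega)$ with $a(\phi,z)=(\psi,\phi)$ and its Ritz image $z_h\in V_h$; Galerkin orthogonality and $\psi\in V_h$ give $\|\psi\|_{L^2(\Omega)}^2=(r,z_h)$. The gain of one power of $H$ over the energy estimate must be extracted from the two-scale structure of \eqref{e:decompositionc}: using the Euler--Lagrange identities for $u_h$ and for $u_H^\cs$ (the latter only admissible against $\Vc$) together with the $a$-orthogonality $\Vc\perp_a\Vf$, the pairing $(r,z_h)$ can be rearranged so that the coarse contributions cancel and essentially only a residual paired with the fine part $\Pf z_h\in\Vf$ of the dual solution survives. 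On this fine part one has the superconvergence $\tnorm{\Pf z_h}\lesssim H\,\|\psi\|_{L^2(\Omega)}$, hence $\|\Pf z_h\|_{L^2(\Omega)}\lesssim H^2\,\|\psi\|_{L^2(\Omega)}$, because $\Pf z_h$ itself solves the dual problem \emph{restricted to} $\Vf$ and every $v\in\Vf=\kernel(I_H\vert_{V_h})$ obeys $\|v\|_{L^2(\Omega)}\lesssim H\tnorm{v}$; moreover $\Vf$ is $L^2$-orthogonal to $V_H$, so coarse factors may be projected away for free. Together with the $L^2$-quasi-orthogonality of the splitting and the eigenvalue-specific superconvergence $\tnorm{\Pf u}\lesssim H^2$ (which is exactly why the \emph{coarse} rates in Theorem~\ref{main-result-pre} are already one order better than for a generic source problem), these ingredients should promote the $H^3$ of the energy estimate to $H^4$, with the fine-scale remainder collecting into $C_{L^2}(h,H)\approx H^2\,\|u-u_h\|_{H^1(\Omega)}$.

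The eigenvalue error then reduces to the $L^2$ error: writing $\lambda_h^\cs=\mathcal{R}(u_h^\cs)$ with the nonlinear Rayleigh quotient $\mathcal{R}(v)=(a(v,v)+\beta\|v\|_{L^4(\Omega)}^4)\,\|v\|_{L^2(\Omega)}^{-2}$ and expanding around $u$, the first-order term does \emph{not} vanish (unlike the linear case) but equals $2\beta(|u|^2u,\,u-u_h^\cs)$, whence $|\lambda-\lambda_h^\cs|\lesssim\|u-u_h^\cs\|_{L^2(\Omega)}+\tnorm{u-u_h^\cs}^2\lesssim H^4+C_{L^2}(h,H)$. I expect the main obstacle to be precisely this last power of $H$ in \eqref{final-post-processed-L2-estimate}: the eigenvalue-defect contribution $\delta\lambda\,(u_H^\cs,z_h)$ and the component of the error along the eigenfunction direction (recall that $u_h^\cs$ is \emph{not} $L^2$-normalized) are only $O(H^3)$ under a naive Cauchy--Schwarz bound, and demoting them to $O(H^4)$ forces one to exploit the variational structure of the eigenproblem and the $L^2$-orthogonality of the decomposition simultaneously, while keeping all constants robust against oscillations of $A$ and $b$ and tracking the two nonlinearities throughout.
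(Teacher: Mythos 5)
Your treatment of the $H^1$ bound \eqref{final-post-processed-H1-estimate} is essentially the paper's proof: subtract the Euler--Lagrange identity for $u_h$ from the defining equation of Problem~\ref{two-grid-postprocessing}, test the resulting linear problem with $u_h-u_h^\cs$, and bound the residual by $\|u_h-u_H^\cs\|_{L^2(\Omega)}$ and $|\lambda_h-\lambda_H^\cs|$ via H\"older and $H^1_0(\Omega)\hookrightarrow L^6(\Omega)$; Theorem~\ref{main-result-pre} then supplies the $H^3$. Your reduction of the eigenvalue error to the $L^2$ error through the nonlinear Rayleigh quotient is also sound and is a mild variant of the paper's Corollary~\ref{corollary-post-processed-eigvl}.

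The gap is in the $L^2$ estimate, and it sits exactly where you place your ``main obstacle.'' Your Aubin--Nitsche duality against the plain form $a(\cdot,\cdot)$ leaves the eigenvalue-defect term $(\lambda_h-\lambda_H^\cs)(u_H^\cs,z_h)_{L^2(\Omega)}$ and the component of the error along the eigenfunction (equivalently, the normalization defect of $u_h^\cs$) at $O(H^3)$, and you assert, without an argument, that exploiting the variational structure and the $L^2$-orthogonality ``simultaneously'' demotes them to $O(H^4)$. That assertion is the content of the theorem. The paper does not rescue these terms inside your duality; it uses a different dual problem altogether, namely the adjoint problem \eqref{adjoint-problem-for-L2} for the linearized operator $c_{\lambda,u}$ posed on $\uorth$, fed into the error identity of Lemma~\ref{lemma-L2-identity}. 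Because the dual function $\psi$ there satisfies $(u,\psi)_{L^2(\Omega)}=0$, the eigenvalue defect enters only through $(\lambda_H^\cs-\lambda)(u_H^\cs-u,\psi)_{L^2(\Omega)}=O(H^6)$ (see \eqref{pp-estimate-L2-step2}) instead of through $(\lambda_h-\lambda_H^\cs)(u_H^\cs,z_h)_{L^2(\Omega)}=O(H^3)$. The eigendirection component is disposed of by a separate normalization step, which proves $\bigl|\,\|u_0^\cs\|_{L^2(\Omega)}-1\,\bigr|\lesssim H^4$ by exploiting that $u_0^\cs-u_H^\cs\in\Vfzero$ (so $I_H$ annihilates the difference and every $L^2$ pairing gains a factor $H$) together with a comparison of $\|u_0^\cs\|^2_{H^1(\Omega)}$ and $\|u_H^\cs\|^2_{H^1(\Omega)}$; and the cubic nonlinearity at second order is controlled via $a^3-3ab^2+2b^3=(a-b)^2(a+2b)$ combined with the squared $H^1$ rate from Theorem~\ref{main-result-pre}. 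None of these three devices appears in your proposal, so the promotion from $H^3$ to $H^4$ is not established. (Note also that the paper itself carries this argument out in full only for the semi-discrete case $h=0$ and states the fully discrete version as a proposition.)
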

\begin{proof} The proof is postponed to Section~\ref{proof-main-result-post}. \end{proof}

Let us emphasize that both theorems remain valid for $\Vc$ replaced with its sparse approximation $V^{\cs}_{H,h,k}$ (cf. Section~\ref{ss:sparse}) for moderate localization parameter $k\gtrsim |\log H|$. 

We shall discuss the behavior of the finescale errors $u-u_h$ and $\lambda-\lambda_h$. Recall from \cite{Cances:Chakir:Maday:2010} that for a bounded domain $\Omega$ with polygonal Lipschitz-boundary, $A\in [W^{1,\infty}(\Omega)]^{d \times d}$, and  {\rm sufficiently small} $h$, the fine scale error $\| u - u_h \|_{H^1(\Omega)}$ satisfies the optimal estimate
\begin{align}\label{e:erroruh}
\| u - u_h \|_{H^1(\Omega)} + h^{-1} \| u - u_h \|_{L^2(\Omega)} + h^{-1}|\lambda - \lambda_h|  &\lesssim h.
\end{align}
The proof in \cite{Cances:Chakir:Maday:2010} is for constant $A=1$ and hyperrectangle $\Omega$ but it is easily checked that the estimates remain valid for any bounded domain $\Omega$ with polygonal Lipschitz-boundary and $A\in [W^{1,\infty}(\Omega)]^{d \times d}$. 
Under these assumptions our a priori estimates for the post-processed approximation of the ground state eigenvalue summarize as follows
\begin{equation*}
 | \lambda - \lambda^\cs_h | \lesssim H^4 + H^2  h.
\end{equation*}
Hence, in this regular setting, the choice $H=h^{1/2}$ ensures that the loss of accuracy is negligible when compared to the accuracy of the expensive full fine scale approximation $\lambda_h$. However, with regard to the numerical experiment in Section~\ref{ss:numexp1} below, this choice might be pessimistic. 

Moreover, note that the fine scale error depends crucially on higher Sobolev regularity of the solution whereas our estimates for the coarse scale error require only minimal regularity that holds under the assumption (a)--(d) in Section~\ref{section-model}. Thus, we believe that in a less regular setting, even coarser choices of $H$ relative to $h$ will balance the discretization errors on the coarse and the fine scale.

\section{Numerical experiments}
\label{section-numerical-experiments}

Any numerical approach for the computation of ground states of a BEC involves an iterative algorithm that starts with a given initial value and diminishes the energy of the density functional $E$ in each iteration step. In this contribution, we use the Optimal Damping Algorithm (ODA) originally developed by Canc\`es and Le Bris \cite{Cances:LeBris:2000,Cances:2000} for the Hartree-Fock equations, since it suits our pre-processing framework. The ODA involves solving a linear eigenvalue problem in each iteration step. However, after pre-processing these linear eigenvalue problems are very low dimensional and the precomputed basis of $\Vc$ can be reused for each of these problems making the iterations extremely cheap. The approximations produced by the ODA are known to rapidly converge to a solution of the discrete minimization problem (see \cite{Dion:Cances:2007} and \cite{Cances:2000} for a proof in the setting of the Hartree-Fock equations).
All subsequent numerical experiments have been performed using MATLAB.

\subsection{Numerical results for harmonic potential}\label{ss:numexp1}
In this section, we choose the smooth experimental setup of \cite[Section~4, p. 109 and Fig.~2 (bottom)]{Cances:Chakir:Maday:2010}, i.e., 
$\Omega:=(0,\pi)^2$, $b(x_1,x_2):=x_1^2+x_2^2$, $A=1$, $\beta=1$ and with homogeneous Dirichlet boundary condition. Our method depends basically on three parameters, the coarse mesh size $H$, the fine mesh size $h$, and the localization parameter $k$ (cf. Section~\ref{ss:sparse} and \cite{HP12}). In all computations of this section we couple $k$ to the coarse mesh size by choosing $k=2\log_2 H$. This choice is made such that the error of localization is negligible when compared with the errors committed be the fine scale discretization and the upscaling. All approximations are computed with the ODA method as presented in \cite[Section~2]{Dion:Cances:2007} with accuracy parameter $\varepsilon_{\operatorname{ODA}}=10^{-14}$. 

\subsubsection{Comparison with full fine scale approximation}\label{ss:numexp1:1}
\begin{figure}[t]
\includegraphics[width=0.495\textwidth]{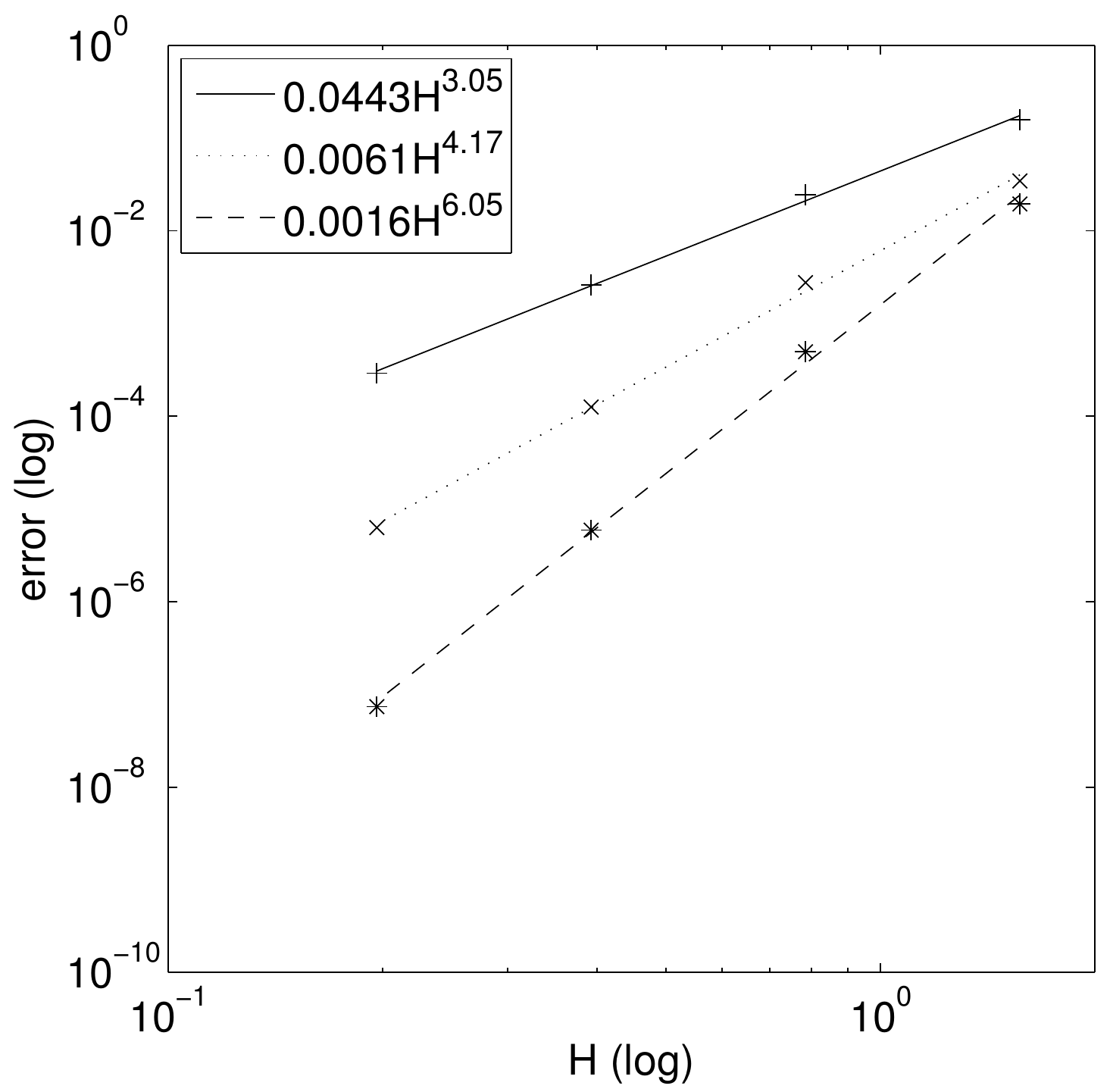}
\includegraphics[width=0.495\textwidth]{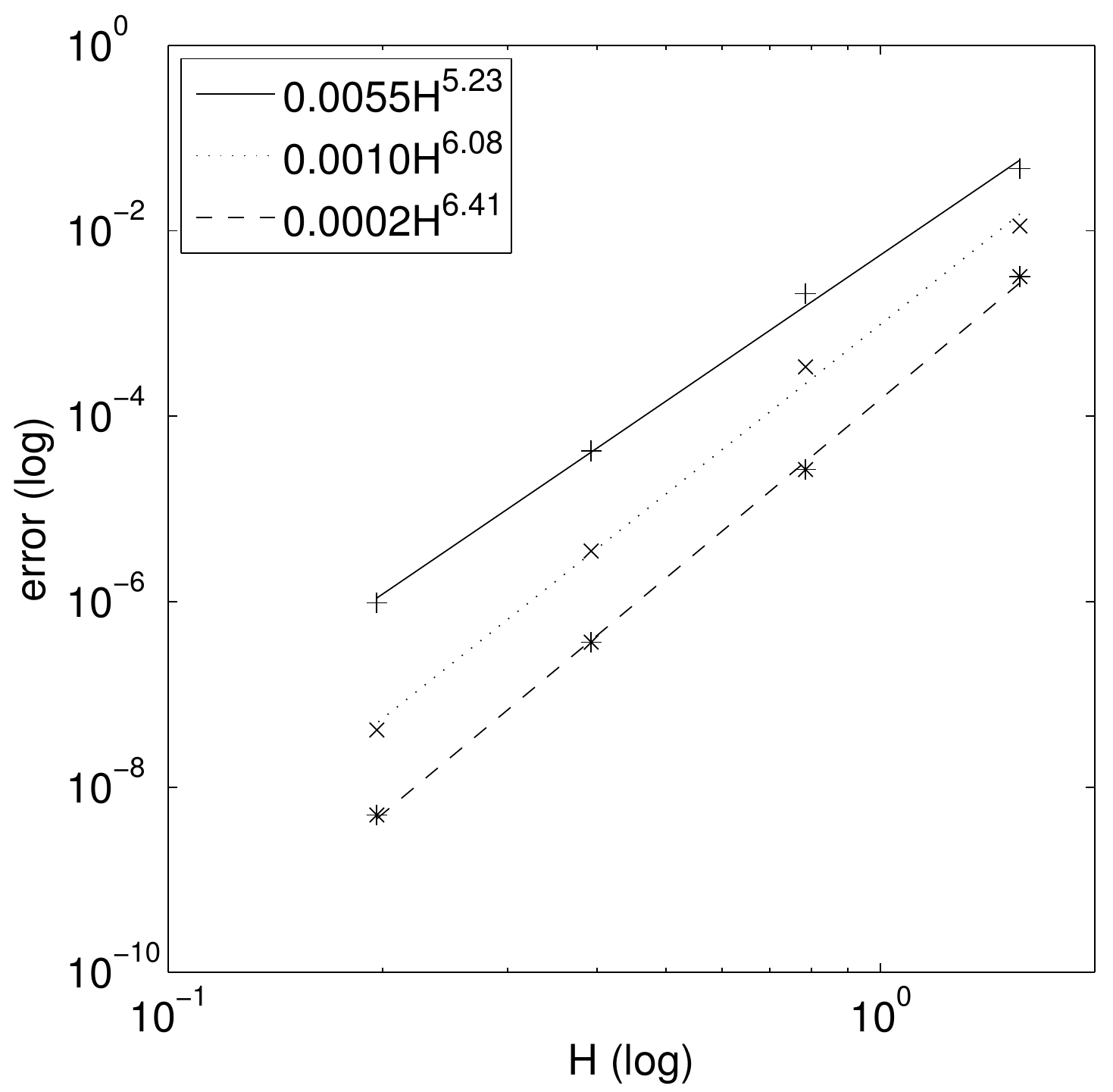}
\caption{Results for harmonic potential. Left: Errors of pre-processed approximation $\| u_h - u_H^\cs \|_{H^1(\Omega)}$ ($+$), {$\| u_h - u_H^\cs \|_{L^2(\Omega)}$} ($\times$), and $| \lambda_h - \lambda_H^\cs |$ ($*$) vs. coarse mesh size $H$. 
Right: Errors of post-processed approximation $\| u_h - u_h^\cs \|_{H^1(\Omega)}$ ($+$), $\| u_h - u_h^\cs \|_{L^2(\Omega)}$ ($\times$), and $| \lambda_h - \lambda_h^\cs |$ ($*$) vs. coarse mesh size $H$.}\label{fig:harmonic}
\end{figure}
In the first experiment, we consider uniform coarse meshes $\T_H$ with mesh width parameters $H=2^{-1}\pi,2^{-2}\pi,\ldots,2^{-4}\pi$ of $\Omega$. The fine mesh $\T_h$ for the pre- and post-processing has width $h=2^{-7}\pi$ and remains fixed. We study the error committed by coarsening from a fine scale $h$ to several coarse scales $H$, i.e., we study the distance between the ground state $(u_h,\lambda_h)$ of Problem~\ref{discrete-problem-h} and either the coarse scale approximation $(u^\cs_H,\lambda^\cs_H)$ of Problem~\ref{discrete-problem-in-V-c} (with underlying finescale $h$) or its post-processed version $(u^\cs_h,\lambda^\cs_h)$ of Problem~\ref{two-grid-postprocessing}. 
Our theoretical results do not allow predictions about the coarsening error. Most likely, this is an artifact of our theory and we conjecture that $(u_h,\lambda_h)$ and its coarse approximations $(u^\cs_H,\lambda^\cs_H)$ and $(u^\cs_h,\lambda^\cs_h)$ are in fact super-close in the sense of 
\begin{equation}\label{e:superclose}
\begin{aligned}
 H^{-1}\| u_h - u_H^\cs \|_{H^1(\Omega)}+\| u_h - u_H^\cs \|_{L^2(\Omega)}+| \lambda_h - \lambda^\cs_H | &\lesssim H^3,\\
 H^{-1}\| u_h - u_h^\cs \|_{H^1(\Omega)}+\| u_h - u_h^\cs \|_{L^2(\Omega)}+| \lambda_h - \lambda^\cs_h | &\lesssim H^4.
\end{aligned}
\end{equation}
This assertion is true in the limit $h\rightarrow 0$. Section~\ref{ss:numexphigh} supports numerically the assertion for positive $h$. 
Figure~\ref{fig:harmonic} reports the numerical results. Observe that the experimental rates with respect to $H$ displayed in the figures are in fact better than the rates indicated by Theorems~\ref{main-result-pre}--\ref{main-result-post} and conjectured in  \eqref{e:superclose}.
The reason could be the high regularity of the underlying (exact) solution $u\in H^3(\Omega)$. We do not exploit additional regularity in our error analysis. Similar observations have been made for the linear eigenvalue problem; see
\cite[Remark 3.3]{Malqvist:Peterseim:2012} for details and some justification of higher rates under additional regularity assumptions.
Our implementation is not yet adequate for a fair comparison with regard to computational complexity and computing times between standard fine scale finite elements and our two-level techniques. However, to convince the reader of the potential savings in our new approach, let us mention that the number of iterations of the ODA were basically the same for both approaches in all numerical experiments. This statement applies as well to more challenging setups with larger values of $\beta$ (see, e.g., Section~\ref{ss:numexp2} below) where ODA needs many iterations to fall below some prescribed tolerance. We, hence, conclude that the actual speed-up of our approach is truly reflected by the dimension reduction from $h^{-d}$ to $H^{-d}$ up to the overhead $\mc{O}(k)=\mc{O}(\log{|H|})$ induced by slightly denser (but still sparse) finite element matrices on the coarse level.

\subsubsection{Comparison with high-resolution numerical approximation}\label{ss:numexphigh}
\begin{figure}[t]
\includegraphics[width=0.495\textwidth]{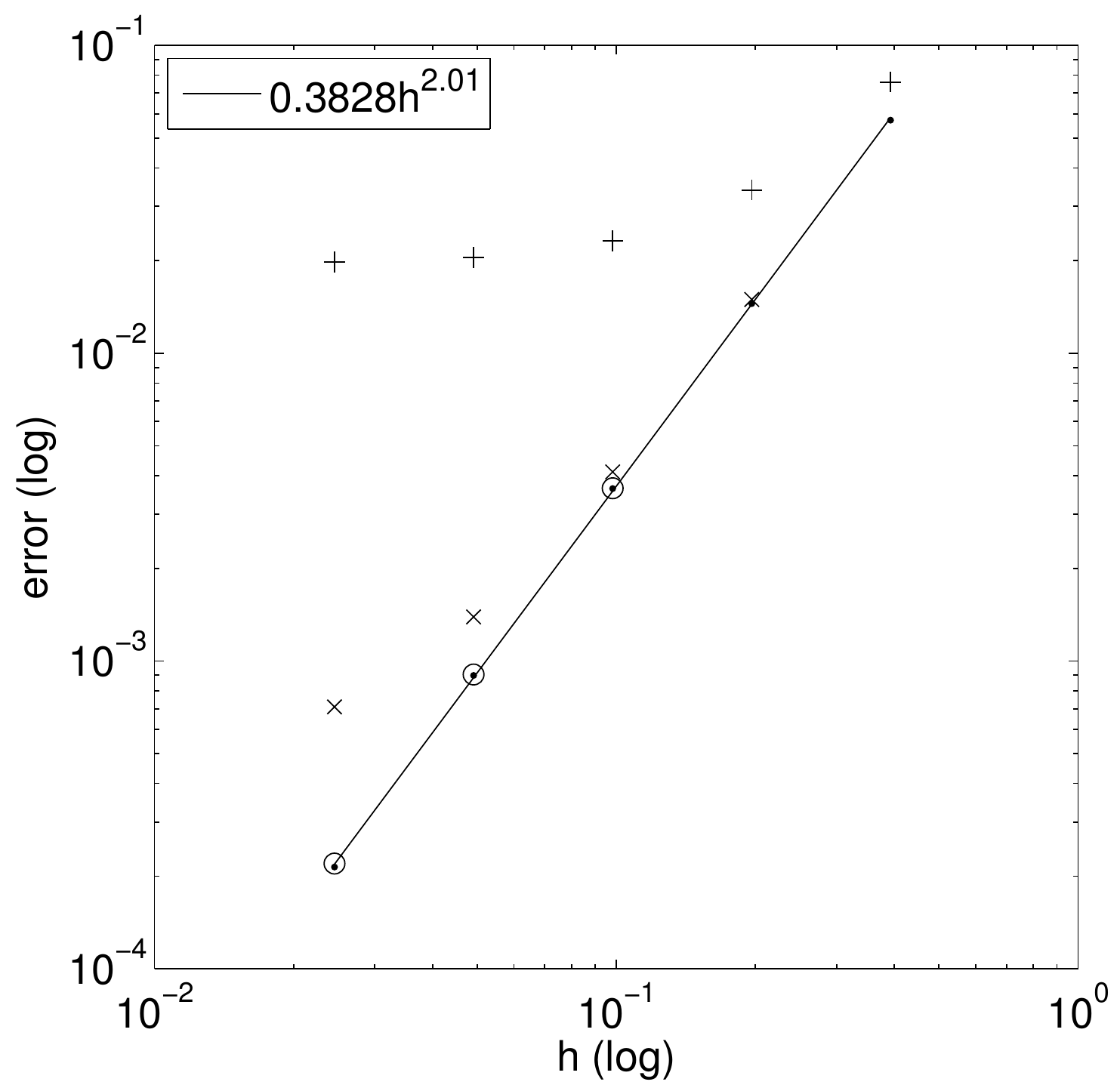}
\includegraphics[width=0.495\textwidth]{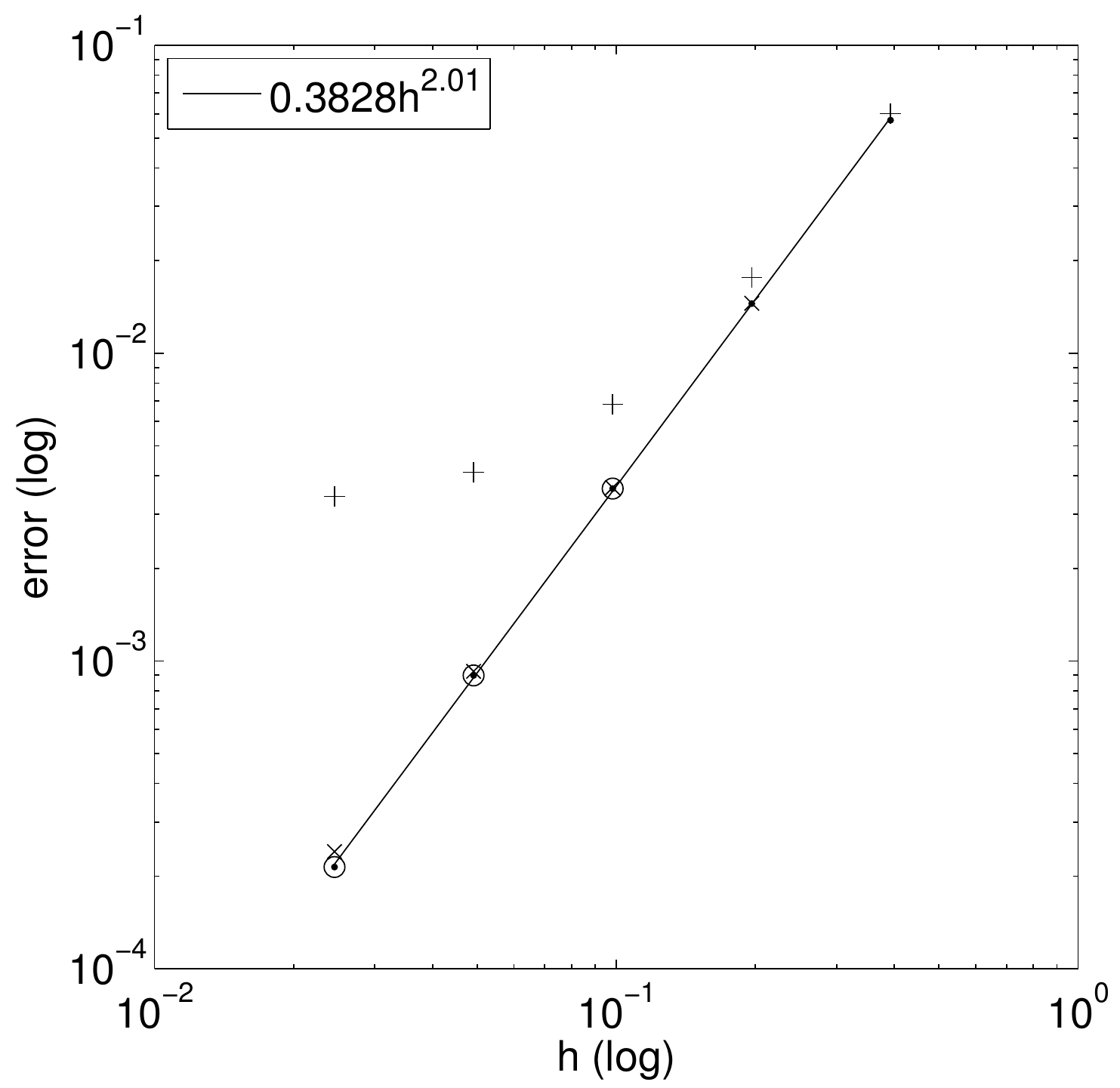}
\caption{Results for harmonic potential. Left: (Estimated) errors of pre-processed approximation $| \lambda - \lambda_H^\cs |$ for fixed values $H=2^{-1}\pi$  ($+$), $H=2^{-2}\pi$ ($\times$) and $H=2^{-3}\pi$ ($\circ$) vs. fine mesh size $h$. 
Right: (Estimated) errors of post-processed approximation $| \lambda - \lambda_{h}^\cs |$ for fixed values $H=2^{-1}\pi$  ($+$), $H=2^{-2}\pi$ ($\times$) and $H=2^{-3}\pi$ ($\circ$) vs. fine mesh size $h$. 
In both plots, the (estimated) error of the standard FEM on the fine mesh $| \lambda - \lambda_h |$ ($\bullet$) is depicted for reference.}\label{fig:harmonich}
\end{figure}
In the second experiment we investigate the role of the fine scale parameter $h$. We consider uniform coarse meshes $\T_H$ with mesh width parameters $H=2^{-1}\pi,\ldots,2^{-3}\pi$ and uniform fine meshes $\T_h$ for $h=H/4,\ldots, 2^{-7}\pi$ for pre- and post-processing computations. The error between the exact eigenvalue $\lambda$ and coarse approximations $\lambda_{H}^\cs$ and $\lambda_{h}^\cs$ is estimated via a high-resolution numerical solution on a mesh of width $2^{-9}\pi$. The results are reported in Figure~\ref{fig:harmonich}. For the sake of clarity, we show eigenvalue errors only. We conclude that it would have been sufficient to choose $H\approx h^{1/3}$ to achieve the accuracy of $\lambda_h$ by our coarse approximation scheme with post-processing. 

\subsection{Numerical results for discontinuous periodic potential}\label{ss:numexp2}
This section addresses the case of a BEC that is trapped in a periodic potential. Periodic potentials are of special interest since they can be used to explore physical phenomena such as Josephson oscillations and macroscopic quantum self-trapping of the condensate (c.f. \cite{PhysRevA.57.2030,PhysRevA.57.R28}). Here we use a potential $b$ that describes a periodic array of quantum wells that can be experimentally generated by the interference of overlapping laser beams (c.f. \cite{Xue:Zhao-Xin:Wei-Dong:2009}).

Let $\Omega=(0,\pi)^2$, $A=1$, and $\beta=4$. Given $b_t=100$ and $L=4$, define
\begin{align*}
b_0(x_1,x_2):=\begin{cases}
0 \quad &\mbox{for} \enspace x \in ]\frac{1}{4},\frac{3}{4}[^2\\
b_t \quad &\mbox{else}
\end{cases}
\end{align*}
and the potential $b(x)=b_0\left(L\left(x/\pi-\frac{\lfloor Lx/\pi\rfloor}{L}\right)\right)$.

Consider the same numerical setup as in Section~\ref{ss:numexp1:1} (i.e., we draw our attention again to the coarsening error $u_h - u_H^\cs$) with the exception that we were able to reduce the localization parameter $k=\log_2 H$ without affecting the best convergence rates possible. Figure~\ref{fig:periodic} reports the errors between the finescale reference discretization and our coarse approximations. 
\begin{figure}[t]
\includegraphics[width=0.495\textwidth]{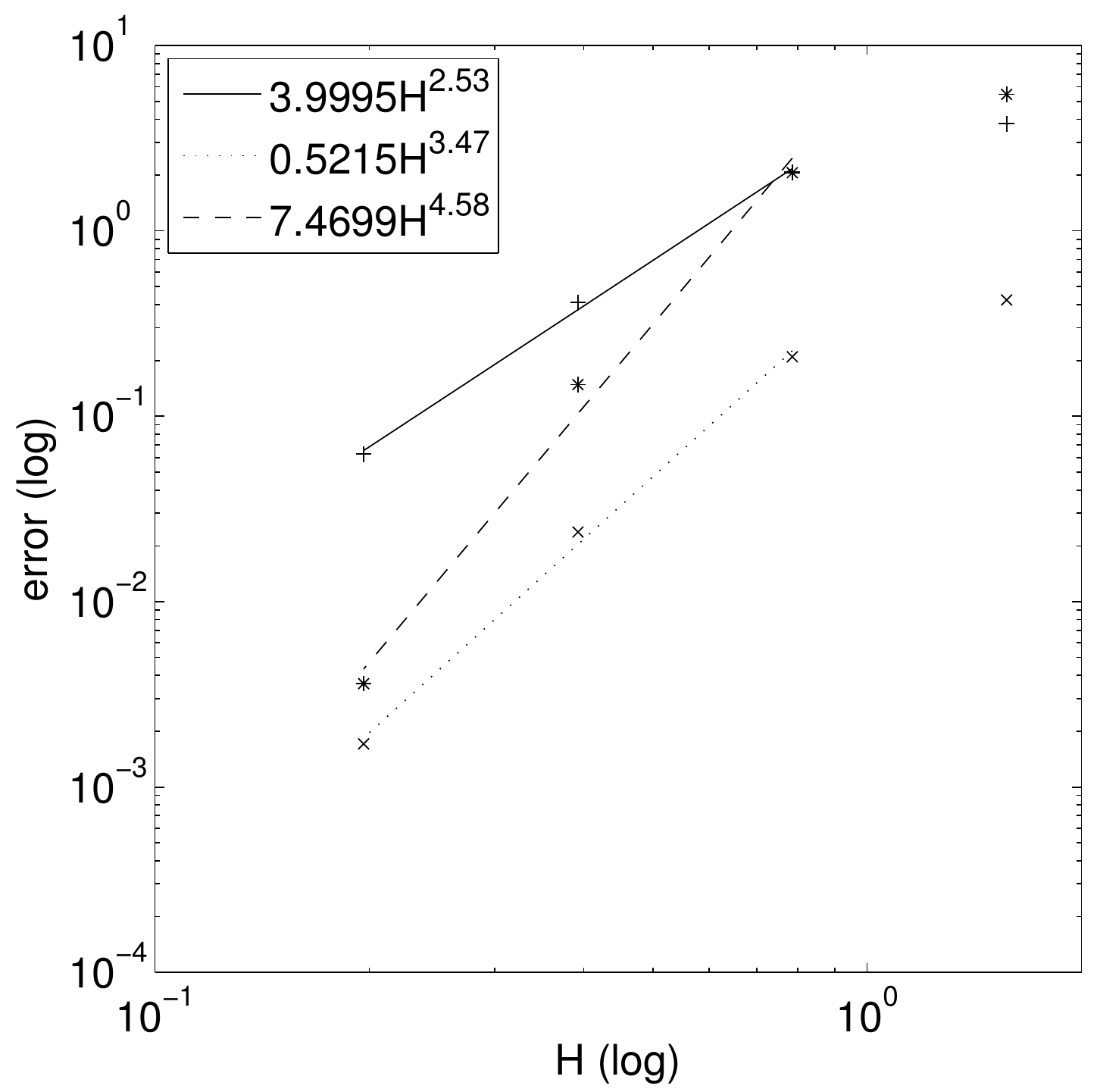}
\includegraphics[width=0.495\textwidth]{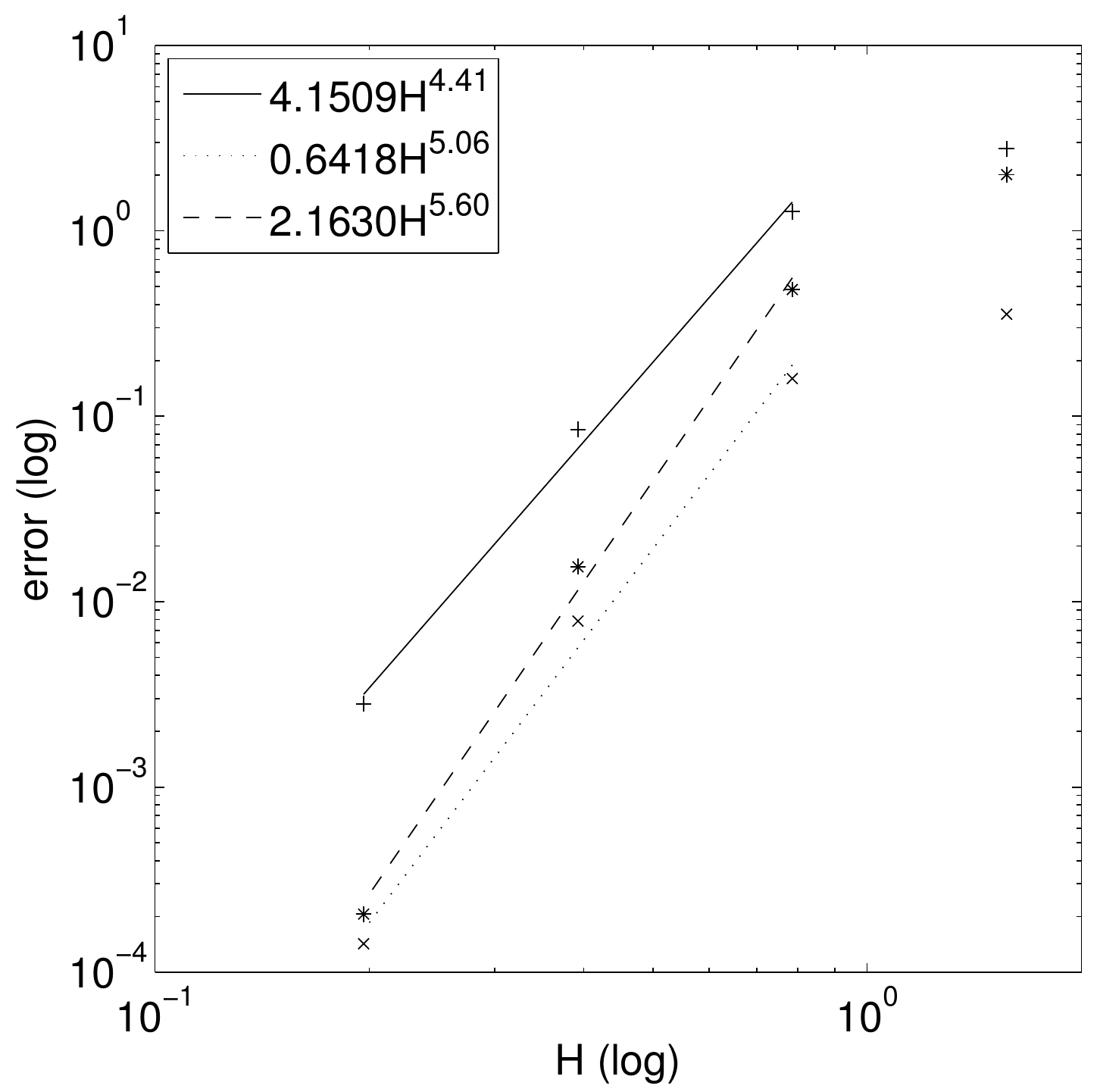}
\caption{Results for periodic potential. Left: Errors of pre-processed approximation $\| u_h - u_H^\cs \|_{H^1(\Omega)}$ ($+$), {$\| u_h - u_H^\cs \|_{L^2(\Omega)}$} ($\times$), and $| \lambda_h - \lambda_H^\cs |$ ($*$) vs. coarse mesh size $H$. 
Right: Errors of post-processed approximation $\| u_h - u_h^\cs \|_{H^1(\Omega)}$ ($+$), $\| u_h - u_h^\cs \|_{L^2(\Omega)}$ ($\times$), and $| \lambda_h - \lambda_h^\cs |$ ($*$) vs. coarse mesh size $H$.}\label{fig:periodic}
\end{figure}
For the discontinuous potential, the experimental rates (with respect to $H$) are slightly worse than those ones observed in Section~\ref{ss:numexp1:1}. However, they are still better than the rates indicated by Theorems~\ref{main-result-pre}--\ref{main-result-post} and conjectured in \eqref{e:superclose}.

\section{Proofs of the main results}
\label{section-proofs-main-results}
In this section we are concerned with proving the main theorems.
\subsection{Auxiliary results}
\label{subsection-auxiliary-estimates}

An application of \cite[Theorem 1]{Cances:Chakir:Maday:2010} shows that $u_h$ and $u^\cs_H$ both converge to $u$ in $H^1(\Omega)$, which guarantees stability.

\begin{remark}[Stability of discrete approximations]
For sufficiently small $h$ we have
\begin{align}
\label{u-energy} \| u_h \|_{H^1(\Omega)} &\le \sqrt{\lambda_h} \lesssim \sqrt{\lambda} \qquad \mbox{and}\\
\label{u-L-4-estimate}|| u_h||_{L^4(\Omega)} &\le \left(\frac{\lambda_h}{\beta}\right)^{\frac{1}{4}} \lesssim \left(\frac{\lambda}{\beta}\right)^{\frac{1}{4}}.
\end{align}
The same results hold for $u_h$ replaced by $u^\cs_H$ and $\lambda_h$ replaced by $\lambda_H^{\cs}$ for $h$ and $H$ sufficiently small.
\end{remark}

The bound \eqref{u-energy} is obvious using $\|u_h\|_{L^2(\Omega)}=1$ and the $H^1$-convergence $u_h \rightarrow u$ which guarantees $\lambda_h \rightarrow \lambda$. Estimate \eqref{u-L-4-estimate} directly follows from the definitions of $\lambda_h$ and $E_h$ which gives us $\lambda_h \ge 2 E(u_h) = a(u_h,u_h) + \frac{\beta}{2} \|u_h\|^4_{L^4(\Omega)} \ge \frac{\beta}{2} \|u_h\|^4_{L^4(\Omega)}$.
\begin{remark}[$L^\infty$-bound]
The solution $u$ of Problem~\ref{weak-problem} is in $L^\infty(\Omega)$. This follows from the uniqueness of $u \in H^1_0(\Omega)$ which shows that it is also the unique solution of the linear elliptic problem
\begin{align*}
\int_{\Omega} A \nabla u \cdot \nabla \phi + b u \phi \dx = \int_{\Omega} \tilde{f} \phi \dx \qquad\text{for all }\phi \in H^1_0(\Omega),
\end{align*}
where $\tilde{f}:= (\lambda u -  \beta |u|^3) \in L^2(\Omega)$. Standard theory for linear elliptic problems (c.f. \cite[Theorem~8.15, pp. 189--193]{Gilbarg}) then yields the existence of a constant $c$ only depending on $\Omega$, $d$ and $\|\gmin^{-1} b\|_{L^{2}(\Omega)}$ such that
\begin{multline}\label{l-infty-estimate}
\| u \|_{L^{\infty}(\Omega)} \le c(\|u\|_{L^2(\Omega)} + \gmin^{-1} \|\tilde{f}\|_{L^2(\Omega)} ) \lesssim 1+ \| u\|_{L^6(\Omega)}^3 \lesssim  1+ \| u \|^3_{H^1(\Omega)}.
\end{multline}
\end{remark}
\subsection{Properties of the coarse space $\Vc$}
\label{subsection-properties-of-the-decomposition}
Recall the local approximation properties of the weighted Cl\'ement-type interpolation operator $I_H$ defined in \eqref{def-weighted-clement},
\begin{align}\label{e:errorclement}
H_T^{-1} \| v-I_H(v) \|_{L^2(T)} + \| \nabla(v-I_H(v)) \|_{L^2(T)} \le C_{I_H} \| \nabla v \|_{L^2(\omega_T)}
\end{align}
for all $v \in H^1_0(\Omega)$. Here, $C_{I_H}$ is a generic constant that depends only on interior angles of $\T_H$ but not on the local mesh size and $\omega_T := \bigcup \{ S \in \mathcal{T}_H| \hspace{2pt} \overline{S} \cap \overline{T} \neq \emptyset \}$.
Furthermore, for all $v \in H^1_0(\Omega)$ and for all $z \in \mathcal{N}_H$ it holds
\begin{align}
\label{local-estimate-clement} \int_{\omega_z} (v-v_z)^2 \dx \le C_{I_H} H^2 \| \nabla v \|_{L^2(\omega_z)}^2,
\end{align}
where $\omega_z := \support(\Phi_z)$ and $v_z$ is given by \eqref{def-weighted-clement}.
\begin{lemma}[Properties of the decomposition]
$\\$
The decomposition of $V_h$ into $V_H$ and $\Vf$ (stated in Section~\ref{subsection-two-grid-discretization})
is $L^2$-orthogonal, i.e.,
\begin{align}
\label{L2-orthogonality}V_h = V_H \oplus \Vf \quad \mbox{and} \quad (v_H,v^\f)_{L^2(\Omega)}=0 \enspace \mbox{for all} \enspace v_H\in V_H, \enspace v^\f \in \Vf.
\end{align}
The decomposition of $V_h$ in $\Vc$ and $\Vf$ is $a$-orthogonal
\begin{align}
\label{H1-orthogonality}V_h = \Vc \oplus \Vf \quad \mbox{and} \quad a(v^\cs,v^\f)=0 \enspace \mbox{for all} \enspace v^\cs\in \Vc, \enspace v^\f \in \Vf
\end{align}
and $L^2$-quasi-orthogonal in the sense that
\begin{align}
\label{L2-quasi-orthogonality}(v^\cs,v^\f)_{L^2(\Omega)} \lesssim H^2 \| \nabla v^\cs \|_{L^2(\Omega)} \| \nabla v^\f \|_{L^2(\Omega)}.
\end{align}
\end{lemma}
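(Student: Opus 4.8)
The plan is to prove the three assertions of the lemma in order, starting from the definitions of the spaces $\Vf$, $V_H$, and $\Vc$ together with the approximation properties \eqref{e:errorclement}--\eqref{local-estimate-clement} of the Cl\'ement interpolation $I_H$.

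For the $L^2$-orthogonality \eqref{L2-orthogonality}, first I would establish the algebraic direct-sum decomposition $V_h = V_H \oplus \Vf$. Every $v_h \in V_h$ can be written as $v_h = I_H(v_h) + (v_h - I_H(v_h))$, where $I_H(v_h) \in V_H$ and $v_h - I_H(v_h) \in \kernel(I_H|_{V_h}) = \Vf$ since $I_H$ is a projection onto $V_H$ (note $I_H$ restricted to $V_H$ is the identity, so $I_H(v_h - I_H(v_h)) = I_H(v_h) - I_H(v_h) = 0$). The intersection $V_H \cap \Vf$ is trivial because any $v_H \in V_H$ with $I_H(v_H) = 0$ must be zero. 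For the actual $L^2$-orthogonality, the key is the specific weighted definition of $I_H$: for $v^\f \in \Vf$ we have $v^\f_z = (v^\f,\Phi_z)_{L^2(\Omega)}/(1,\Phi_z)_{L^2(\Omega)} = 0$ for every $z$, which means $(v^\f,\Phi_z)_{L^2(\Omega)} = 0$ for all nodal basis functions $\Phi_z$. Since the $(\Phi_z)_{z\in\mathcal{N}_H}$ span $V_H$, this gives $(v_H, v^\f)_{L^2(\Omega)} = 0$ for all $v_H \in V_H$, which is exactly \eqref{L2-orthogonality}. This is the crucial place where the weighted choice of interpolation pays off.

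The $a$-orthogonal decomposition \eqref{H1-orthogonality} is essentially built into the definitions: $\Vf$ and $\Vc = \Pc V_H = (1-\Pf)V_H$ are $a$-orthogonal by construction, since $\Pf$ is the $a$-orthogonal projection onto $\Vf$, so for any $v_H \in V_H$ and $v^\f \in \Vf$ we have $a(\Pc v_H, v^\f) = a(v_H, v^\f) - a(\Pf v_H, v^\f) = 0$ by definition of $\Pf$. The splitting $V_h = \Vc \oplus \Vf$ then follows from dimension counting, using $\dimension \Vc = \dimension V_H$ and $V_h = V_H \oplus \Vf$.

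The main obstacle is the $L^2$-quasi-orthogonality \eqref{L2-quasi-orthogonality}, which requires genuine estimation rather than algebra. The plan is to exploit that $v^\f \in \Vf$ is $L^2$-orthogonal to $V_H$ (already shown), so in particular $(v^\f, I_H(v^\cs))_{L^2(\Omega)} = 0$, which lets me subtract the interpolant freely:
\begin{align*}
(v^\cs, v^\f)_{L^2(\Omega)} = (v^\cs - I_H(v^\cs), v^\f)_{L^2(\Omega)}.
\end{align*}
I would then use that $v^\f$ itself is $L^2$-orthogonal to the piecewise constants generated by the weights, namely $(v^\f, \Phi_z)_{L^2} = 0$, to insert the local averages $v^\f_z$ and write the integral as a sum over patches $\omega_z$ of terms like $(v^\cs - I_H(v^\cs), v^\f - v^\f_z)$. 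Applying Cauchy-Schwarz locally together with the local error bound \eqref{e:errorclement} for the factor $\|v^\cs - I_H(v^\cs)\|_{L^2}\lesssim H\|\nabla v^\cs\|_{L^2}$ and the Poincar\'e-type estimate \eqref{local-estimate-clement} for $\|v^\f - v^\f_z\|_{L^2(\omega_z)} \lesssim H \|\nabla v^\f\|_{L^2(\omega_z)}$ produces the two powers of $H$. Summing over the nodes and using that the patch overlap is bounded by a constant depending only on the interior angles (finite overlap property) collapses the local bounds into the global product $H^2 \|\nabla v^\cs\|_{L^2(\Omega)} \|\nabla v^\f\|_{L^2(\Omega)}$ via a final Cauchy-Schwarz in the node-indexed sums. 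The delicate point is organizing the double use of $L^2$-orthogonality (against $I_H(v^\cs)$ globally and against the local weights $\Phi_z$ patchwise) so that both the coarse factor and the fine factor each contribute one power of $H$.
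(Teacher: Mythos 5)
The paper itself offers no argument for this lemma --- it states that the proof is verbatim the one in \cite{Malqvist:Peterseim:2012} --- so your self-contained write-up is doing genuinely more work than the source. Most of it is sound: the observation that $I_H(v^\f)=0$ forces $(v^\f,\Phi_z)_{L^2(\Omega)}=0$ for every $z$ and hence $v^\f\perp_{L^2}V_H$ is exactly the right use of the weighted coefficients; the $a$-orthogonality of $\Vc$ and $\Vf$ is indeed immediate from the definition of $\Pf$; and your quasi-orthogonality argument (one power of $H$ from $\|v^\cs-I_H v^\cs\|_{L^2}$, one from $\|v^\f-v^\f_z\|_{L^2(\omega_z)}$ with $v^\f_z=0$, then finite overlap) does deliver \eqref{L2-quasi-orthogonality}. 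A cleaner packaging of the same idea is $(v^\cs,v^\f)_{L^2(\Omega)}=((1-I_H)v^\cs,(1-I_H)v^\f)_{L^2(\Omega)}$ followed by two applications of \eqref{e:errorclement}; this avoids the partition-of-unity bookkeeping near $\partial\Omega$, where $\sum_{z\in\mathcal{N}_H}\Phi_z\neq 1$.

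The genuine error is the claim that $I_H$ is a projection, i.e.\ that $I_H$ restricted to $V_H$ is the identity. It is not: $I_H(\Phi_y)=\sum_z\bigl((\Phi_y,\Phi_z)_{L^2(\Omega)}/(1,\Phi_z)_{L^2(\Omega)}\bigr)\Phi_z$ has strictly positive coefficients at every neighbour $z$ of $y$, so $I_H(\Phi_y)\neq\Phi_y$. Consequently your splitting $v_h=I_H(v_h)+(v_h-I_H(v_h))$ does not realize the decomposition, since $I_H(v_h-I_H(v_h))=I_H(v_h)-I_H(I_H(v_h))\neq 0$ in general, and your justification of $V_H\cap\Vf=\{0\}$ rests on the same false identity. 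Both conclusions survive under a different argument: $V_H\cap\Vf=\{0\}$ follows from the $L^2$-orthogonality you already proved (an element of the intersection is $L^2$-orthogonal to itself), or equivalently from the positive definiteness of the mass matrix of $V_H$; then rank--nullity applied to $I_H\vert_{V_h}:V_h\rightarrow V_H$ gives $\dimension\Vf\geq\dimension V_h-\dimension V_H$, which together with $\dimension V_H+\dimension\Vf\leq\dimension V_h$ (from the trivial intersection) yields $V_h=V_H\oplus\Vf$. The final dimension count for $V_h=\Vc\oplus\Vf$ uses $V_h=V_H\oplus\Vf$ and therefore inherits this repair.
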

\begin{proof} 
The proof is verbatim the same as in \cite{Malqvist:Peterseim:2012}.
\end{proof}

The following lemma estimates the error of the best-approximation in the modified coarse space $\Vc$. The lemma is also implicitly required each time that we use the abstract error estimates stated in \cite[Theorem 1]{Cances:Chakir:Maday:2010}. These estimates require a family of finite-dimensional spaces that is dense in $H^1_0(\Omega)$. This density property is implied by the following lemma.

\begin{lemma}[Approximation property of $\Vc$]
\label{density}
For any given $v \in H^1_0(\Omega)$ with $\ddiv A\nabla v\in L^2(\Omega)$ it holds
\begin{align*}
\inf_{v^\cs_H \in \Vc} \|v - v^\cs_H \|_{H^1(\Omega)} \lesssim H\|\ddiv A\nabla v+bv\|_{L^2(\Omega)}+\inf_{v_h \in V_h} \|v - v_h \|_{H^1(\Omega)}.
\end{align*}
\end{lemma}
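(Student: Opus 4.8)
The plan is to bound the best-approximation error from above by a single well-chosen candidate in $\Vc$ and to exploit the defining kernel property of the fine space $\Vf$. Let $v_h\in V_h$ be arbitrary and set $v_H^\cs:=\Pc v_h=v_h-\Pf v_h$. Since $\Pf v_h\in\Vf$ and $a(\Pc v_h,\phi)=a(v_h,\phi)-a(\Pf v_h,\phi)=0$ for all $\phi\in\Vf$, the element $v_H^\cs$ indeed lies in the $a$-orthogonal complement $\Vc$ of $\Vf$ in $V_h$. Writing $v-v_H^\cs=(v-v_h)+\Pf v_h$ and using the triangle inequality in the energy norm gives $\tnorm{v-v_H^\cs}\le\tnorm{v-v_h}+\tnorm{\Pf v_h}$, so it remains to estimate the corrector $\tnorm{\Pf v_h}$.

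For the corrector, abbreviate $w:=\Pf v_h\in\Vf$. The $a$-orthogonal projection property yields $\tnorm{w}^2=a(\Pf v_h,w)=a(v_h,w)=a(v,w)+a(v_h-v,w)$. The second term is controlled by Cauchy--Schwarz as $a(v_h-v,w)\le\tnorm{v-v_h}\,\tnorm{w}$. For the first term I would integrate by parts: since $\ddiv A\nabla v\in L^2(\Omega)$ and $w\in H^1_0(\Omega)$, the boundary contribution vanishes and
\begin{equation*}
 a(v,w)=\int_\Omega\big({-}\ddiv A\nabla v+bv\big)\,w\dx\le\|{-}\ddiv A\nabla v+bv\|_{L^2(\Omega)}\,\|w\|_{L^2(\Omega)},
\end{equation*}
so that the $L^2$-norm of the elliptic residual on the right-hand side of the lemma emerges naturally.

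The crux is to convert the factor $\|w\|_{L^2(\Omega)}$ into $H$ times an energy norm. Here the definition $\Vf=\operatorname{kernel}(I_H\vert_{V_h})$ enters decisively: because $w\in\Vf$ we have $I_H w=0$, hence $w=w-I_H w$, and the local interpolation estimate \eqref{e:errorclement}, summed over all $T\in\T_H$ with the finite-overlap property of the patches $\omega_T$, gives $\|w\|_{L^2(\Omega)}=\|w-I_H w\|_{L^2(\Omega)}\lesssim H\|\nabla w\|_{L^2(\Omega)}\lesssim H\tnorm{w}$, the last step using the uniform spectral bound \eqref{e:spectralbound}. Substituting this back yields $\tnorm{w}^2\lesssim\big(H\|{-}\ddiv A\nabla v+bv\|_{L^2(\Omega)}+\tnorm{v-v_h}\big)\tnorm{w}$, and dividing by $\tnorm{w}$ bounds the corrector. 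This $L^2$-to-energy gain for functions in $\Vf$ is the only genuinely delicate point; I expect it to be the main obstacle, although once the kernel identity $I_H w=0$ is combined with the Cl\'ement estimate the factor $H$ is unconditional and, crucially, requires no extra regularity of $w$, which is precisely what keeps the coarse space robust.

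Collecting the two contributions gives $\tnorm{v-v_H^\cs}\lesssim H\|{-}\ddiv A\nabla v+bv\|_{L^2(\Omega)}+\tnorm{v-v_h}$. Since $v_H^\cs\in\Vc$ is a valid competitor for the infimum on the left and $v_h\in V_h$ was arbitrary, taking the infimum over $v_h$ on the right and recalling that the energy norm $\tnorm{\cdot}=\sqrt{a(\cdot,\cdot)}$ is equivalent to the standard $H^1(\Omega)$-norm (Section~\ref{subsection-two-grid-discretization}) turns this into the claimed estimate.
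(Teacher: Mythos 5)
Your proof is correct and follows essentially the same route as the paper: the candidate is $\Pc v_h$, and the factor $H$ is gained exactly as in the paper from the kernel property $I_H(\Pf v_h)=0$ combined with the Cl\'ement estimate \eqref{e:errorclement}. The only (harmless) difference is that the paper fixes $v_h$ to be the Galerkin approximation of the auxiliary linear problem with right-hand side $f_v$ (so that $a(v_h,\Pf v_h)=(f_v,\Pf v_h)$ exactly, with C\'ea's lemma then relating $\tnorm{v-v_h}$ to the best approximation), whereas you keep $v_h$ arbitrary, integrate by parts against the continuous $v$, and absorb the extra term $a(v_h-v,\Pf v_h)$ by Cauchy--Schwarz; your sign $-\ddiv A\nabla v+bv$ for the residual is the correct one and matches the paper's intent.
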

\begin{proof}
Given $v$, define $f_v:= \ddiv A \nabla v + b v \in L^2(\Omega)$ (since $v\in L^{\infty}(\Omega)$) and
let $v_h \in V_h$ denote the corresponding finite element approximation, i.e.,
\begin{align*}
a(v_h, \phi_h) = (f_v, \phi_h)_{L^2(\Omega)} \qquad \mbox{for all } \phi_h \in V_h.
\end{align*}
With $v_H^\cs:=\Pc v_h \in \Vc$, Galerkin-orthogonality leads to 
\begin{align*}
\|A^{1/2} \nabla (v_h - v_{H}^\cs)\|_{L^2(\Omega)}^2 &\overset{\eqref{H1-orthogonality}}{\leq} a( v_h, \Pf v_h )=(f_v,\Pf v_h)_{L^2(\Omega)} \\
&\overset{\eqref{e:errorclement}}{\lesssim} \gmin^{-1/2}  \|H f_v \|_{L^2(\Omega)} \|A^{1/2} \nabla(v_h - v_{H}^\cs) \|_{L^2(\Omega)}.
\end{align*}
This, the triangle inequality, and norm equivalences readily yield the assertion.\end{proof}

Next, we show that there exists an element $u^\cs= \Pc u_h$ in the space $\Vc$ that approximates $u_h$ in the energy-norm with an accuracy of order O$(H^2)$. 
\begin{lemma}[Stability and approximability of the reference solution]$\\$
\label{corollary-properties-eigenvalue-decompose}
Let $(u_h,\lambda_h)\in V_h\times\R$ solve Problem~\ref{discrete-problem-h}. Then it holds
\begin{align*}
\tnorm{\Pc u_h} &\le \sqrt{\lambda_h}, \\
\tnorm{\Pc u_h - u_h} = \tnorm{\Pf u_h} &\lesssim H^2 + H \| u- u_h \|_{H^1(\Omega)},  \\
( \Pc u_h,\Pf u_h)_{L^2(\Omega)} &\lesssim \left( H^2 + H \| u- u_h \|_{H^1(\Omega)} \right) H^2.
\end{align*}
\end{lemma}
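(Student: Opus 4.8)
The plan is to treat the three claims in order, exploiting the $a$-orthogonality \eqref{H1-orthogonality}, the stability bound \eqref{u-energy}, and the $L^2$-quasi-orthogonality \eqref{L2-quasi-orthogonality} established earlier. The first bound is the easiest: since $u_h = \Pc u_h + \Pf u_h$ is an $a$-orthogonal splitting, Pythagoras in the energy norm gives $\tnorm{\Pc u_h}^2 \leq \tnorm{\Pc u_h}^2 + \tnorm{\Pf u_h}^2 = \tnorm{u_h}^2 = a(u_h,u_h)$, and by the definition of $\lambda_h$ together with $\beta\geq 0$ one has $a(u_h,u_h) \leq \lambda_h$. This immediately yields $\tnorm{\Pc u_h}\le\sqrt{\lambda_h}$.

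The main work is the second estimate, which controls the fine-scale component $\Pf u_h = u_h - \Pc u_h$. First I would note the equality $\tnorm{\Pc u_h - u_h} = \tnorm{\Pf u_h}$ is just the definition $\Pc = 1-\Pf$. To bound $\tnorm{\Pf u_h}$, I would test the eigenvalue equation satisfied by $u_h$ against $\Pf u_h \in \Vf$. Concretely, $a(u_h, \Pf u_h) = \lambda_h (u_h, \Pf u_h)_{L^2(\Omega)} - \beta(|u_h|^2 u_h, \Pf u_h)_{L^2(\Omega)}$, and since $\Pf u_h$ is in the kernel of $I_H$ one has $a(\Pf u_h, \Pf u_h) = a(u_h,\Pf u_h)$. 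The key is that because $\Pf u_h\in\kernel(I_H|_{V_h})$, the interpolation error estimate \eqref{e:errorclement} gives $\|\Pf u_h\|_{L^2(\Omega)} \lesssim H\,\tnorm{\Pf u_h}$. Applying this to the right-hand side, using the $L^4$-bound \eqref{u-L-4-estimate} and the Sobolev embedding $H^1\hookrightarrow L^6$ to control the cubic term, produces $\tnorm{\Pf u_h}^2 \lesssim H\,\tnorm{\Pf u_h}\,(\lambda_h + \|u_h\|_{L^6}^3)$, hence $\tnorm{\Pf u_h}\lesssim H$ after dividing. The sharper $H^2 + H\|u-u_h\|_{H^1(\Omega)}$ requires exploiting a cancellation: the leading-order term $\lambda_h(u_h,\Pf u_h)_{L^2} - \beta(|u_h|^2u_h,\Pf u_h)_{L^2}$ is the $L^2$-pairing of $\Pf u_h$ with the residual $f_h := \lambda_h u_h - \beta|u_h|^2 u_h$. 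I would insert the exact solution by writing $f_h = (\lambda u - \beta|u|^2 u) + (\text{lower-order differences})$; the exact residual $\lambda u - \beta|u|^2 u = -\ddiv A\nabla u + bu$ lies in $H^1$-type regularity, so one gains an extra factor of $H$ from \eqref{e:errorclement} on that piece, while the difference terms are controlled by $\|u-u_h\|_{H^1(\Omega)}$ and contribute the $H\|u-u_h\|_{H^1(\Omega)}$ term.

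The third estimate is then almost immediate from the $L^2$-quasi-orthogonality \eqref{L2-quasi-orthogonality}, which says $(\Pc u_h, \Pf u_h)_{L^2(\Omega)} \lesssim H^2\,\tnorm{\Pc u_h}\,\tnorm{\Pf u_h}$ (up to the norm equivalence between $\tnorm{\cdot}$ and $\|\nabla\cdot\|_{L^2(\Omega)}$). Substituting the first bound $\tnorm{\Pc u_h}\lesssim 1$ and the second bound $\tnorm{\Pf u_h}\lesssim H^2 + H\|u-u_h\|_{H^1(\Omega)}$ gives exactly $(H^2 + H\|u-u_h\|_{H^1(\Omega)})H^2$. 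The hard part is the gain of the extra power of $H$ in the second estimate: naively testing the equation only yields order $H$, and recovering $H^2$ hinges on recognizing that the residual of the \emph{exact} eigenpair is smoother (lives in the right space for the Clément estimate to gain a second factor of $H$) and that the $\beta$-cubic term can be handled by the uniform $L^\infty$- and $L^4$-bounds \eqref{l-infty-estimate} and \eqref{u-L-4-estimate}. I would expect the bookkeeping of these nonlinear difference terms to be the most delicate point.
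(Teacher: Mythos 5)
Your proposal is correct and follows essentially the same route as the paper: Pythagoras in the energy norm for the first bound; testing the discrete Euler--Lagrange equation with $\Pf u_h$ and gaining $H^2$ through the $L^2$-orthogonality of the splitting combined with a double application of the Cl\'ement estimate (using $u,\,u^3\in H^1_0(\Omega)$, which needs $u\in L^\infty(\Omega)$, for the smooth part and $L^6$-embeddings for the difference $u_h^3-u^3$); and the $L^2$-quasi-orthogonality \eqref{L2-quasi-orthogonality} for the third bound. The only cosmetic difference is that the paper keeps $\lambda_h(u_h,\Pf u_h)_{L^2(\Omega)}$ intact (applying the double Cl\'ement argument directly to $u_h$) and inserts the exact solution only in the cubic term, whereas you insert the full exact residual $\lambda u-\beta|u|^2u$, which additionally requires the (available, non-circular) bound $|\lambda-\lambda_h|\lesssim\|u-u_h\|_{H^1(\Omega)}$ to absorb the term $(\lambda_h-\lambda)(u,\Pf u_h)_{L^2(\Omega)}$.
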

\begin{proof}
Recall $\|\cdot\|_{H^1(\Omega)}:=\sqrt{a(\cdot,\cdot)}$. Since $\Pc$ is a projection, we have 
\begin{align*}
 \tnorm{\Pc u_h}^2\le \tnorm{u_h}^2 = \lambda_h \| u_h \|^2_{L^2(\Omega)} - \beta \| u_h \|^4_{L^4(\Omega)} \le \lambda_h.
\end{align*}
The $a$-orthogonality of \eqref{e:decompositionc} further yields
\begin{multline}
\label{bound-for-energy-uf}\tnorm{\Pf u_h}^2 = a( \Pf u_h,\Pf u_h) = a( u_h,\Pf u_h)\\
= \lambda_h ( u_h, (1-I_H)\Pf u_h )_{L^2(\Omega)} - \beta ( u^3 , \Pf u_h )_{L^2(\Omega)} - \beta ( u_h^3 - u^3 , \Pf u_h )_{L^2(\Omega)}.
\end{multline}
The first term on the right-hand side of \eqref{bound-for-energy-uf} can be bounded using $I_H(\Pf u_h) =0$, the $L^2$-orthogonality \eqref{L2-orthogonality}, and the estimates for the weighted Cl\'{e}ment interpolation operator \eqref{e:errorclement}
\begin{multline}\label{bound-for-energy-uf2}
\lambda_h ( u_h, (1-I_H)\Pf u_h )_{L^2(\Omega)}  = \lambda_h ( (1-I_H)u_h , (1-I_H)\Pf u_h )_{L^2(\Omega)}\\ \lesssim \lambda_h H^2 \tnorm{u_h} \tnorm{\Pf u_h}.
\end{multline}
Since $u\in L^{\infty}(\Omega)$ we have $\nabla (u^3) = 3 u^2 \nabla u \in L^2(\Omega)$ and, hence, the second term on the right hand side of \eqref{bound-for-energy-uf} can be bounded as follows, 
\begin{multline}
\beta ( u^3, \Pf u_h) = \beta ( (1-I_H)u^3, (1-I_H)\Pf u_h)
\overset{\eqref{local-estimate-clement}}{\lesssim} H^2 \| u \|^2_{L^{\infty}(\Omega)} \tnorm{u}\tnorm{\Pf u} \\\overset{\eqref{l-infty-estimate}}{\lesssim} H^2 \tnorm{u}\tnorm{\Pf u}.\label{bound-for-energy-uf3}
\end{multline}
Since $u_h^3-u^3=(u_h^2+u_h u+ u^2)(u_h-u)$, the third term on the right hand side of \eqref{bound-for-energy-uf} can be estimated by
\begin{multline}
\beta ( u_h^3 - u^3 , \Pf u_h )_{L^2(\Omega)} \lesssim \| |u| + |u_h| \|_{L^6(\Omega)}^2 \|u_h-u\|_{L^6(\Omega)} \| (1-I_H)\Pf u_h \|_{L^2(\Omega)} \\
\lesssim H \| u- u_h \|_{H^1(\Omega)}\tnorm{\Pf u_h},\label{bound-for-energy-uf4}
\end{multline}
where we used (\ref{u-energy}) and the embedding $\| |u| + |u_h| \|_{L^6(\Omega)} \lesssim \| u \|_{H^1(\Omega)} + \| u_h \|_{H^1(\Omega)}$.

The combination of \eqref{bound-for-energy-uf}--\eqref{bound-for-energy-uf4} readily yields
\begin{align*}
\tnorm{\Pf u_h} &\lesssim H^2 + \|u-u_h\|_{H^1(\Omega)}^2.
\end{align*}
The third assertion follows from the previous ones  and
\begin{multline*}
( \Pc u_h,\Pf u_h)_{L^2(\Omega)} = ((1-I_H)\Pc u_h,(1-I_H)\Pf u_h)_{L^2(\Omega)} \\\lesssim H^2 \tnorm{\Pc u_h} \tnorm{\Pf u_h}. 
\end{multline*}
\end{proof}

\subsection{Proof of Theorem~\ref{main-result-pre}}\label{proof-main-result-pre}

We split the proof into two parts: the estimate for the $H^1$-error and the estimate for the $L^2$-error.
\subsubsection{Proof of the $H^1$ error estimate \eqref{final-H1-estimate}}
We proceed similarly as in \cite{Cances:Chakir:Maday:2010}. The proof is divided into four steps. In the first step, we derive an identical formulation of some energy difference. The identity is used in step two to establish the inequality $\|u^{\cs}_h - u \|_{H^1(\Omega)}^2\lesssim E(u^{\cs}_h) - E(u)$. Since $u^{\cs}_h$ is a minimizer, we can replace $E(u^{\cs}_h)$ by $E(w^{\cs}_h)$ in the estimate for an arbitrary $L^2$-normalized $w^{\cs}_h\in \Vc$. In step three, we choose $w^{\cs}_h:=\frac{\Pc u_h}{\|\Pc u_h\|_{L^2(\Omega)}}$ and show that the perturbation introduced via normalization is of high order ($\approx H^3$). In step four, we use step three to estimate $E(w^{\cs}_h) - E(u)$.

$\\$
{\it Step 1.} Given some arbitrary $w \in H^1_0(\Omega)$ with $\|w\|_{L^2(\Omega)}=1$, we show that
\begin{equation}
\label{estimate-for-energies-proof-3} 
\begin{aligned}
 E(w) - E( u ) &= \frac{1}{2} a(w - u,w - u) + \frac{\beta}{2} (| u |^2 ( w - u ), w - u )_{L^2(\Omega)} \\
& \quad + \frac{\beta}{4} (( |u|^4 - 2 |u|^2 |w|^2 + |w|^4, 1 )_{L^2(\Omega)} - \frac{1}{2} \lambda \| w - u \|_{L^2(\Omega)}^2.
\end{aligned}
\end{equation}
First, using $\|u\|_{L^2(\Omega)}=\|w\|_{L^2(\Omega)}=1$ we get
\begin{align}
\nonumber\lambda ( u - w , u - w )_{L^2(\Omega)} &= \lambda \| u \|_{L^2(\Omega)}^2 - 2 \lambda (u,w)_{L^2(\Omega)} + \lambda \| u \|_{L^2(\Omega)}^2 \\
 \nonumber&= - 2 \lambda (u, w - u )_{L^2(\Omega)} \\
\label{proof-main-result-pre:step-1} &= - 2 a(u, w - u) - 2 \beta (|u|^2 u, w - u )_{L^2(\Omega)}.
\end{align}
This yields
\begin{eqnarray*}
\lefteqn{a(w,w) + \beta (|u|^2 w, w)_{L^2(\Omega)} - a(u,u) - \beta (|u|^2 u, u )_{L^2(\Omega)}}\\
&\overset{\eqref{proof-main-result-pre:step-1}}{=}& a(w,w) - 2 a(u, w) + a(u,u) \\
&\enspace& \quad + \beta (|u|^2  w, w)_{L^2(\Omega)} - 2 \beta (|u|^2 u, w)_{L^2(\Omega)} + \beta (|u|^2 u, u)_{L^2(\Omega)} \\
&\enspace& \quad - \lambda ( w - u , w - u )_{L^2(\Omega)}\\
&=& a(w - u,w - u) + \beta (|u|^2 (w - u), w - u )_{L^2(\Omega)} - \lambda \|w - u\|_{L^2(\Omega)}^2.
\end{eqnarray*}
Plugging this last equality into the equation
\begin{eqnarray*}
\nonumber\lefteqn{2 E(w) - 2 E( u )} \\
&=& a(w,w) + \frac{\beta}{2} (|w|^2 w,w )_{L^2(\Omega)} - a(u,u) - \frac{\beta}{2} (|u|^2 u,u )_{L^2(\Omega)}.
 \end{eqnarray*}
leads to \eqref{estimate-for-energies-proof-3}.

$\\$
{\it Step 2.}
Using \eqref{estimate-for-energies-proof-3} with $w=u^{\cs}_h$ and the fact that there exists some $c_0$ (independent of $H$ and $h$) such that $a(u-u^{\cs}_h,u-u^{\cs}_h)+((\beta|u|^2 - \lambda)(u-u^{\cs}_h), u-u^{\cs}_h )_{L^2(\Omega)} \ge c_0 \| u-u^{\cs}_h \|_{H^1(\Omega)}^2$ (c.f. \cite[Lemma 1]{Cances:Chakir:Maday:2010}), we get
\begin{eqnarray*}
\lefteqn{ E(u^{\cs}_h) - E( u )} \\
 &=& \frac{1}{2} a(u^{\cs}_h - u,u^{\cs}_h - u) + \frac{\beta}{2} (| u |^2 ( u^{\cs}_h - u ), u^{\cs}_h - u )_{L^2(\Omega)} \\
 &\enspace& \quad + \frac{\beta}{4} (( |u|^4 - 2 |u|^2 |u^{\cs}_h|^2 + |u^{\cs}_h|^4, 1 )_{L^2(\Omega)} - \frac{1}{2} \lambda \| u^{\cs}_h - u \|_{L^2(\Omega)}^2 \\
 &\ge& \frac{c_0}{2} \|u^{\cs}_h - u \|_{H^1(\Omega)}^2 + \frac{\beta}{4} \| |u|^2 - |u^{\cs}_h|^2 \|_{L^2(\Omega)}^2.
 \end{eqnarray*}
 {\it Step 3.}
Using the result of step two yields
\begin{align*}
\|u^{\cs}_h - u \|_{H^1(\Omega)}^2 \lesssim E(u^{\cs}_h) - E( u ) \le E(w^{\cs}_h) - E( u )
\end{align*}
for any $L^2$-normalized $w^{\cs}_h \in \Vc$. We choose $w^{\cs}_h:=\frac{\Pc u_h}{\|\Pc u_h\|_{L^2(\Omega)}}$ and observe that we get, with Lemma~\ref{corollary-properties-eigenvalue-decompose}, that
\begin{align}
\label{estimate-for-energies-proof-1}\nonumber\| \Pc u_h - w^{\cs}_h\|_{L^2(\Omega)} &=  \left|1 - \| \Pc u_h \|_{L^2(\Omega)} \right| \le \| \Pf u_h \|_{L^2(\Omega)} = \| \Pf u_h - I_H(\Pf u_h) \|_{L^2(\Omega)} \\
&\lesssim  H \| \Pf u_h \|_{H^1(\Omega)} \lesssim H \| u - u_h \|_{H^1(\Omega)}^2 + H^3
\end{align}
and consequently
\begin{align}
\label{estimate-for-energies-proof-2}\| \Pc u_h - w^{\cs}_h\|_{H^1(\Omega)} &= \frac{ \left|1 - \| \Pc u_h \|_{L^2(\Omega)} \right|}{\| \Pc u_h \|_{L^2(\Omega)}}  \| \Pc u_h \|_{H^1(\Omega)} \lesssim H \| u - u_h \|_{H^1(\Omega)}^2 + H^3,
\end{align}
where we used $\| u - u_h \|_{H^1(\Omega)} \lesssim 1$ (implying $\| \Pc u_h \|_{H^1(\Omega)} \lesssim 1$ and $\| \Pc u_h \|_{L^2(\Omega)} \gtrsim 1$).

$\\$
{\it Step 4.} Using again \eqref{estimate-for-energies-proof-3} leads to 
\begin{eqnarray*}
\lefteqn{2 E(w^{\cs}_h) - 2 E( u )} \\
&=& \|w^{\cs}_h - u\|_{H^1(\Omega)}^2 + \beta (| u |^2 ( w^{\cs}_h - u ), w^{\cs}_h - u )_{L^2(\Omega)} \\
&\enspace& \quad + \frac{\beta}{2} ( |u|^4 - 2 |u|^2 |w^{\cs}_h|^2 + |w^{\cs}_h|^4, 1 )_{L^2(\Omega)} -  \lambda \| w^{\cs}_h - u \|_{L^2(\Omega)}^2.
 \end{eqnarray*}
The H\"older-inequality
\begin{align}
\label{generalized-hoelder-for-step4}(|u|^2,|u - w^{\cs}_h|^2)_{L^2(\Omega)} \le \| u \|_{L^6(\Omega)}^2 \| u - w^{\cs}_h \|_{L^2(\Omega)} \| u - w^{\cs}_h \|_{L^6(\Omega)}
\end{align}
yields the estimate
\begin{eqnarray*}
 \lefteqn{\beta (| u |^2 ( w^{\cs}_h - u ), w^{\cs}_h - u )_{L^2(\Omega)} + \frac{\beta}{2} \int_{\Omega} \left( |u|^2 - |w^{\cs}_h|^2 \right)^2 \dx} \\
  &\overset{\eqref{generalized-hoelder-for-step4}}{\le}&\beta \|u\|_{L^6(\Omega)}^2 \| u - w^{\cs}_h \|_{L^2(\Omega)}  \| u - w^{\cs}_h \|_{L^6(\Omega)} + \frac{\beta}{2} ( ( |u| + |w^{\cs}_h| )^2, |u - w^{\cs}_h|^2 )_{L^2(\Omega)}\\
  &\overset{\eqref{generalized-hoelder-for-step4}}{\le}& \beta (2 \|u\|_{L^6(\Omega)}^2 + \|w^{\cs}_h\|_{L^6(\Omega)}^2 ) \| u - w^{\cs}_h \|_{L^2(\Omega)}  \| u - w^{\cs}_h \|_{L^6(\Omega)}\\
  &\lesssim& \| u - w^{\cs}_h \|_{L^2(\Omega)}^2 + \| u - w^{\cs}_h \|_{H^1(\Omega)}^2,
 \end{eqnarray*}
for the terms involving $\beta$. The combination of the previous results with Lemma~\ref{corollary-properties-eigenvalue-decompose} and estimates \eqref{estimate-for-energies-proof-1} and \eqref{estimate-for-energies-proof-2} gives us
\begin{align*}
\| u^{\cs}_h - u \|_{H^1(\Omega)}^2 \lesssim E(u^{\cs}_h) - E( u ) &\le E(w^{\cs}_h) - E( u ) \lesssim \| w^{\cs}_h - u \|^2_{H^1(\Omega)} \\
&\lesssim \| u - \Pc u_h \|^2_{H^1(\Omega)} + \|  \Pc u_h - w^{\cs}_h  \|^2_{H^1(\Omega)}\\
&\lesssim \left( \| u - u_h \|_{H^1(\Omega)} + H^2 \right)^2.
\end{align*}

\subsubsection{Proof of the $L^2$ error estimate \eqref{final-L2-estimate}}

In the following, we let the bilinear form $c_{\lambda,u}: H^1_0(\Omega) \times H^1_0(\Omega) \rightarrow \mathbb{R}$ be given by
\begin{align*}
c_{\lambda,u}(v,w):= \int_{\Omega} A \nabla v \cdot \nabla w + b v w + 3 \beta |u|^2 v w \dx - \lambda \int_{\Omega} v w \dx
\end{align*}
and we define the space
\begin{align*}
V_{u}^{\perp} := \{ v \in H^1_0(\Omega)| \hspace{2pt} (v,u)_{L^2(\Omega)}=0\}.
\end{align*}
For $w \in H^1_0(\Omega)$ we let $\psi_w \in \uorth$ denote the unique solution (see Lemma~\ref{cances-chakir-maday-theorem} below) of
\begin{align}
\label{adjoint-problem-for-L2}c_{\lambda,u}(\psi_{w},v_{\perp} ) = (w,v_{\perp})_{L^2(\Omega)} \quad \mbox{for all} \enspace v_{\perp} \in \uorth.
\end{align}
The subsequent lemma applies the abstract $L^2$-error estimate, obtained by Canc\`es, Chakir, Maday \cite[Lemma 1, Theorem 1, and Remark 2]{Cances:Chakir:Maday:2010}, to our setting. Observe that Lemma \ref{density} (i.e. $\Vc$ represents a dense family of finite dimensional subspaces of $H^1$) is required to apply these results.  
\begin{lemma}[Abstract approximation \cite{Cances:Chakir:Maday:2010}]
\label{cances-chakir-maday-theorem}
Let $h$ be sufficiently small, then
\begin{align}
\label{eigenvalue-estimate-ccm}|\lambda- \lambda_H^{\cs}| &\lesssim \| u - u_H^{\cs} \|_{H^1(\Omega)}^2  + \| u - u_H^{\cs} \|_{L^2(\Omega)}
\end{align}
and \begin{align}
\label{abstract-L2-estimate} \| u - u_H^{\cs} \|_{L^2(\Omega)}^2 &\lesssim \| u - u_H^{\cs} \|_{H^1(\Omega)}  \inf_{\psi \in \Vc} \| \psi_{u_H^{\cs} -u} - \psi \|_{H^1(\Omega)}.
\end{align}
Furthermore, the bilinear form $c_{\lambda,u}(\cdot,\cdot)$ is a scalar product in $H^1_0(\Omega)$ and induces a norm that is equivalent to the standard $H^1$-norm.
\end{lemma}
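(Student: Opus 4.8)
All three assertions rest on one structural fact: once $c_{\lambda,u}$ is known to be an $H^1$-equivalent scalar product, the two estimates \eqref{eigenvalue-estimate-ccm} and \eqref{abstract-L2-estimate} are instances of the abstract perturbation theory of Canc\`es, Chakir and Maday, and the only problem-specific input that theory requires is that the family $(\Vc)$ approximate smooth functions at the correct rate -- precisely the content of Lemma~\ref{density}. I would therefore proceed in three stages: (i) coercivity of $c_{\lambda,u}$; (ii) the eigenvalue identity; (iii) the duality argument for the $L^2$-norm.

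\textbf{(i) The scalar-product claim.} Symmetry is immediate, so the work is positivity and $H^1$-coercivity. Writing $L_1:=-\ddiv A\nabla\cdot + b\cdot + \beta u^2\cdot$, for which $u$ is the simple, positive ground-state eigenfunction with eigenvalue $\lambda$, one has the decomposition $c_{\lambda,u}(v,v)=\langle (L_1-\lambda)v,v\rangle + 2\beta\|uv\|_{L^2(\Omega)}^2$ for every $v\in H^1_0(\Omega)$. Splitting $v=\alpha u + v_\perp$ with $v_\perp\in\uorth$, the first term equals $\langle(L_1-\lambda)v_\perp,v_\perp\rangle\ge 0$ because $(L_1-\lambda)$ annihilates $u$ and is strictly positive on $\uorth$ by simplicity of $\lambda$, while $2\beta\|uv\|_{L^2(\Omega)}^2>0$ whenever $v\ne 0$ since $u>0$ in $\Omega$; hence the form is positive definite. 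As $H^1_0(\Omega)\hookrightarrow L^2(\Omega)$ is compact, $c_{\lambda,u}$ is associated to a self-adjoint operator with discrete spectrum, so its smallest eigenvalue is positive and $c_{\lambda,u}(v,v)\gtrsim\|v\|_{L^2(\Omega)}^2$. Combining this with the G\r{a}rding estimate $c_{\lambda,u}(v,v)\ge\gmin\|\nabla v\|_{L^2(\Omega)}^2-\lambda\|v\|_{L^2(\Omega)}^2$ (the $b$- and $3\beta u^2$-terms being nonnegative) through a convex combination yields $c_{\lambda,u}(v,v)\simeq\tnorm{v}^2$. In particular the adjoint problem \eqref{adjoint-problem-for-L2} is well posed on $\uorth$ by Lax--Milgram.

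\textbf{(ii) The eigenvalue estimate.} I would start from $\lambda=a(u,u)+\beta\|u\|_{L^4(\Omega)}^4$ and $\lambda_H^\cs=a(u_H^\cs,u_H^\cs)+\beta\|u_H^\cs\|_{L^4(\Omega)}^4$ (the latter by testing the discrete Euler--Lagrange equation against $u_H^\cs$), set $e:=u-u_H^\cs$, and exploit the normalization identity $(u,e)_{L^2(\Omega)}=\tfrac12\|e\|_{L^2(\Omega)}^2$ together with the weak form of $u$ tested against $e$. Expanding the differences of the quadratic and quartic terms and substituting these relations collapses the linear-in-$e$ contributions, leaving terms that are either quadratic in $\tnorm{e}$ or a single multiple of $\|e\|_{L^2(\Omega)}$ stemming from the normalization defect; H\"older's inequality with the $L^\infty$- and $L^6$-bounds of Section~\ref{subsection-auxiliary-estimates} then delivers \eqref{eigenvalue-estimate-ccm}.

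\textbf{(iii) The $L^2$-estimate -- the main obstacle.} This is a nonlinear Aubin--Nitsche duality argument. I would test \eqref{adjoint-problem-for-L2} with the $\uorth$-component $e_\perp$ of the error, noting that $e$ differs from $e_\perp$ only by the higher-order multiple $\tfrac12\|e\|_{L^2(\Omega)}^2\,u$ quantified in (ii), so that $\|e\|_{L^2(\Omega)}^2\lesssim c_{\lambda,u}(\psi_{u_H^\cs-u},e_\perp)+\text{h.o.t.}$ After rewriting $c_{\lambda,u}(\psi_{u_H^\cs-u},e)$ through the weak forms of $u$ and $u_H^\cs$, the Galerkin orthogonality of $u_H^\cs$ permits subtracting an arbitrary $\psi\in\Vc$, which produces the factor $\inf_{\psi\in\Vc}\|\psi_{u_H^\cs-u}-\psi\|_{H^1(\Omega)}$. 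The delicate point, and the step I expect to be hardest, is that the residual of $u_H^\cs$ is governed by the genuine cubic nonlinearity $\beta(u^3-(u_H^\cs)^3)$ whereas $c_{\lambda,u}$ carries its linearization $3\beta u^2$; the mismatch generates a remainder of the form $\beta\bigl((u-u_H^\cs)^2(\cdots),\cdot\bigr)$ that is superlinear in $e$ and must be absorbed into $\tnorm{e}\,\|e\|_{L^2(\Omega)}$, after which the product structure of \eqref{abstract-L2-estimate} emerges. Throughout (ii) and (iii) the substantive labor is the bookkeeping of these nonlinear remainders; the abstract skeleton is exactly that of Canc\`es--Chakir--Maday, and the only genuinely new verification is the approximation property furnished by Lemma~\ref{density}.
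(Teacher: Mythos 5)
Your proposal is correct in substance, but it is worth noting that the paper does not actually prove this lemma: it is imported wholesale from Canc\`es--Chakir--Maday \cite[Lemma 1, Theorem 1, and Remark 2]{Cances:Chakir:Maday:2010}, and the only verification the authors supply is precisely the one you single out in your roadmap, namely that $\Vc$ furnishes a dense family of finite-dimensional subspaces with the right approximation order (Lemma~\ref{density}), so that the abstract theory applies. Your stages (i)--(iii) are therefore a reconstruction of the cited proofs rather than an alternative to anything written in the paper, and the reconstruction is sound: the splitting $c_{\lambda,u}(v,v)=\langle(L_1-\lambda)v,v\rangle+2\beta\|uv\|_{L^2(\Omega)}^2$, the identification of $\lambda$ as the simple principal eigenvalue of the linearization via positivity of $u$, the compactness/convex-combination route to $H^1$-coercivity, the normalization identity $(u,u-u_H^\cs)_{L^2(\Omega)}=\tfrac12\|u-u_H^\cs\|_{L^2(\Omega)}^2$ in step (ii), and the duality argument built on the mismatch between $3\beta u^2$ and $\beta\bigl(u^3-(u_H^\cs)^3\bigr)$ in step (iii) are all faithful to the source. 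Two small caveats: positive definiteness of $c_{\lambda,u}$ on all of $H^1_0(\Omega)$ genuinely requires $\beta>0$, since for $\beta=0$ one has $c_{\lambda,u}(u,u)=a(u,u)-\lambda=0$ (a restriction implicit in the lemma as stated, not a defect of your argument alone); and in step (ii) the linear term $2\beta(u^3,u-u_H^\cs)_{L^2(\Omega)}$ does not cancel but survives as the source of the $\|u-u_H^\cs\|_{L^2(\Omega)}$ contribution in \eqref{eigenvalue-estimate-ccm}, which your bookkeeping correctly anticipates but slightly misattributes to the normalization defect alone. What your version buys is self-containedness; what the paper's citation buys is brevity and the assurance that the superlinear remainders in step (iii) are absorbed exactly as in \cite{Cances:Chakir:Maday:2010} under the stated smallness of $h$.
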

Observe the following equivalence. If $\psi_w \in \uorth$ solves
\begin{align*}
\int_{\Omega} A \nabla \psi_{w} \cdot \nabla v_{\perp} + b \psi_{w} v_{\perp} + \beta 3 |u|^2 \psi_w v_{\perp} \dx - \lambda \int_{\Omega} \psi_w v_{\perp} \dx = \int_{\Omega} w v_{\perp} \dx
\end{align*}
for all $v_{\perp} \in \uorth$, then it also solves
\begin{eqnarray*}
\lefteqn{\int_{\Omega} A \nabla \psi_{w} \cdot \nabla v + b \psi_{w} v + \beta 3 |u|^2 \psi_w  v \dx - \lambda \int_{\Omega} \psi_w v \dx}\\
&=& 2 \beta (u^3,\psi_w)_{L^2(\Omega)} \int_{\Omega} u v \dx + \int_{\Omega} (w - (w,u)_{L^2(\Omega)}) v \dx
\end{eqnarray*}
for all $v\in H^1_0(\Omega)$. This can be easily seen as follows: assume $\ddiv A \nabla \psi_w  \in L^2(\Omega)$ (the general result follows by density arguments) and let $P^{\perp} : L^2(\Omega) \rightarrow \uorth$ denote the $L^2$-orthogonal projection given by $P^{\perp}(v):=v-(v,u)_{L^2(\Omega)}$.
Since
\begin{align*}
\int_{\Omega} \left( -\ddiv A \nabla \psi_w  + b \psi_w + 3 \beta |u|^2 \psi_w - \lambda \psi_w \right) v^{\perp} \dx = \int_{\Omega} w v^{\perp} \dx.
\end{align*}
we get
\begin{align*}
\int_{\Omega} P^{\perp}\left( -\ddiv A \nabla \psi_w  + b \psi_w + 3 \beta |u|^2 \psi_w - \lambda \psi_w \right) v \dx = \int_{\Omega} P^{\perp}(w) v \dx
\end{align*}
for all $v \in H^1_0(\Omega)$. By using the explicit formula for $P^{\perp}$ and the definition of $u$ the reformulated equation follows. Furthermore, since $\psi_w \in H^1_0(\Omega)$ solves a standard elliptic problem, classical theory (c.f. \cite{Gilbarg}) applies and we get the $L^{\infty}$-estimate
\begin{align}
\label{L-infty-estimate-adjoint-problem}\|\psi_w \|_{L^{\infty}(\Omega)} &\lesssim (1+\lambda)\| \psi_w \|_{L^2(\Omega)} + |(|u|^3,\psi_w)| + \|w\|_{L^2(\Omega)} \lesssim 
(1+\lambda) \|w\|_{L^2(\Omega)}.
\end{align} 

\begin{lemma}[$L^2$-error estimate]
\label{L2-estimate-lemma}
Let $h$ be sufficiently small and let $u$ denote the solution of Problem~\ref{weak-problem}, $u_H^\cs$ the solution of Problem~\ref{discrete-problem-in-V-c}, and $\psi_{u-u_H^\cs} \in \uorth$ denote the solution of \eqref{adjoint-problem-for-L2} for $w=u-u_H^\cs$. Then
\begin{align*}
\|u - u_H^\cs \|_{L^2(\Omega)}
&\lesssim \left( \min_{\psi^h \in V_h} \frac{\|\psi_{u-u_H^\cs} - \psi^h \|_{H^1(\Omega)}}{\|u - u_H^\cs \|_{L^2(\Omega)}}  + H \right) \|u - u_H^\cs \|_{H^1(\Omega)}.
\end{align*}
\end{lemma}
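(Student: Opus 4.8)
The plan is to start from the abstract duality estimate \eqref{abstract-L2-estimate} and reduce everything to controlling how well the dual function $\psi_{u-u_H^\cs}$ is approximated by the coarse space $\Vc$. Since the adjoint problem \eqref{adjoint-problem-for-L2} is linear in its right-hand side, we have $\psi_{u_H^\cs-u}=-\psi_{u-u_H^\cs}$, and because $\Vc$ and $V_h$ are linear spaces the infima of $\tnorm{\psi_{u_H^\cs-u}-\psi}$ and of $\tnorm{\psi_{u-u_H^\cs}-\psi}$ (over $\psi\in\Vc$, resp.\ over $V_h$) coincide. Hence \eqref{abstract-L2-estimate} reads
\begin{align*}
\|u-u_H^\cs\|_{L^2(\Omega)}^2 \lesssim \tnorm{u-u_H^\cs}\,\inf_{\psi\in\Vc}\tnorm{\psi_{u-u_H^\cs}-\psi},
\end{align*}
and it only remains to estimate the coarse best-approximation error of $\psi_{u-u_H^\cs}$.

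Next I would invoke the approximation property of $\Vc$ from Lemma~\ref{density} with $v=\psi_{u-u_H^\cs}$. This splits the coarse best-approximation error into the genuinely fine-scale term $\inf_{\psi^h\in V_h}\tnorm{\psi_{u-u_H^\cs}-\psi^h}$ plus $H$ times the $L^2$-norm of the elliptic \emph{data} $-\ddiv A\nabla\psi_{u-u_H^\cs}+b\,\psi_{u-u_H^\cs}$. To make this rigorous I first record $\psi_{u-u_H^\cs}\in L^\infty(\Omega)$ from \eqref{L-infty-estimate-adjoint-problem}; combined with $b\in L^2(\Omega)$ and $u\in L^\infty(\Omega)$ this guarantees $\ddiv A\nabla\psi_{u-u_H^\cs}\in L^2(\Omega)$, so Lemma~\ref{density} is applicable. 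I then read off the data from the equivalent full-space formulation of the adjoint problem \eqref{adjoint-problem-for-L2} given above, namely
\begin{align*}
-\ddiv A\nabla\psi_{w}+b\,\psi_{w}=\lambda\psi_{w}-3\beta|u|^2\psi_{w}+2\beta(u^3,\psi_{w})_{L^2(\Omega)}\,u+w-(w,u)_{L^2(\Omega)}\,u,
\end{align*}
with $w=u-u_H^\cs$.

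It then remains to bound every term on the right in $L^2(\Omega)$ by $\|w\|_{L^2(\Omega)}=\|u-u_H^\cs\|_{L^2(\Omega)}$. The engine is the stability of the adjoint solve: testing \eqref{adjoint-problem-for-L2} with $v_\perp=\psi_{w}$ and using that $c_{\lambda,u}$ is equivalent to the $H^1$-norm (Lemma~\ref{cances-chakir-maday-theorem}) gives $\tnorm{\psi_{w}}\lesssim\|w\|_{L^2(\Omega)}$, and in particular $\|\psi_{w}\|_{L^2(\Omega)}\lesssim\|w\|_{L^2(\Omega)}$. The lower-order terms $\lambda\psi_{w}$ and $3\beta|u|^2\psi_{w}$ are then controlled using $u\in L^\infty(\Omega)$, the two rank-one terms by Cauchy--Schwarz together with $\|u\|_{L^2(\Omega)}=1$ and $u^3\in L^\infty(\Omega)$, and $w-(w,u)_{L^2(\Omega)}u$ directly. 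Collecting these yields $\|-\ddiv A\nabla\psi_{w}+b\psi_{w}\|_{L^2(\Omega)}\lesssim\|u-u_H^\cs\|_{L^2(\Omega)}$, so Lemma~\ref{density} produces
\begin{align*}
\inf_{\psi\in\Vc}\tnorm{\psi_{u-u_H^\cs}-\psi}\lesssim H\,\|u-u_H^\cs\|_{L^2(\Omega)}+\min_{\psi^h\in V_h}\tnorm{\psi_{u-u_H^\cs}-\psi^h}.
\end{align*}
Substituting this into the duality estimate and dividing once by $\|u-u_H^\cs\|_{L^2(\Omega)}$ (the claim being trivial if it vanishes) gives exactly the asserted inequality. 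I expect the main obstacle to be precisely this $L^2$-control of the elliptic data $-\ddiv A\nabla\psi_{w}+b\psi_{w}$: the regularity bookkeeping needed so that Lemma~\ref{density} applies \emph{despite} $b$ lying only in $L^2(\Omega)$ relies crucially on the $L^\infty$-bound \eqref{L-infty-estimate-adjoint-problem}, and the observation that the data scales like $\|w\|_{L^2(\Omega)}$ rather than $\tnorm{w}$ is exactly what produces the extra factor $H$ and hence the improved (super-convergent) rate in the final theorem.
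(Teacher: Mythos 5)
Your proof is correct and follows essentially the same route as the paper's: both start from the duality bound \eqref{abstract-L2-estimate}, and both extract the extra factor $H$ from the fact that the adjoint solution solves an elliptic problem whose datum has $L^2$-norm $\lesssim\|u-u_H^\cs\|_{L^2(\Omega)}$, so that the Cl\'ement/kernel mechanism underlying $\Vc$ applies. The only difference is in packaging: you apply Lemma~\ref{density} to the \emph{continuous} adjoint solution $\psi_{u-u_H^\cs}$ --- which obliges you to verify $\ddiv A\nabla\psi_{u-u_H^\cs}\in L^2(\Omega)$ via the $L^\infty$-bound \eqref{L-infty-estimate-adjoint-problem} (needed because $b$ lies only in $L^2(\Omega)$) and to bound the reformulated right-hand side in $L^2(\Omega)$ by $\|u-u_H^\cs\|_{L^2(\Omega)}$, both of which you do correctly --- whereas the paper introduces a \emph{discrete} adjoint solution $\psi^h_{e_H^\cs}\in V_h$ together with an $a$-orthogonal coarse companion in $\Vc$ and reruns the kernel argument of Lemma~\ref{density} inline (their difference lies in $\operatorname{kernel}(I_H)$, so testing the datum against $(1-I_H)$ of the difference gains the factor $H$); this avoids any appeal to strong-form regularity of the continuous adjoint but requires a closing application of C\'ea's lemma to pass to $\min_{\psi^h\in V_h}$, a step your version inherits directly from Lemma~\ref{density}. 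One cosmetic point: the quantity actually controlled in Lemma~\ref{density} is the elliptic datum $f_v$ with $a(v,\cdot)=(f_v,\cdot)_{L^2(\Omega)}$, i.e.\ $-\ddiv A\nabla v+bv$; you read the lemma in this (intended) way, and in any case the literal reading would also go through here since $\|b\,\psi_{u-u_H^\cs}\|_{L^2(\Omega)}\lesssim\|b\|_{L^2(\Omega)}\|\psi_{u-u_H^\cs}\|_{L^\infty(\Omega)}\lesssim\|u-u_H^\cs\|_{L^2(\Omega)}$ by \eqref{L-infty-estimate-adjoint-problem}.
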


In Lemma~\ref{L2-estimate-lemma}, the assumption that $h$ should be sufficiently small enters by using the $L^2$-estimate \eqref{abstract-L2-estimate}. Note that the coarse mesh size $H$ remains unconstrained. 

\begin{proof}
We define $e_H^\cs:=u - u_H^\cs$. Using Lemma~\ref{cances-chakir-maday-theorem} (and therefore implicitly Lemma~\ref{density}) we get
\begin{align}
\label{l2-estimate-in-proof}\frac{\|e_H^\cs \|_{L^2(\Omega)}^2}{\|e_H^\cs \|_{H^1(\Omega)}} \lesssim \|\psi_{u-u_H^\cs} - \psi^\cs_H \|_{H^1(\Omega)} \le \|\psi_{u-u_H^\cs} - \psi^h \|_{H^1(\Omega)} + \| \psi^\cs_H - \psi^h\|_{H^1(\Omega)}
\end{align}
for all $\psi^\cs_H \in \Vc$ and all $\psi^h \in V_h$. It remains to properly choose $\psi^h$ and $\psi^\cs_H$. The proof is structured as follows. We choose $\phi_h \in V_h$ to be the fine space approximation of the solution of the adjoint problem (\ref{adjoint-problem-for-L2}) and $\psi^\cs_H$ is chosen to be the $a(\cdot,\cdot)$-orthogonal approximation of $\psi^h$. This guarantees that $\psi^\cs_H - \psi^h$ is in the kernel of our interpolation operator (i.e.,  $I_H(\psi^\cs_H - \psi^h)=0$) and we can estimate the occurring terms while gaining an additional error order of $H$. The proof is detailed in the following.

Let us choose $\psi^h := \psi_{e_H^\cs}^h$, where $\psi_{e_H^\cs}^h\in V_h$ solves
\begin{eqnarray*}
c_{\lambda,u}(\psi_{e_H^\cs}^h,v_h) &=& 2 \beta (|u|^3,\psi_{e_H^\cs}^h)_{L^2(\Omega)} \int_{\Omega} u v_h \dx + \int_{\Omega} (e_H^\cs - (e_H^\cs,u)_{L^2(\Omega)}) v_h \dx
\end{eqnarray*}
for all $v_h \in V_h$. The coercivity of $c_{\lambda,u}$ and reinterpretation of the equation in the sense of problem \eqref{adjoint-problem-for-L2} yields that $\psi_{e_H^\cs}^h$ is well defined. Next, we define
\begin{align*}
g(v,w,u):= - \beta 3 |u|^2 v + \lambda v + 2 \beta (|u|^3,v)_{L^2(\Omega)} u + (w - (w,u)_{L^2(\Omega)})
\end{align*}
and solve for $\psi_{e_H^\cs}^{H,c} \in \Vc$ with
\begin{eqnarray*}
\int_{\Omega} A \nabla \psi_{e_H^\cs}^{H,c} \cdot \nabla v_H^\cs + b \psi_{e_H^\cs}^{H,c} v_H^\cs \dx &=& \int_{\Omega} g(\psi_{e_H^\cs}^h,e_H^\cs,u) v_H^\cs \dx
\end{eqnarray*}
for all $v_H^\cs \in \Vc$.
Since equally $\psi_{e_H^\cs}^h\in V_h$ fulfills
\begin{eqnarray*}
\int_{\Omega} A \nabla \psi_{e_H^\cs}^{h} \cdot \nabla v_h + b \psi_{e_H^\cs}^{h} v_h \dx &=& \int_{\Omega} g(\psi_{e_H^\cs}^h,e_H^\cs,u) v_h \dx
\end{eqnarray*}
for all $v_h \in V_h$, we obtain by using the $a(\cdot,\cdot)$-orthogonality of $\psi_{e_H^\cs}^{h}$ and $\psi_{e_H^\cs}^{H,c}$
\begin{align*}
a( \psi_{e_H^\cs}^{h} - \psi_{e_H^\cs}^{H,c},  \psi_{e_H^\cs}^{h} - \psi_{e_H^\cs}^{H,c} ) &= \int_{\Omega} g(\psi_{e_H^\cs}^h,e_H^\cs,u) (\psi_{e_H^\cs}^{h} - \psi_{e_H^\cs}^{H,c}) \dx\\
&\le \int_{\Omega} g(\psi_{e_H^\cs}^h,e_H^\cs,u) (\mbox{Id}-I_H)(\psi_{e_H^\cs}^{h} - \psi_{e_H^\cs}^{H,c}) \dx\\
&\lesssim ( \lambda \| \psi_{e_H^\cs}^h \|_{H^1(\Omega)} + \|e_H^\cs\|_{L^2(\Omega)} ) H \| \nabla (\psi_{e_H^\cs}^{h} - \psi_{e_H^\cs}^{H,c}) \|_{L^2(\Omega)}.
\end{align*}
Since
\begin{align*}
\| \psi_{e_H^\cs}^h \|^2_{H^1(\Omega)} \lesssim c_{\lambda,u}(\psi_{e_H^\cs}^h,\psi_{e_H^\cs}^h ) = (e_H^\cs,\psi_{e_H^\cs}^h)_{L^2(\Omega)},
\end{align*}
we get
\begin{align*}
\| \psi_{e_H^\cs}^h - \psi_{e_H^\cs}^{H,c} \|_{H^1(\Omega)} \lesssim H ( \| e_H^\cs \|_{L^2(\Omega)} + \lambda \| \psi_{e_H^\cs}^h \|_{H^1(\Omega)} ) \lesssim (1+\lambda) H \| e_H^\cs \|_{L^2(\Omega)}.
\end{align*}
Combining this estimate with \eqref{l2-estimate-in-proof} yields
\begin{align*}
\|u - u_H^\cs \|_{L^2(\Omega)} &\lesssim \left( \frac{\|\psi_{u-u_H^\cs} - \psi_{e_H^\cs}^{h} \|_{H^1(\Omega)}}{\|u - u_H^\cs \|_{L^2(\Omega)}}  + \frac{\|\psi_{e_H^\cs}^{h} - \psi_{e_H^\cs}^{H,c} \|_{H^1(\Omega)}}{\|u - u_H^\cs \|_{L^2(\Omega)}} \right) \|u - u_H^\cs \|_{H^1(\Omega)} \\
&\lesssim \left(\frac{\|\psi_{u-u_H^\cs} - \psi_{e_H^\cs}^{h} \|_{H^1(\Omega)}}{\|u - u_H^\cs \|_{L^2(\Omega)}}  + (1+\lambda)H \right) \|u - u_H^\cs \|_{H^1(\Omega)}\\
&\lesssim \left( \min_{\psi^h \in V_h} \frac{\|\psi_{u-u_H^\cs} - \psi^h \|_{H^1(\Omega)}}{\|u - u_H^\cs \|_{L^2(\Omega)}}  + H \right) \|u - u_H^\cs \|_{H^1(\Omega)}.
\end{align*}
In the last step we used C\'{e}a's lemma for linear elliptic problems and the fact that the $H^1$-best-approximation in the orthogonal space $\uorth \cap V_h$ can be bounded by the $H^1$-best-approximation in the full space $V_h$ (c.f. \cite{Cances:Chakir:Maday:2010} and equation (40) therein).
\end{proof}

Using \eqref{final-H1-estimate} and Lemma~\ref{L2-estimate-lemma} we obtain for $e_H^{\cs} := u - u_H^\cs$
\begin{align*}
\| e_H^{\cs} \|_{L^2(\Omega)}
&\lesssim \left( \min_{\psi^h \in V_h} \frac{\|\psi_{u-u_H^\cs} - \psi^h \|_{H^1(\Omega)}}{\|e_H^{\cs} \|_{L^2(\Omega)}}  + H \right) \| e_H^{\cs} \|_{H^1(\Omega)} \lesssim (|e_h^0| + H)\hspace{2pt}(|e_h^1| + H^2),
\end{align*}
where $|e_h^1| := \min_{v_h \in V_h}  \| u - v_h \|_{H^1(\Omega)}$ and $|e_h^0| := \min_{\psi^h \in V_h} \frac{\|\psi_{u-u_H^\cs} - \psi^h \|_{H^1(\Omega)}}{\|u - u_H^\cs \|_{L^2(\Omega)}}$.
Together with \eqref{eigenvalue-estimate-ccm} this yields
\begin{align*}
|\lambda- \lambda_H^\cs| &\lesssim \| e_H^{\cs} \|_{H^1(\Omega)}^2  + \| e_H^{\cs} \|_{L^2(\Omega)} \\
&\lesssim (|e_h^1|+ H^2)^2 + (|e_h^0| + H)\hspace{2pt}(|e_h^1| + H^2) \lesssim H |e_h^1| + H^3.
\end{align*}

\subsection{Proof of Theorem~\ref{main-result-post}}\label{proof-main-result-post}

Again, we split the proof into two subsections, one concerning $H^1$-error estimate and the other the $L^2$-error estimate.
\subsubsection{Proof of the $H^1$ error estimate \eqref{final-post-processed-H1-estimate}}
Due to the definitions of $u_h$ and $u_h^\cs$ we get for $v_h \in V_h$
\begin{eqnarray*}
\lefteqn{a(u_h-u^\cs_h, v_h)}\\
&=& \lambda_h(u_h,v_h) - \lambda^\cs_H( u^\cs_H , v_h ) - \beta ( |u_h|^2 u_h, v_h )_{L^2(\Omega)} + \beta ( |u^\cs_H|^2 u^\cs_H,v_h )_{L^2(\Omega)}\\
&=& \lambda_h(u_h-u^\cs_H,v_h) + (\lambda_h - \lambda^\cs_H)( u^\cs_H , v_h ) - \beta \sum_{i=0}^2 ( (u_h)^{2-i} (u^\cs_H)^i (u_h-u^\cs_H), v_h )_{L^2(\Omega)}.
\end{eqnarray*}
The treatment of the first and the second term in this error identity is obvious. The last term is treated with the H\"older-inequality and the embedding $H^1_0(\Omega) \hookrightarrow L^6(\Omega)$ (for $d\le3$):
\begin{eqnarray*}
\lefteqn{\sum_{i=0}^2 ( (u_h)^{2-i} (u^\cs_H)^i (u_h-u^\cs_H), v_h )_{L^2(\Omega)}} \\
&\le& \| u_h \|^2_{L^{6}(\Omega)} \| u_h-u^\cs_H \|_{L^2(\Omega)} \| v_h \|_{L^6(\Omega)} \\
&\enspace& \enspace + \| u_h \|_{L^{6}(\Omega)} \| u_H^\cs \|_{L^6(\Omega)} \| u_h-u^\cs_H \|_{L^2(\Omega)} \| v_h \|_{L^6(\Omega)} \\
&\enspace& \enspace +  \| u_H^\cs \|_{L^6(\Omega)}^2 \| u_h-u^\cs_H \|_{L^2(\Omega)} \| v_h \|_{L^6(\Omega)} \\
&\lesssim& \| u_h \|^2_{H^1(\Omega)} \| u_h-u^\cs_H \|_{L^2(\Omega)} \| v_h \|_{H^1(\Omega)} \\
&\enspace& \enspace + \| u_h \|_{H^1(\Omega)} \| u_H^\cs \|_{H^1(\Omega)} \hspace{2pt} \| u_h-u^\cs_H \|_{L^2(\Omega)}  \| v_h  \|_{H^1(\Omega)} \\
&\enspace& \enspace +  \| u_H^\cs  \|^2_{H^1(\Omega)} \| u_h-u^\cs_H \|_{L^2(\Omega)} \| v_h \|_{H^1(\Omega)}.
\end{eqnarray*}
We therefore get with $v_h = u_h-u^\cs_h$ and the Poincar\'{e}-Friedrichs inequality
\begin{align*}
\|u_h-u^\cs_h\|_{H^1(\Omega)} &\lesssim (\lambda_h+\lambda_H^\cs) \| u_h-u^\cs_H \|_{L^2(\Omega)} + |\lambda_h - \lambda^\cs_H|.
\end{align*}
This implies \eqref{final-post-processed-H1-estimate}.

\subsubsection{Proof of the $L^2$ error estimate in \eqref{final-post-processed-L2-estimate}}
\label{subsubsection-postprocessed-L2}

We start with a lemma that allows us to formulate an error identity.
\begin{lemma}
\label{lemma-L2-identity}Let $v \in H^1_0(\Omega)$ be an arbitrary function with $\|v\|_{L^2(\Omega)}=1$ and let $\psi_{u-v} \in \uorth$ denote the corresponding solution of the adjoint problem with
\begin{align*}
c_{\lambda,u}(\psi_{u-v},w_{\perp} ) = (u-v,w_{\perp})_{L^2(\Omega)}
\end{align*}
for all $w_{\perp} \in \uorth$ (c.f. \eqref{adjoint-problem-for-L2}). Then it holds
\begin{align*}
\|u-v\|_{L^2(\Omega)}^2 = c_{\lambda,u}(v-u,\psi_{u-v}) + \| u - v\|_{L^2(\Omega)}^2 \int_{\Omega} |u|^2 u \psi_{u-v} \dx + \frac{1}{4} \| u - v \|_{L^2(\Omega)}^4.
\end{align*}
\end{lemma}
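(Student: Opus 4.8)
The plan is to convert the linear adjoint problem defining $\psi_{u-v}$ into a scalar identity for $\|u-v\|_{L^2(\Omega)}^2$ by testing it with a carefully chosen element of $\uorth$ built from the error $e:=u-v$ itself. The one subtlety is that $e$ does not belong to $\uorth$, so a small component along $u$ must be removed first; that correction is precisely what generates both the cubic coupling term and the quartic remainder in the claimed identity.

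First I would record the normalization relation. Since $\|u\|_{L^2(\Omega)}=\|v\|_{L^2(\Omega)}=1$, polarization gives $(u,v)_{L^2(\Omega)}=1-\tfrac12\|e\|_{L^2(\Omega)}^2$ and hence $(e,u)_{L^2(\Omega)}=\tfrac12\|e\|_{L^2(\Omega)}^2$. Consequently the corrected error $e_\perp:=e-(e,u)_{L^2(\Omega)}\,u=e-\tfrac12\|e\|_{L^2(\Omega)}^2\,u$ satisfies $(e_\perp,u)_{L^2(\Omega)}=0$, so $e_\perp\in\uorth$ is an admissible test function.

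Next I would insert $w_\perp=e_\perp$ into the defining equation $c_{\lambda,u}(\psi_{u-v},w_\perp)=(u-v,w_\perp)_{L^2(\Omega)}$. The right-hand side evaluates directly to $(e,e_\perp)_{L^2(\Omega)}=\|e\|_{L^2(\Omega)}^2-\tfrac14\|e\|_{L^2(\Omega)}^4$, which already produces the quadratic and quartic contributions. For the left-hand side I would use the splitting $e_\perp=e-\tfrac12\|e\|_{L^2(\Omega)}^2\,u$ together with the symmetry of $c_{\lambda,u}$, reducing everything to the evaluation of $c_{\lambda,u}(\psi_{u-v},u)$. The decisive step is to exploit that $u$ solves the Gross-Pitaevskii equation: testing the weak GPE with $\phi=\psi_{u-v}$ and using $(\psi_{u-v},u)_{L^2(\Omega)}=0$ collapses the second-order and mass terms, leaving $c_{\lambda,u}(\psi_{u-v},u)=2\beta\,(|u|^2u,\psi_{u-v})_{L^2(\Omega)}$ (the same value one reads off the reformulated adjoint equation with test function $u$).

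Finally I would collect the pieces and solve the resulting scalar equation for $\|e\|_{L^2(\Omega)}^2$. Substituting the value of $c_{\lambda,u}(\psi_{u-v},u)$ and rearranging isolates $\|u-v\|_{L^2(\Omega)}^2$ and yields the stated identity: the coupling term proportional to $\|u-v\|_{L^2(\Omega)}^2\int_\Omega|u|^2u\,\psi_{u-v}\dx$ originates from the correction along $u$ via $c_{\lambda,u}(\psi_{u-v},u)$, while the $\tfrac14\|u-v\|_{L^2(\Omega)}^4$ term comes from the right-hand side $(e,e_\perp)_{L^2(\Omega)}$. The only genuine obstacle is bookkeeping the non-orthogonality of $e$ to $u$ consistently; once the projected test function $e_\perp$ and the GPE evaluation of $c_{\lambda,u}(\psi_{u-v},u)$ are in place, the rest is elementary algebra.
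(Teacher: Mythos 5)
Your strategy is the right one and is, in substance, the derivation that the paper itself outsources to Canc\`es--Chakir--Maday (the paper's own ``proof'' of this lemma is a one-line citation). All the key ingredients check out: writing $e:=u-v$ and $\psi:=\psi_{u-v}$, the normalizations give $(e,u)_{L^2(\Omega)}=\tfrac12\|e\|_{L^2(\Omega)}^2$, so $e_\perp:=e-\tfrac12\|e\|_{L^2(\Omega)}^2\,u\in\uorth$ is admissible, $(e,e_\perp)_{L^2(\Omega)}=\|e\|_{L^2(\Omega)}^2-\tfrac14\|e\|_{L^2(\Omega)}^4$, and testing the weak GPE with $\psi$ together with $(\psi,u)_{L^2(\Omega)}=0$ indeed gives $c_{\lambda,u}(\psi,u)=2\beta(|u|^2u,\psi)_{L^2(\Omega)}$. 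The one place where you are too quick is the closing claim that ``rearranging yields the stated identity'': if you actually collect the pieces you have assembled, the scalar equation reads
\begin{align*}
\|u-v\|_{L^2(\Omega)}^2 \;=\; c_{\lambda,u}(u-v,\psi_{u-v}) \;-\; \beta\,\|u-v\|_{L^2(\Omega)}^2\int_{\Omega}|u|^2u\,\psi_{u-v}\dx \;+\; \frac14\,\|u-v\|_{L^2(\Omega)}^4,
\end{align*}
which differs from the printed statement in the sign of the first two terms and in the factor $\beta$ multiplying the middle term. This is not a defect of your argument: the printed lemma appears to carry sign and coefficient slips (plausibly inherited from the $\beta=1$ convention of the cited reference), and the discrepancy is harmless for its later use, where only $|c_{\lambda,u}(v-u,\psi_{u-v})|$ and the absorbability of the remaining two terms enter. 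But a complete write-up must carry the signs through the last step explicitly and record the identity the computation actually produces, rather than asserting literal agreement with the statement.
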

The lemma can be extracted from the proofs given in \cite[pp. 99--100]{Cances:Chakir:Maday:2010}.

$\\$
The following lemma treats the semi-discrete case, i.e., we assume $V_h=H^1_0(\Omega)$. The reason is that the proof of the fully discrete case becomes very technical and hard to read. We note that the proof of the semi-discrete case analogously transfers to the fully-discrete case with sufficiently small $h$ by inserting additional continuous approximations to overcome the problems produced by the missing
uniform bounds for $\|u_h\|_{L^{\infty}(\Omega)}$ and $\|u_h^\cs\|_{L^{\infty}(\Omega)}$. For the readers convenience we therefore only prove the case $h=0$.
\begin{lemma}[Estimate \eqref{final-post-processed-L2-estimate} for $h=0$]
\label{lemma-post-processed-L2-bound}Assume $h=0$, i.e., $V_h=H^1_0(\Omega)$. Accordingly we let $u^\cs_0\in H^1_0(\Omega)$ denote the semi-discrete post-processed approximation, i.e., the solution to the problem
\begin{align*}
\int_{\Omega} A \nabla u^\cs_0 \cdot \nabla \phi \dx + \int_{\Omega} b u^\cs_0 \phi \dx = \lambda^\cs_H \int_{\Omega} u^\cs_H \phi \dx - \int_{\Omega} \beta |u^\cs_H|^2 u^\cs_H \phi \dx
\end{align*}
for all $\phi \in H^1_0(\Omega)$ (c.f. Problem~\ref{two-grid-postprocessing}). Then it holds
\begin{align*}
\|u -u^\cs_0\|_{L^2(\Omega)} &\lesssim H^4.
\end{align*}
\end{lemma}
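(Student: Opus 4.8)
The plan is to run an Aubin--Nitsche duality argument based on the linearized adjoint problem \eqref{adjoint-problem-for-L2} and then to exploit the two-grid structure of the post-processing to gain one power of $H$ over the coarse $L^2$-estimate \eqref{final-L2-estimate}. Throughout write $e:=u-u^\cs_0$ and $e_H:=u-u^\cs_H$. For $h=0$ the reference solution coincides with $u$, so Theorem~\ref{main-result-pre} gives $\tnorm{e_H}\lesssim H^2$, $\|e_H\|_{L^2(\Omega)}\lesssim H^3$ and $|\lambda-\lambda^\cs_H|\lesssim H^3$, while the already established estimate \eqref{final-post-processed-H1-estimate} gives $\tnorm{e}\lesssim H^3$; in particular $\|u^\cs_0-u^\cs_H\|_{L^2(\Omega)}\lesssim H^3$. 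Let $\psi:=\psi_e\in\uorth$ solve \eqref{adjoint-problem-for-L2} with $w=e$. Testing the all-test-function reformulation of \eqref{adjoint-problem-for-L2} (the equivalence stated after Lemma~\ref{cances-chakir-maday-theorem}) with $v=e$ produces the identity
\begin{align*}
\|e\|_{L^2(\Omega)}^2 = c_{\lambda,u}(e,\psi) - 2\beta(u^3,\psi)_{L^2(\Omega)}(u,e)_{L^2(\Omega)} + (u,e)_{L^2(\Omega)}^2,
\end{align*}
the unnormalized analogue of Lemma~\ref{lemma-L2-identity}. Since $\|\psi\|_{H^1(\Omega)}\lesssim\|e\|_{L^2(\Omega)}$ and, by \eqref{L-infty-estimate-adjoint-problem}, $\|\psi\|_{L^\infty(\Omega)}\lesssim\|e\|_{L^2(\Omega)}$, the last two terms will be of higher order, so the whole estimate hinges on $c_{\lambda,u}(e,\psi)$.

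Next I would evaluate $c_{\lambda,u}(e,\psi)$ by inserting the weak GPE for $u$ and the post-processing equation of Problem~\ref{two-grid-postprocessing} (with $h=0$) for $u^\cs_0$. The decisive structural fact is obtained by testing the post-processing equation against $v^\cs\in\Vc$ and subtracting the Euler--Lagrange equation $a(u^\cs_H,v^\cs)+\beta((u^\cs_H)^3,v^\cs)_{L^2(\Omega)}=\lambda^\cs_H(u^\cs_H,v^\cs)_{L^2(\Omega)}$ of Problem~\ref{discrete-problem-in-V-c}: this yields $a(u^\cs_0-u^\cs_H,v^\cs)=0$ for all $v^\cs\in\Vc$, hence by the $a$-orthogonal splitting \eqref{H1-orthogonality}
\begin{align*}
u^\cs_0-u^\cs_H\in\Vf,\qquad\text{equivalently}\qquad\Pc u^\cs_0=u^\cs_H,
\end{align*}
so that $u^\cs_0-u^\cs_H$ is $L^2$-orthogonal to $V_H$ by \eqref{L2-orthogonality}. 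Combined with the $L^2$-quasi-orthogonality \eqref{L2-quasi-orthogonality} this also yields $|1-\|u^\cs_0\|_{L^2(\Omega)}^2|\lesssim H^4$, which controls $(u,e)_{L^2(\Omega)}=\tfrac12\|e\|_{L^2(\Omega)}^2+\tfrac12(1-\|u^\cs_0\|_{L^2(\Omega)}^2)$ and renders the last two terms of the identity of size $\|e\|_{L^2(\Omega)}^3+H^4\|e\|_{L^2(\Omega)}+H^8$.

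Carrying out the substitution and using $\psi\in\uorth$ throughout (which annihilates the leading eigenvalue-error contributions and leaves only $-(\lambda-\lambda^\cs_H)(e_H,\psi)_{L^2(\Omega)}\lesssim H^6\|e\|_{L^2(\Omega)}$), the quantity $c_{\lambda,u}(e,\psi)$ reduces---modulo the cubic remainders $3\beta(ue_H^2,\psi)_{L^2(\Omega)}-\beta(e_H^3,\psi)_{L^2(\Omega)}\lesssim(\tnorm{e_H}^2+\tnorm{e_H}^3)\|\psi\|_{L^2(\Omega)}\lesssim H^4\|e\|_{L^2(\Omega)}$ and the normalization remainder $-2\beta(u^3,\psi)_{L^2(\Omega)}(u,e)_{L^2(\Omega)}$---to the two borderline contributions $((3\beta u^2-\lambda)e,\psi)_{L^2(\Omega)}$ and $-((3\beta u^2-\lambda)e_H,\psi)_{L^2(\Omega)}$. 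Taken separately these are only of size $\|e\|_{L^2(\Omega)}^2$ and $H^3\|e\|_{L^2(\Omega)}$ and would ruin the argument; the crux is that, because $e-e_H=-(u^\cs_0-u^\cs_H)$, their sum equals $-((3\beta u^2-\lambda)(u^\cs_0-u^\cs_H),\psi)_{L^2(\Omega)}$, in which $u^\cs_0-u^\cs_H$ is exactly the function that is $L^2$-orthogonal to $V_H$. I expect this cancellation-plus-orthogonality to be the main obstacle. Setting $g:=3\beta u^2-\lambda$ and using $(u^\cs_0-u^\cs_H,v_H)_{L^2(\Omega)}=0$ with $v_H=I_H(g\psi)\in V_H$, I would rewrite this term as $(u^\cs_0-u^\cs_H,(1-I_H)(g\psi))_{L^2(\Omega)}$ and apply the Cl\'ement estimate \eqref{e:errorclement}: since $u\in L^\infty(\Omega)$ and $\|\psi\|_{L^\infty(\Omega)}+\|\psi\|_{H^1(\Omega)}\lesssim\|e\|_{L^2(\Omega)}$ give $\|g\psi\|_{H^1(\Omega)}\lesssim\|e\|_{L^2(\Omega)}$, it is bounded by $\|u^\cs_0-u^\cs_H\|_{L^2(\Omega)}\,H\,\|g\psi\|_{H^1(\Omega)}\lesssim H^4\|e\|_{L^2(\Omega)}$.

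Finally I would assemble the pieces. All contributions are now bounded by $H^4\|e\|_{L^2(\Omega)}$ or by the higher-order remainders $\|e\|_{L^2(\Omega)}^3+H^8$, whence $\|e\|_{L^2(\Omega)}^2\lesssim H^4\|e\|_{L^2(\Omega)}+\|e\|_{L^2(\Omega)}^3+H^8$. Because $\|e\|_{L^2(\Omega)}\lesssim H^3$, the cubic term is absorbed into the left-hand side for sufficiently small $H$, and a Young inequality then yields $\|u-u^\cs_0\|_{L^2(\Omega)}\lesssim H^4$, as claimed.
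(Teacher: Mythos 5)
Your proposal is correct and rests on the same three pillars as the paper's own proof: the duality argument with the linearized adjoint problem \eqref{adjoint-problem-for-L2}, the structural fact that $u^\cs_0-u^\cs_H\in\Vfzero$ (hence $L^2$-orthogonal to $V_H$), and the resulting gain of one power of $H$ via the Cl\'ement estimate applied to the single borderline term. Where you diverge --- profitably --- is in the handling of the normalization. The paper first proves $\left|1-\|u^\cs_0\|_{L^2(\Omega)}\right|\lesssim H^4$ by a fairly long comparison of $\|u^\cs_0\|_{H^1(\Omega)}^2$ with $\|u^\cs_H\|_{H^1(\Omega)}^2$ (its Step~1), then normalizes to $\tilde{u}^\cs_0$ so that Lemma~\ref{lemma-L2-identity} applies verbatim, and finally estimates seven terms I--VII. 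You instead keep $u^\cs_0$ unnormalized, push the normalization defect into the terms $(u,e)_{L^2(\Omega)}$ of the identity, and obtain $\left|1-\|u^\cs_0\|^2_{L^2(\Omega)}\right|\lesssim H^4$ in essentially one line from the $L^2$-quasi-orthogonality \eqref{L2-quasi-orthogonality} combined with $\|u^\cs_0-u^\cs_H\|_{H^1(\Omega)}\lesssim H^2$; this is noticeably shorter than the paper's Step~1. Similarly, for the critical term you group $g:=3\beta u^2-\lambda$ and interpolate the product directly, $(u^\cs_0-u^\cs_H,(1-I_H)(g\psi))_{L^2(\Omega)}\lesssim\|u^\cs_0-u^\cs_H\|_{L^2(\Omega)}\,H\,\|g\psi\|_{H^1(\Omega)}$, whereas the paper assembles the interpolant by hand from the nodal values $u_z,\psi_z$ (its terms III and VI); both variants rely on $u\in L^\infty(\Omega)$ and on the $L^\infty$-bound \eqref{L-infty-estimate-adjoint-problem} for $\psi$ to get $\|g\psi\|_{H^1(\Omega)}\lesssim\|e\|_{L^2(\Omega)}$. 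The concluding absorption, using the a priori bound $\|e\|_{L^2(\Omega)}\lesssim H^3$ inherited from \eqref{final-post-processed-H1-estimate} to remove the cubic term, is sound.
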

 
\begin{proof}
We divide the proof into two steps. We want to make use of the error identity in Lemma \ref{lemma-L2-identity} with $v=u^\cs_0$. However, $u^\cs_0$ is not $L^2$-normalized and therefore no admissible test function in the error identity. In the first step, we therefore show that the normalization only produces an error of order $H^4$. In the second step it remains to show that the $L^2$-error between $u$ and the $L^2$-normalized $u^\cs_0$ is also of order $H^4$.

$\\$
{\it Step 1.} We show that $\left| || u^\cs_H ||_{L^2(\Omega)} - || u^\cs_0 ||_{L^2(\Omega)} \right|\lesssim H^4$, which implies $1 - H^4 \lesssim \|u_0^\cs\|_{L^{2}(\Omega)} \lesssim 1 + H^4$ (because of $|| u^\cs_H ||_{L^2(\Omega)}=1$).

First observe that $u^\cs_0\in H^1_0(\Omega)$ is the solution to a classical elliptic problem, which is why we obtain
\begin{align}
\label{L-infty-bound-u-c-0}\| u^\cs_0 \|_{L^{\infty}} \lesssim \lambda_H^\cs \lesssim \lambda.
\end{align}
Since $a(u^\cs_0- u^\cs_H, v^\cs_H) = 0$ for all $v^\cs_H \in \Vczero$ we get $u^\cs_0-u^\cs_H \in \Vfzero$. Hence
\begin{eqnarray*}
\lefteqn{a(u^\cs_0- u^\cs_H,u^\cs_0- u^\cs_H) = a(u^\cs_0, u^\cs_0- u^\cs_H)} \\
&=& \lambda_H^\cs (u_H^\cs,u^\cs_0- u^\cs_H) - \beta(|u_H^\cs|^2 u_H^\cs, u^\cs_0- u^\cs_H)\\
&=& \lambda_H^\cs (u_H^\cs,u^\cs_0- u^\cs_H) - \beta(|u_H^\cs|^2 u_H^\cs - |u|^2 u, u^\cs_0- u^\cs_H) - \beta(|u|^2 u, u^\cs_0- u^\cs_H).
\end{eqnarray*}
Using $u^\cs_0-u^\cs_H \in \Vfzero$ and inserting $I_H(u_H^\cs)$ and $I_H(u)$ several times, we get with similar arguments as above and with the previous estimate for $u_H^\cs-u$:
\begin{align*}
\| u^\cs_0- u^\cs_H \|_{H^1(\Omega)} \lesssim H^2
\end{align*}
and
\begin{align}
\label{u-c-0-u-c-H-L2-estimate}\| u^\cs_0- u^\cs_H \|_{L^2(\Omega)} = \| (u^\cs_0- u^\cs_H) - I_H(u^\cs_0- u^\cs_H) \|_{L^2(\Omega)} \lesssim H \| u^\cs_0- u^\cs_H \|_{H^1(\Omega)} \lesssim H^3.
\end{align}
Next, we show that $\left| \| u^\cs_0 \|_{L^2(\Omega)} -  1 \right|$ is of higher order. We start with:
\begin{eqnarray*}
\lefteqn{\| u^\cs_0 \|_{H^1(\Omega)}^2 - \|u^\cs_H \|_{H^1(\Omega)}^2 = a(u^\cs_0 ,u^\cs_0 ) - a (u^\cs_H,u^\cs_H) }\\
&=& \lambda_H^\cs (u_H^\cs, u_0^\cs - u_H^\cs)_{L^2(\Omega)} - \beta (|u_H^\cs|^2 u_H^\cs, (u^\cs_0 - u_H^\cs) )_{L^2(\Omega)} \\
&=& \lambda_H^\cs (u_H^\cs - I_H(u_H^\cs), u_0^\cs - u_H^\cs)_{L^2(\Omega)} \\
&\enspace& \quad - \beta (|u_H^\cs|^2 u_H^\cs - |u^\cs_0|^2 u_0^\cs , (u^\cs_0 - u_H^\cs) )_{L^2(\Omega)} - \beta ( |u^\cs_0|^2 u_0^\cs , (u^\cs_0 - u_H^\cs) )_{L^2(\Omega)}\\
&\overset{\eqref{u-c-0-u-c-H-L2-estimate}}{\lesssim}& (H^4 + H^6 - \beta ( |u^\cs_0|^2 u_0^\cs , u^\cs_0 - u_H^\cs )_{L^2(\Omega)}.
\end{eqnarray*}
Using that $u^\cs_0$ is bounded uniformly in $L^{\infty}(\Omega)$ we can proceed as in the proof of Lemma~\ref{corollary-properties-eigenvalue-decompose} to show:
\begin{align*}
 \beta ( |u^\cs_0|^2 u_0^\cs , u^\cs_0 - u_H^\cs )_{L^2(\Omega)} \lesssim H \|u^\cs_0\|_{H^1(\Omega)} \|u^\cs_0 - u_H^\cs \|_{L^2(\Omega)} \lesssim H^4.
\end{align*}
So in summary:
\begin{align*}
\left| \| u^\cs_0 \|_{H^1(\Omega)}^2 - \| u^\cs_H \|_{H^1(\Omega)}^2 \right| \lesssim H^4.
\end{align*}
However, on the other hand:
\begin{eqnarray*}
\lefteqn{\lambda_H^\cs \left( \| u^\cs_H \|_{L^2(\Omega)}^2 - \| u^\cs_0 \|_{L^2(\Omega)}^2 \right) }\\
&=& \lambda_H^\cs (u_H^\cs-u_0^\cs, u_0^\cs - I_H(u_0^\cs) )_{L^2(\Omega)} - \beta (|u_H^\cs|^2 u_H^\cs, (u^\cs_0 - u_H^\cs) )_{L^2(\Omega)} \\
&\enspace& \qquad - \| u^\cs_0 \|_{H^1(\Omega)}^2 + \| u^\cs_H \|_{H^1(\Omega)}^2.
\end{eqnarray*}
This we can treat with the previous results to get:
\begin{eqnarray*}
\left| || u^\cs_H ||_{L^2(\Omega)}^2 - || u^\cs_0 ||_{L^2(\Omega)}^2 \right| \lesssim H^4.
\end{eqnarray*}
With $|| u^\cs_H ||_{L^2(\Omega)}=1$ we get
\begin{align}
\label{difference-L2-norms-estimate}\left| || u^\cs_H ||_{L^2(\Omega)} - || u^\cs_0 ||_{L^2(\Omega)} \right| \le  \left| || u^\cs_H ||_{L^2(\Omega)}^2 - \| u^\cs_0 \|_{L^2(\Omega)}^2 \right| \lesssim H^4.
\end{align}
Note that in the last step we used that for any $a\ge 0$ it holds $|1-a|\le|1-a^2|$.

$\\$
{\it Step 2.} Step 1 justifies the definition of $\tilde{u}^\cs_0 := \| u^\cs_0 \|_{L^2(\Omega)}^{-1} u^\cs_0$ which fulfills
\begin{align}
\label{L2-estimate-bar-u-c-0-u-c-0}\| \tilde{u}^\cs_0 - u^\cs_0 \|_{L^2(\Omega)} =  \left|\| u^\cs_0 \|_{L^2(\Omega)} - 1 \right| \hspace{2pt} \| u^\cs_0 \|_{L^2(\Omega)} \lesssim H^4.
\end{align}
Next, we show $\|u-\tilde{u}^\cs_0 \|_{L^2(\Omega)} \lesssim H^4$. For this purpose define $\tilde{\lambda}_H^\cs:=\| u^\cs_0 \|_{L^2(\Omega)}^{-1}\lambda_H^\cs $. Then $\tilde{u}^\cs_0 \in H^1_0(\Omega)$ solves
 \begin{align*}
\int_{\Omega} A \nabla \tilde{u}^\cs_0 \cdot \nabla \phi \dx + \int_{\Omega} b \tilde{u}^\cs_0 \phi \dx = \tilde{\lambda}^\cs_H \int_{\Omega} u^\cs_H \phi \dx - \int_{\Omega} \frac{\beta}{\| u^\cs_0 \|_{L^2(\Omega)}} |u^\cs_H|^2 u^\cs_H \phi \dx.
\end{align*}
We want to use Lemma \ref{lemma-L2-identity} and denote $\psi:=\psi_{u-\tilde{u}^\cs_0}$ with $\psi_{u-\tilde{u}^\cs_0} \in V_u^{\perp}$ being the solution of \eqref{adjoint-problem-for-L2} for $w=u-\tilde{u}^\cs_0$. Before we start to estimate $c_{\lambda,u}(\tilde{u}^\cs_0-u,\psi)$ observe that $(u,\psi)_{L^2(\Omega)}=0$ (by definition) which yields
\begin{eqnarray}
\label{pp-estimate-L2-step2}\lefteqn{\lambda_H^\cs (u_H^{\cs}, \psi)_{L^2(\Omega)} - \lambda (\tilde{u}^\cs_0, \psi)_{L^2(\Omega)}}\\
\nonumber&=& (\lambda_H^\cs - \lambda ) (u_H^{\cs} - u, \psi)_{L^2(\Omega)} +  \lambda (u_H^{\cs} - u_0^{\cs}, \psi)_{L^2(\Omega)} + \lambda (u_0^{\cs} - \tilde{u}^\cs_0, \psi)_{L^2(\Omega)}.
\end{eqnarray}
We get:
\begin{eqnarray*}
\lefteqn{c_{\lambda,u}(\tilde{u}^\cs_0-u,\psi)}\\
&=&a(\tilde{u}^\cs_0-u,\psi) + 3 \beta \int_{\Omega} |u|^2 \tilde{u}^\cs_0 \psi \dx - 3 \beta \int_{\Omega} |u|^2 u \psi \dx - \lambda ( \tilde{u}^\cs_0, \psi )_{L^2(\Omega)} + \lambda ( u, \psi )_{L^2(\Omega)} \\
&=& a(\tilde{u}^\cs_0,\psi) + 3 \beta \int_{\Omega} |u|^2 \tilde{u}^\cs_0 \psi \dx - 2 \beta \int_{\Omega} |u|^2 u \psi \dx - \lambda ( \tilde{u}^\cs_0, \psi )_{L^2(\Omega)}\\
&=& \left( \frac{1-{\| u^\cs_0 \|_{L^2(\Omega)}}}{\| u^\cs_0 \|_{L^2(\Omega)}} + 1 \right) \left( \lambda^\cs_H \int_{\Omega} u^\cs_H \psi \dx - \beta \int_{\Omega} |u^\cs_H|^2 u^\cs_H \psi \dx \right) \\
&\enspace& \quad + 3 \beta \int_{\Omega} |u|^2 \tilde{u}^\cs_0 \psi \dx - 2 \beta \int_{\Omega} |u|^2 u \psi \dx - \lambda ( \tilde{u}^\cs_0, \psi )_{L^2(\Omega)}\\
&\overset{\eqref{pp-estimate-L2-step2}}{=}& \underset{=:\mbox{I}}{\underbrace{\left( \frac{1-{\| u^\cs_0 \|_{L^2(\Omega)}}}{\| u^\cs_0 \|_{L^2(\Omega)}} \right)  ( \lambda^\cs_H u^\cs_H - \beta |u^\cs_H|^2 u^\cs_H, \psi)_{L^2(\Omega)}}} 
+ \underset{=:\mbox{II}}{\underbrace{(\lambda_H^\cs - \lambda)( u_H^\cs - u, \psi )_{L^2(\Omega)}}}\\
&\enspace& \quad + \underset{=:\mbox{III}}{\underbrace{\lambda( u_H^\cs - u_0^\cs, \psi - I_H(\psi) )_{L^2(\Omega)}}} + \underset{=:\mbox{IV}}{\underbrace{\lambda( u_0^\cs - \tilde{u}_0^\cs, \psi )_{L^2(\Omega)}}}\\
&\enspace& \quad + \underset{=:\mbox{V}}{\underbrace{3 \beta (|u|^2(\tilde{u}_0^\cs -u_0^\cs),\psi )_{L^2(\Omega)}}} + \underset{=:\mbox{VI}}{\underbrace{3 \beta (|u|^2(u_0^\cs-u_H^\cs),\psi )_{L^2(\Omega)}}}\\
&\enspace& \quad - \underset{=:\mbox{VII}}{\underbrace{\beta \int_{\Omega} (u -u_H^\cs)^2(u_H^\cs + 2u)\psi \dx}}
\end{eqnarray*}
In the last step we used $(u,\psi)_{L^2(\Omega)}=0$ and
\begin{align*}
a^3 - 3 a b^2 + 2 b^3 = (a-b)^2(a+2b) \quad \mbox{for } a,b\in \mathbb{R}.
\end{align*}
With \eqref{difference-L2-norms-estimate} we have:
\begin{align*}
|\mbox{I}|\lesssim \left| \frac{1-{\| u^\cs_0 \|_{L^2(\Omega)}}}{\| u^\cs_0 \|_{L^2(\Omega)}} \right| ( \lambda^\cs_H +\| u^\cs_H \|^3_{H^1(\Omega)} ) \| \psi \|_{L^2(\Omega)} \lesssim H^4 \lambda^\cs_H (1+ (\lambda^\cs_H)^2) \| \psi \|_{H^1(\Omega)}.
\end{align*}
For II we use Theorem~\ref{main-result-pre} to obtain:
\begin{align*}
|\mbox{II}|\le |\lambda_H^\cs - \lambda| \| u_H^\cs - u \|_{L^2(\Omega)} \| \psi \|_{L^2(\Omega)} \lesssim H^3 H^3 \|\psi\|_{L^2(\Omega)} \le H^6 \|\psi\|_{H^1(\Omega)}.
\end{align*}
For term III we can use equation \eqref{u-c-0-u-c-H-L2-estimate} which gives us
\begin{align*}
|\mbox{III}|\le \lambda |( u_H^\cs - u_0^\cs, \psi - I_H(\psi) )_{L^2(\Omega)}| \lesssim \lambda H^3 \| \psi - I_H(\psi) \|_{L^2(\Omega)} \lesssim H^4 \|\psi \|_{H^1(\Omega)}.
\end{align*}
Using \eqref{L2-estimate-bar-u-c-0-u-c-0} we get
\begin{align*}
|\mbox{IV}|\le \lambda \| u_0^\cs - \tilde{u}_0^\cs\|_{L^2(\Omega)} \| \psi \|_{L^2(\Omega)} \lesssim H^4 \|\psi\|_{H^1(\Omega)}.
\end{align*}
Equally we get
\begin{align*}
|\mbox{V}|\lesssim |(|u|^2(\tilde{u}_0^\cs -u_0^\cs),\psi )_{L^2(\Omega)}| \lesssim \| u \|_{L^6(\Omega)}^2 \|\tilde{u}_0^\cs -u_0^\cs\|_{L^2(\Omega)} \|\psi \|_{L^6(\Omega)} \lesssim \lambda H^4 \|\psi\|_{H^1(\Omega)}.
\end{align*}
To estimate VI we need the $L^{\infty}$-estimate given by \eqref{L-infty-estimate-adjoint-problem} which reads 
\begin{align}
\label{L-infty-psi-used}\|\psi_{u-\tilde{u}_0^\cs}\|_{L^{\infty}(\Omega)} \lesssim \| \tilde{u}_0^\cs - u\|_{L^2(\Omega)}.
\end{align}
For $z \in \mathcal{N}_H$, let the values $u_z$ and $\psi_z$ denote the coefficients appearing in the weighted Cl\'ement interpolation of $u$ and $\psi$ (c.f. equation \eqref{def-weighted-clement}). Recall that $\Phi_z$ denote the nodal basis functions of $V_H$.
Using again \eqref{u-c-0-u-c-H-L2-estimate}, $(\Phi_z,u^\f)_{L^2(\Omega)}=0$ for all $z \in \mathcal{N}_H$, and the fact that $u_0^\cs-u_H^\cs\in \Vfzero$ we obtain
\begin{align*}
|\mbox{VI}|&\lesssim |(|u|^2(u_0^\cs-u_H^\cs),\psi )_{L^2(\Omega)}| \\
&= \left| ((u-I_H(u))u \psi, u_0^\cs-u_H^\cs)_{L^2(\Omega)} + \sum_{z\in\mathcal{N}_H} (u_z(u-u_z) \psi \Phi_z, u_0^\cs-u_H^\cs)_{L^2(\Omega)}  \right.\\
&\enspace \quad \left. + \sum_{z\in\mathcal{N}_H} (|u_z|^2(\psi-\psi_z) \Phi_z, u_0^\cs-u_H^\cs)_{L^2(\Omega)}
 \right|\\
 &\lesssim \| u \|_{L^{\infty}(\Omega)} \left( 2 \| \psi \|_{L^{\infty}(\Omega)}  \| u \|_{H^1(\Omega)} +  \| u \|_{L^{\infty}(\Omega)} \| \psi \|_{H^1(\Omega)} \right) H \|u_0^\cs-u_H^\cs\|_{L^2(\Omega)}\\
 &\overset{\eqref{L-infty-psi-used}}{\lesssim} H^4 \| \tilde{u}_0^\cs - u\|_{L^2(\Omega)}.
\end{align*}
For the last term Theorem~\ref{main-result-pre} leads to
\begin{align*}
|\mbox{VII}|\lesssim \|u - u_H^\cs\|^2_{H^1(\Omega)} \left( \|u_H^\cs\|_{L^2(\Omega)} + 2\|u\|_{L^2(\Omega)} \right) \|\psi\|_{H^1(\Omega)} \lesssim H^4 \|\psi\|_{H^1(\Omega)}.
\end{align*}
Combining the results for the terms I--VII and using $\|\psi\|_{H^1(\Omega)}\lesssim\|\tilde{u}^\cs_0-u\|_{L^2(\Omega)}$ we get
\begin{align*}
|c_{\lambda,u}(\tilde{u}^\cs_0-u,\psi)| \lesssim H^4 \|\tilde{u}^\cs_0-u\|_{L^2(\Omega)}.
\end{align*}
Since (by using the previous estimate for $\| u - \tilde{u}^\cs_0\|_{H^1(\Omega)}$)
\begin{align*}
\frac{1}{4} \| u - \tilde{u}^\cs_0 \|_{L^2(\Omega)}^4 + \| u - \tilde{u}^\cs_0\|_{L^2(\Omega)}^2 \int_{\Omega} |u|^2 u \psi_{u-\tilde{u}^\cs_0} \dx \le C H^3  \| u - \tilde{u}^\cs_0\|_{L^2(\Omega)}^2
\end{align*}
we finally obtain with Lemma~\ref{lemma-L2-identity}
\begin{eqnarray*}
\|u-\tilde{u}_0^\cs\|_{L^{2}(\Omega)}^2  \lesssim |c_{\lambda,u}(\tilde{u}^\cs_0-u,\psi)| \lesssim H^4 \|\tilde{u}^\cs_0-u\|_{L^2(\Omega)}.
\end{eqnarray*}
With \eqref{u-c-0-u-c-H-L2-estimate} we therefore proved
\begin{eqnarray*}
\|u- u_0^\cs\|_{L^{2}(\Omega)}  \lesssim H^4.
\end{eqnarray*}
\end{proof}

\begin{proposition}
The $L^2$-error estimate in the fully-discrete case can be proved analogously to the semi-discrete case above. We therefore get for sufficiently small $h$ that
\begin{align*}
\|u -u^\cs_h\|_{L^2(\Omega)} &\lesssim H^4 + C_{L^2}(h,H),
\end{align*}
with $C_{L^2}(h,H)$ behaving like the term $H^2 \| u - u_h \|_{H^1(\Omega)}$.
\end{proposition}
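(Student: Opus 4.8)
The plan is to reproduce, in the fully-discrete setting, the two-step argument used in the proof of Lemma~\ref{lemma-post-processed-L2-bound}. Two features are genuinely new compared with the semi-discrete case $h=0$: the fine-scale error $\|u-u_h\|_{H^1(\Omega)}$ no longer vanishes and therefore survives in every bound inherited from Lemma~\ref{corollary-properties-eigenvalue-decompose} and Theorem~\ref{main-result-pre}; and the discrete functions $u_h$ and $u_h^\cs$ possess no uniform $L^\infty(\Omega)$-bound, whereas the semi-discrete argument relied crucially on the elliptic regularity bound \eqref{L-infty-bound-u-c-0} for $u_0^\cs$. To handle the latter I would introduce the continuous auxiliary function $\hat{u}^{\cs}\in H^1_0(\Omega)$ defined by $a(\hat{u}^{\cs},\phi)=\lambda_H^\cs(u_H^\cs,\phi)_{L^2(\Omega)}-\beta(|u_H^\cs|^2u_H^\cs,\phi)_{L^2(\Omega)}$ for all $\phi\in H^1_0(\Omega)$. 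Since its right-hand side lies in $L^2(\Omega)$ by the embedding $H^1_0(\Omega)\hookrightarrow L^6(\Omega)$, classical elliptic theory yields the uniform bound $\|\hat{u}^{\cs}\|_{L^\infty(\Omega)}\lesssim\lambda_H^\cs\lesssim\lambda$ exactly as in \eqref{L-infty-bound-u-c-0}. Comparing this defining equation with Problem~\ref{two-grid-postprocessing} shows that $u_h^\cs$ is precisely the $a(\cdot,\cdot)$-Galerkin projection of $\hat{u}^{\cs}$ onto $V_h$, so that $\|\hat{u}^{\cs}-u_h^\cs\|_{H^1(\Omega)}$ and $\|\hat{u}^{\cs}-u_h^\cs\|_{L^2(\Omega)}$ are controlled by standard finite-element theory and enter at most at the order of $C_{L^2}(h,H)$.

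For Step~1, I would show that the normalization defect $\big|\,\|u_H^\cs\|_{L^2(\Omega)}-\|u_h^\cs\|_{L^2(\Omega)}\,\big|$ is of order $H^4+C_{L^2}(h,H)$. As in the semi-discrete case one first verifies, from the Euler--Lagrange equation of Problem~\ref{discrete-problem-in-V-c} tested against $\Vc$ together with Problem~\ref{two-grid-postprocessing}, that $u_h^\cs-u_H^\cs\in\Vf=\kernel(I_H\vert_{V_h})$, which unlocks the Cl\'ement-interpolation gain of \eqref{u-c-0-u-c-H-L2-estimate}; the bounds of Lemma~\ref{corollary-properties-eigenvalue-decompose} now carry the extra summand $\|u-u_h\|_{H^1(\Omega)}$. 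The only delicate point is the cubic term $\beta(|u_h^\cs|^2u_h^\cs,u_h^\cs-u_H^\cs)_{L^2(\Omega)}$: here I would split off $\beta(|\hat{u}^{\cs}|^2\hat{u}^{\cs},u_h^\cs-u_H^\cs)_{L^2(\Omega)}$, estimate it through the $L^2$-orthogonality \eqref{L2-orthogonality} and the Cl\'ement bound using $\|\hat{u}^{\cs}\|_{L^\infty(\Omega)}$ as in \eqref{bound-for-energy-uf3}, and absorb the remainder $\beta(|u_h^\cs|^2u_h^\cs-|\hat{u}^{\cs}|^2\hat{u}^{\cs},u_h^\cs-u_H^\cs)_{L^2(\Omega)}$ via the factorization $a^3-b^3=(a^2+ab+b^2)(a-b)$, H\"older's inequality, and the finite-element bound on $\|\hat{u}^{\cs}-u_h^\cs\|_{H^1(\Omega)}$. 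This justifies passing to the $L^2$-normalized $\tilde{u}_h^\cs:=\|u_h^\cs\|_{L^2(\Omega)}^{-1}u_h^\cs$ as an admissible test function.

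For Step~2, I would apply the error identity of Lemma~\ref{lemma-L2-identity} with $v=\tilde{u}_h^\cs$ and $\psi:=\psi_{u-\tilde{u}_h^\cs}$, expanding $c_{\lambda,u}(\tilde{u}_h^\cs-u,\psi)$ into the same seven contributions I--VII that appear in the proof of Lemma~\ref{lemma-post-processed-L2-bound}. Each term is estimated verbatim, the single difference being that every factor $\|u-u_H^\cs\|_{H^1(\Omega)}$ and $\|u-u_H^\cs\|_{L^2(\Omega)}$ is now bounded through Theorem~\ref{main-result-pre} by $H^2+\|u-u_h\|_{H^1(\Omega)}$ and $H^3+H\|u-u_h\|_{H^1(\Omega)}$, respectively. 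The dominant new contribution is produced by term VII, where the square $\|u-u_H^\cs\|_{H^1(\Omega)}^2$ generates exactly the summand $H^2\|u-u_h\|_{H^1(\Omega)}$ that constitutes the leading part of $C_{L^2}(h,H)$; the smallness assumption on $h$ enters only through the abstract estimate \eqref{abstract-L2-estimate}. The $L^\infty(\Omega)$-bound \eqref{L-infty-estimate-adjoint-problem} for $\psi$ and the Cl\'ement argument for term VI---which exploits $(\Phi_z,u_h^\cs-u_H^\cs)_{L^2(\Omega)}=0$---transfer without change once the $L^\infty$-factor there is taken on $\hat{u}^{\cs}$ rather than on $u_h^\cs$. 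Combining the resulting bound $\|u-\tilde{u}_h^\cs\|_{L^2(\Omega)}\lesssim H^4+C_{L^2}(h,H)$ with the Step~1 normalization estimate then yields the assertion.

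The main obstacle I anticipate is technical rather than conceptual: one must systematically replace each discrete cubic nonlinearity ($|u_h^\cs|^2u_h^\cs$, and wherever $u_h$ would otherwise supply an $L^\infty$-factor, $|u_h|^2u_h$) by its continuous representative ($\hat{u}^{\cs}$, respectively $u$), and verify in every instance that the inserted defect is of genuinely higher order and that the $L^\infty(\Omega)$-bound is transferred onto a function for which elliptic regularity applies. This bookkeeping is what renders the fully-discrete argument considerably more tedious than the semi-discrete one and is precisely the reason the authors present only the case $h=0$ in full detail.
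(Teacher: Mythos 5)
Your proposal is correct and follows essentially the same route as the paper: the authors themselves only assert that the fully-discrete case follows from the semi-discrete proof of Lemma~\ref{lemma-post-processed-L2-bound} ``by inserting additional continuous approximations'' to compensate for the missing uniform $L^{\infty}$-bounds on $u_h$ and $u^\cs_h$, and your auxiliary function $\hat{u}^{\cs}$ is precisely the paper's semi-discrete post-processed solution $u^\cs_0$, of which $u^\cs_h$ is the $a(\cdot,\cdot)$-Galerkin projection onto $V_h$. Your bookkeeping of where the fine-scale error $\| u - u_h \|_{H^1(\Omega)}$ enters (chiefly through term VII via Theorem~\ref{main-result-pre}) matches the stated origin of $C_{L^2}(h,H)$ in Theorem~\ref{main-result-post}.
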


\subsubsection{Proof of the eigenvalue error estimate in \eqref{final-post-processed-L2-estimate}}
 
From the following corollary we can conclude estimate \eqref{final-post-processed-L2-estimate}.
\begin{corollary}
\label{corollary-post-processed-eigvl}Let $u^\cs_h \in V_h$ denote the solution of the post-processing step defined via Problem~\ref{two-grid-postprocessing} and let $\lambda^\cs_h:=(2 E(u^\cs_h) + 2^{-1} \beta \| u^\cs_h \|_{L^4(\Omega)}^4) \|u^\cs_h\|^{-2}_{L^2(\Omega)}$. Then there holds
\begin{align*}
|\lambda_h- \lambda^\cs_h | &\lesssim \| u_h - u_h^\cs \|^2_{H^1(\Omega)} + \| u_h - u_h^\cs \|_{L^2(\Omega)}. 
\end{align*}
\end{corollary}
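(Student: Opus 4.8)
The plan is to view both eigenvalues as values of one and the same nonlinear Rayleigh quotient and to exploit that the reference solution $u_h$ solves the discrete eigenvalue equation on $V_h$. Writing $F(v):=a(v,v)+\beta\|v\|_{L^4(\Omega)}^4$ and $G(v):=\|v\|_{L^2(\Omega)}^2$, the normalization $\|u_h\|_{L^2(\Omega)}=1$ gives $\lambda_h=F(u_h)$, while by definition $\lambda^\cs_h=F(u^\cs_h)/G(u^\cs_h)$; crucially, both arguments lie in $V_h$. Setting $e:=u_h-u^\cs_h\in V_h$, the first step is to derive an \emph{exact} error identity for
$(\lambda_h-\lambda^\cs_h)\,\|u^\cs_h\|_{L^2(\Omega)}^2=\lambda_h\,G(u^\cs_h)-F(u^\cs_h)$
by substituting $u^\cs_h=u_h-e$ and expanding the quadratic form $a(\cdot,\cdot)$ and the quartic $L^4$-term around $u_h$.

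The second step uses that $u_h$ is a minimizer of $E$ over the $L^2$-sphere in $V_h$ and hence satisfies $a(u_h,\phi_h)+\beta(|u_h|^2u_h,\phi_h)_{L^2(\Omega)}=\lambda_h(u_h,\phi_h)_{L^2(\Omega)}$ for all $\phi_h\in V_h$. Testing this identity with $\phi_h=e$ removes the linear contribution $2a(u_h,e)$ from the expansion, and after the cancellation enabled by $\lambda_h=a(u_h,u_h)+\beta\|u_h\|_{L^4(\Omega)}^4$ the whole expression collapses to
\begin{align*}
(\lambda_h-\lambda^\cs_h)\,\|u^\cs_h\|_{L^2(\Omega)}^2
&=\lambda_h\|e\|_{L^2(\Omega)}^2-2\beta(|u_h|^2u_h,e)_{L^2(\Omega)}-a(e,e)\\
&\quad-6\beta(|u_h|^2,e^2)_{L^2(\Omega)}-4\beta(u_h,e^3)_{L^2(\Omega)}-\beta\|e\|_{L^4(\Omega)}^4 .
\end{align*}

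The third step estimates the right-hand side termwise. The terms $\lambda_h\|e\|_{L^2(\Omega)}^2$ and $a(e,e)$ are bounded by $\|e\|_{H^1(\Omega)}^2$, and the mixed, cubic and quartic terms $6\beta(|u_h|^2,e^2)$, $4\beta(u_h,e^3)$, $\beta\|e\|_{L^4(\Omega)}^4$ are of order $\|e\|_{H^1(\Omega)}^2$ (or higher) by Hölder's inequality, the embedding $H^1_0(\Omega)\hookrightarrow L^6(\Omega)$, and the uniform bound \eqref{u-energy} on $\|u_h\|_{H^1(\Omega)}$. The single genuinely first-order term is $2\beta(|u_h|^2u_h,e)_{L^2(\Omega)}$, which I bound by $\|u_h\|_{L^6(\Omega)}^3\|e\|_{L^2(\Omega)}\lesssim\|e\|_{L^2(\Omega)}$ via \eqref{u-energy}; this is exactly where the linear $L^2$-contribution of the claimed bound comes from. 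Dividing by $\|u^\cs_h\|_{L^2(\Omega)}^2$, which stays bounded away from zero because $\|u^\cs_h\|_{L^2(\Omega)}\ge 1-\|e\|_{L^2(\Omega)}$ for sufficiently small errors, yields the assertion.

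The main obstacle is precisely the non-vanishing linear term $2\beta(|u_h|^2u_h,e)$. Unlike in a linear eigenvalue problem, $u_h$ is a critical point of the \emph{constrained} energy but not of the Rayleigh quotient $F/G$, so the first variation does not cancel and a purely quadratic estimate cannot be expected. Isolating this term cleanly and controlling it by $\|e\|_{L^2(\Omega)}$ through the uniform $L^6$-bound on $u_h$ is the crux of the argument; the remaining contributions are routine consequences of Hölder's inequality, the Sobolev embedding, and the stability estimate \eqref{u-energy}.
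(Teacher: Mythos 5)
Your proposal is correct and follows essentially the same route as the paper: both rest on the exact identity obtained by testing the discrete Euler--Lagrange equation for $u_h$ with the error $e=u_h-u_h^\cs$, which cancels the first variation of the quadratic part and leaves a single genuinely first-order remainder from the quartic nonlinearity (your $2\beta(|u_h|^2u_h,e)_{L^2(\Omega)}$, the paper's term $\beta((|u_h|^2-|u_h^\cs|^2),|u_h^\cs|^2)_{L^2(\Omega)}$), bounded by $\|e\|_{L^2(\Omega)}$ via the $L^6$-embedding and \eqref{u-energy}. Only a harmless algebra slip: with $u_h^\cs=u_h-e$ the odd-order terms in your expansion should carry the opposite signs ($+2\beta(|u_h|^2u_h,e)_{L^2(\Omega)}$ and $+4\beta(u_h,e^3)_{L^2(\Omega)}$), which is immaterial since every term is estimated in absolute value.
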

\begin{proof}
We have for arbitrary $v_h \in V_h$:
\begin{eqnarray*}
\lefteqn{a(u_h-v_h,u_h-v_h) + \beta ( |u_h|^2( u_h-v_h ), u_h-v_h )_{L^2(\Omega)} - \lambda_h (u_h-v_h,u_h-v_h)_{L^2(\Omega)}} \\
&=& a(v_h,v_h) - \lambda_h(v_h,v_h) + \beta (|u_h|^2v_h,v_h)_{L^2(\Omega)}.\hspace{150pt}
\end{eqnarray*}
This implies with $v_h = u_h^\cs$
\begin{eqnarray*}
\lefteqn{|\lambda_h^\cs - \lambda_h |}\\
&=& \left| \frac{a(u_h^\cs,u_h^\cs) + \beta (|u_h^\cs|^2 u_h^\cs , u_h^\cs)_{L^2(\Omega)} - \lambda_h \|u_h^\cs\|^2_{L^2(\Omega)} }{\|u_h^\cs\|^2_{L^2(\Omega)}} \right| \\
&=& \left| \frac{ \| u_h-u_h^\cs \|^2_{H^1(\Omega)} + \beta ( |u_h|^2,( u_h-u_h^\cs )^2 )_{L^2(\Omega)} - \lambda_h \|u_h-u_h^\cs\|^2_{L^2(\Omega)}}{\|u_h^\cs\|^2_{L^2(\Omega)}} \right. \\
&\enspace& \qquad \quad \left. +  \frac{\beta ((|u_h|^2-|u_h^\cs|^2),|u_h^\cs|^2)_{L^2(\Omega)} }{\|u_h^\cs\|^2_{L^2(\Omega)}} \right|.
\end{eqnarray*}
The remaining estimate is straight forward using $(a^2-b^2)=(a-b)(a+b)$. Note that the last term is the dominating term.
\end{proof}

We obtain \eqref{final-post-processed-L2-estimate} from Corollary \ref{corollary-post-processed-eigvl} and our previous estimates for $\| u - u_h^\cs \|_{H^1(\Omega)}$ and $\| u - u_h^\cs \|_{L^2(\Omega)}$.

\end{document}